\documentclass[11pt]{article}

\usepackage[margin=1in]{geometry}
\usepackage[utf8]{inputenc}
\usepackage[english]{babel}
\usepackage{amsmath,amssymb,amsthm,mathtools}
\usepackage[hidelinks]{hyperref}
\usepackage[nameinlink,capitalize]{cleveref}
\numberwithin{equation}{section}

\newtheorem{theorem}{Theorem}[section]
\newtheorem{proposition}[theorem]{Proposition}
\newtheorem{lemma}[theorem]{Lemma}
\newtheorem{corollary}[theorem]{Corollary}
\theoremstyle{definition}
\newtheorem{definition}[theorem]{Definition}
\theoremstyle{remark}
\newtheorem{remark}[theorem]{Remark}

\newcommand{\R}{\mathbb{R}}
\newcommand{\Sph}{\mathbb{S}}
\newcommand{\Pcal}{\mathbb{P}}
\newcommand{\diver}{\operatorname{div}}
\newcommand{\curl}{\operatorname{curl}}
\newcommand{\supp}{\operatorname{supp}}
\newcommand{\weakstar}{\xrightharpoonup{~\ast~}}
\newcommand{\1}{\mathbf{1}}
\newcommand{\abs}[1]{\left|#1\right|}
\newcommand{\norm}[1]{\left\|#1\right\|}
\newcommand{\pair}[2]{\left\langle #1,#2\right\rangle}

\title{Self-Similar Solutions of the Two-Dimensional\\
Incompressible Euler Equation from Large Initial Data}
\author{Hyungjun Choi}
\date{\today}

\begin{document}
\maketitle

\begin{abstract}
Let $0<a<1$ and let $u_0$ be a $C^1$, divergence-free, $(-a)$-homogeneous vector field on \mbox{$\R^2\setminus\{0\}$}. We construct a forward-in-time self-similar solution of the two-dimensional incompressible Euler equations,
\[ u(t,x)=t^{-\frac{a}{1+a}} U\left(\frac{x}{t^{{\frac{1}{1+a}}}}\right), \]
with initial datum $u_0$. No smallness or sign assumption is imposed on the initial datum. The construction is a vanishing-dissipation limit of hypodissipative self-similar profiles built directly in the critical vorticity space. The main estimate is a uniform critical Lorentz bound $\norm{\curl U}_{L^{\frac{2}{1+a},\infty}(\R^2)}$.

The resulting Euler solution $u$ belongs to $C([0,\infty);L^2_{\rm loc}(\R^2))$ and has vorticity uniformly bounded in the critical space $L^{\frac{2}{1+a},\infty}(\R^2)$. Its velocity converges strongly to $u_0$ in $L^2_{\rm loc}$, while its vorticity converges weak-star to $\omega_0=\curl u_0$ in $L^{\frac{2}{1+a},\infty}$ as $t\downarrow0$.
\end{abstract}

\section{Introduction}

We study the initial-value problem for the two-dimensional incompressible Euler equations
\begin{equation} \label{eq:euler}
\partial_t u+u\cdot\nabla u+\nabla p=0,\qquad \diver u=0\quad\text{in }\R^2\times(0,\infty).
\end{equation}
Upon introducing the scalar vorticity $\omega=\curl u=\partial_1u_2-\partial_2u_1$, taking the curl of \eqref{eq:euler} gives the transport equation below. In the decaying Lorentz class used in this paper, the velocity is recovered with the indicated Biot--Savart normalization:
\begin{equation} \label{eq:vorticity-introduction}
\partial_t\omega+u\cdot\nabla\omega=0,\qquad u=K*\omega,\qquad K(x)=\frac{1}{2\pi}\frac{x^\perp}{|x|^2},\qquad x^\perp=(-x_2,x_1).
\end{equation}
The absence of vortex stretching is the fundamental distinction between the two- and three-dimensional equations. Global-in-time existence of classical two-dimensional solutions was established in the foundational works of Wolibner and H\"older \cite{Wolibner1933,Holder1933}. In modern continuation theory, the Beale--Kato--Majda criterion \cite{BealeKatoMajda1984} states that a smooth Euler solution can lose regularity only if
\[ \int_0^T\norm{\omega(t)}_{L^\infty}\,dt=\infty. \]
Since \eqref{eq:vorticity-introduction} transports vorticity, $\norm{\omega(t)}_{L^\infty}$ is conserved for smooth two-dimensional solutions, and the local classical solution therefore extends globally. We refer to \cite{MajdaBertozzi2002} for a systematic account of the classical theory.

The low-regularity existence and uniqueness theory is due to Yudovich. For initial vorticity $\omega_0\in L^1 \cap L^\infty(\R^2)$, he constructed a unique global weak solution \cite{Yudovich1963}. Yudovich later enlarged the uniqueness class to certain mildly unbounded vorticities \cite{Yudovich1995}; related borderline Besov-space results were obtained by Vishik \cite{Vishik1999}. Thus, bounded vorticity is not an absolute endpoint, and uniqueness is conjectured to persist for broader classes of unbounded vorticity.

Weak-solution existence extends substantially below the Yudovich class. DiPerna and Majda developed concentration-cancellation methods for regularized two-dimensional Euler flows \cite{DiPernaMajda1987,DiPernaMajda1988}. Their framework yields global weak solutions under broad compactness hypotheses, including the regime $\omega_0\in L^1\cap L^p(\R^2)$ with $p>1$. At the measure level, Delort proved global existence for vorticity that is a bounded Radon measure of distinguished sign and has locally finite energy \cite{Delort1991}; see also Schochet's weak-vorticity formulation and concentration-cancellation treatment \cite{Schochet1995}. Consequently, global weak existence is available for many rough data classes, whereas uniqueness outside the Yudovich regime is one of the central unresolved themes of two-dimensional ideal flow.

Without such vorticity structure, distributional solutions have been shown to be nonunique: compactly supported wild solutions were constructed by Scheffer \cite{Scheffer1993} and Shnirelman \cite{Shnirelman1997}. Convex integration, introduced by De Lellis and Sz\'ekelyhidi, subsequently placed this flexibility in a systematic framework \cite{DeLellisSzekelyhidi2009}. Applications of convex integration to the two-dimensional Euler equations can be found in \cite{GiriRadu2024} and \cite{BrueColomboKumar2026}; in particular, the latter construction, due to Bru\`{e}, Colombo, and Kumar, provides vorticity bounded in $L^\infty_tL^p_x$ for some $p>1$.

The present paper concerns a scale-invariant part of the theory. The Euler equations admit a one-parameter family of space-time scalings. For every fixed $a\in(0,1)$, they are invariant under
\[ u(t,x)\longmapsto u_\lambda(t,x)\triangleq \lambda^a u(\lambda x,\lambda^{1+a}t), \qquad p(t,x)\longmapsto p_\lambda(t,x)\triangleq \lambda^{2a}p(\lambda x,\lambda^{1+a}t). \]
We call a solution self-similar when it is unchanged by this transformation for every $\lambda>0$. Such a flow is completely determined by the single profile $(U,P)=(u,p)|_{t=1}$ and necessarily has the form
\[ u(t,x)=t^{-\frac{a}{1+a}} U\left(\frac{x}{t^{{\frac{1}{1+a}}}}\right), \qquad p(t,x)=t^{-\frac{2a}{1+a}} P\left(\frac{x}{t^{{\frac{1}{1+a}}}}\right). \]
As $t\downarrow0$, a fixed spatial point $x\neq0$ probes the far field of the profile, since $x/t^{1/(1+a)}\to\infty$. The initial condition is therefore encoded as a boundary condition at spatial infinity for $(U,P)$. Scale invariance requires the initial velocity to be $(-a)$-homogeneous:
\[ u_0(x)=|x|^{-a}\overline u_0\left(\frac{x}{|x|}\right), \qquad \diver u_0=0. \]
Thus the self-similar problem asks whether a prescribed angular profile $\overline u_0$ can be connected to a globally defined profile $U$ that solves the stationary equation in similarity variables.

Self-similar solutions have long served as a meeting point between existence theory, singularity formation, and nonuniqueness.

For the three-dimensional Navier--Stokes equations, the analogous spatial homogeneity is $a=1$. Jia and \v{S}ver\'ak constructed forward self-similar solutions for arbitrarily large $(-1)$-homogeneous data \cite{JiaSverak2014}. The corresponding large-data problem for the two-dimensional Navier--Stokes equations is rather different. Although finite-energy solutions are much better behaved in two dimensions, a nonzero $(-1)$-homogeneous velocity has infinite local energy near the origin. Albritton, Guillod, Korobkov, and Ren recently constructed forward self-similar solutions from arbitrary smooth $(-1)$-homogeneous data \cite{AlbrittonGuillodKorobkovRen2026}; an independent construction was given by Gui, Liu, and Xie \cite{GuiLiuXie2026}. Hou and Song subsequently treated the hypodissipative equations with fractional diffusion \cite{HouSong2026}.

For the two-dimensional Euler equations, self-similar solutions may be singular and can display spiral geometry. Elling constructed algebraic spiral solutions with high-frequency rotational symmetry \cite{Elling2013,Elling2016}. Shao, Wei, and Zhang further developed Elling's method for low-frequency symmetry \cite{ShaoWeiZhang2025}, while Choi constructed such spirals without imposing rotational symmetry \cite{Choi2025}. See also \cite{Gomez-SerranoGarcia2026} for the construction of algebraic spiral solutions for the generalized SQG equation. Moreover, Choi and Coiculescu recently constructed and classified homogeneous steady states, which are automatically self-similar \cite{ChoiCoiculescu2026}. With the exception of such special homogeneous configurations, the known existence theories for spiral solutions are organized around radial power-law vortices: the allowed angular dependence is perturbative or symmetry-dominated.

Forward self-similar solutions are likewise closely tied to potential nonuniqueness. Jia and \v{S}ver\'ak proposed that a bifurcation of self-similar Navier--Stokes solutions could produce multiple Leray--Hopf weak solutions with the same initial datum \cite{JiaSverak2015}. Guillod and \v{S}ver\'ak provided numerical evidence for such a bifurcation \cite{GuillodSverak2023}. More recently, Hou, Wang, and Yang announced a computer-assisted proof \cite{HouWangYang2025}, while Ionescu, Jia, and Palasek proposed another scenario with additional symmetry \cite{IonescuJiaPalasek2025}. The aforementioned works on two-dimensional self-similar Navier--Stokes equations also provide numerical evidence of nonuniqueness. We also note that Coiculescu and Palasek proved nonuniqueness at critical regularity using a novel multiscale construction \cite{CoiculescuPalasek2026}. For the two-dimensional Euler equations, Bressan and Shen reported numerically that distinct self-similar profiles can arise from different truncations of the same initial datum \cite{BressanShen2021}. The existence of a large self-similar profile supplies a base state on which instability and bifurcation scenarios can be investigated.

On the other hand, backward self-similar solutions are natural candidates for finite-time blowup, which is another reason self-similarity is central in fluid mechanics. For the three-dimensional Navier--Stokes equations, nontrivial exactly self-similar blowup solutions satisfying natural integrability or local-energy conditions were excluded by Ne\v{c}as, R\r{u}\v{z}i\v{c}ka, and \v{S}ver\'ak \cite{NecasRuzickaSverak1996}, and by Tsai \cite{Tsai1998}. For the three-dimensional Euler equations, Elgindi proved finite-time self-similar singularity formation for $C^{1,\alpha}$ velocities \cite{Elgindi2021}. A related $C^{1,\alpha}$ blowup result with boundary was obtained by Chen and Hou \cite{ChenHou2023}. Motivated by the numerical work of Luo and Hou \cite{LuoHou2014}, Chen and Hou gave a computer-assisted proof of finite-time blowup for smooth data with boundary \cite{ChenHouAnalysis,ChenHouNumerics}. For recent advances in the numerical discovery of profiles, see \cite{BGG-SJL2025} and the references therein. In addition, Constantin, Ignatova, and Vicol ruled out a range of scaling parameters for a potential finite-energy self-similar blowup \cite{ConstantinIgnatovaVicol2026}.

Our main theorem provides self-similar profiles for the two-dimensional Euler equations for every $C^1$, $(-a)$-homogeneous divergence-free datum throughout the range $0<a<1$, with no smallness or sign assumption. The emphasis is threefold. First, in contrast with the previously known spiral constructions, the angular data need not be close to a radial vortex or satisfy a radial dominance condition. Second, the resulting flow is a global-in-time weak solution of the unforced two-dimensional Euler equations without finite total-vorticity control. The critical control is instead $\omega\in L^\infty_tL^{\frac{2}{1+a},\infty}_x$. Third, the construction supplies genuinely large self-similar Euler profiles and thus gives mathematical support to the nonuniqueness program. It does not by itself prove nonuniqueness, but it furnishes the class of large base solutions needed for a future instability or bifurcation analysis.

As a byproduct, we prove the existence of forward self-similar solutions for the two-dimensional $(-\Delta)^{\frac{1+a}{2}}$-hypodissipative Navier--Stokes equations for every $C^1$, $(-a)$-homogeneous divergence-free datum when $0<a<1$. Our proof strategy is completely different from that of Hou and Song \cite{HouSong2026} and establishes smoothness of the profile throughout the full parameter range. See \emph{Remark \ref{rem:hypodiss-comparison}} for further discussion in this direction.

\subsection{Statement of the result}

\begin{definition}\label{def:weak}
Let $u_0\in L^2_{\rm loc}(\R^2)$ be divergence-free. A vector field $u$ is a weak solution of the two-dimensional Euler equations with initial datum $u_0$ if
\[ u\in L^2_{\rm loc}(\R^2\times[0,\infty)),\qquad \diver u=0, \]
and, for every divergence-free
$\varphi\in C_c^\infty(\R^2\times[0,\infty);\R^2)$,
\[ \int_0^\infty\int_{\R^2} \left(u\cdot\partial_t\varphi+ (u\otimes u):\nabla\varphi\right)\,dx\,dt + \int_{\R^2}u_0(x)\cdot\varphi(x,0)\,dx=0. \]
\end{definition}

\begin{theorem}\label{thm:main}
Let $0<a<1$, and suppose that
$\overline u_0\in C^1(\Sph^1;\R^2)$ is such that
\[ u_0(x)=|x|^{-a}\overline u_0\left(\frac{x}{|x|}\right), \qquad \diver u_0=0. \]
Write
\[ \omega_0\triangleq \curl u_0 =|x|^{-1-a} \overline \omega_0\left(\frac{x}{|x|}\right),\qquad p_c\triangleq \frac{2}{1+a},\qquad q_c\triangleq \frac2a. \]
Then there exist a divergence-free profile $U$ and a pressure $P$ with
the following properties.
\begin{enumerate}
\item[(a)] The pair $(U,P)$ solves
\begin{equation} \label{eq:euler-profile}
-\frac{1}{1+a}\bigl(aU+y\cdot\nabla U\bigr)+U\cdot\nabla U+\nabla P=0,\qquad \diver U=0
\end{equation}
in distributions on $\R^2$. Moreover,
\[ U\in L^{q_c,\infty}(\R^2),\qquad \nabla U \in L^{p_c,\infty}(\R^2),\qquad P\in L^{{\frac{1}{a}},\infty}(\R^2). \]
\item[(b)] Define the self-similar velocity, pressure, and vorticity by
\begin{equation} \label{eq:ss-ansatz}
u(t,x)=t^{-\frac{a}{1+a}} U\left(\frac{x}{t^{{\frac{1}{1+a}}}}\right),\quad p(t,x)=t^{-\frac{2a}{1+a}} P\left(\frac{x}{t^{{\frac{1}{1+a}}}}\right), \quad \omega(t,x)=t^{-1} \Omega\left(\frac{x}{t^{{\frac{1}{1+a}}}}\right)
\end{equation}
for $t>0$, where $\Omega=\curl U$. Then $u$ is a weak solution of
\[ \partial_t u+u\cdot\nabla u+\nabla p=0,\qquad \diver u=0, \]
in the weak sense of \cref{def:weak}. Furthermore,
\[ u\in C\bigl([0,\infty);L^2_{\rm loc}(\R^2)\bigr),\qquad u(0)=u_0, \]
and
\[ \omega\in L^\infty\bigl((0,\infty); L^{p_c,\infty}(\R^2)\bigr),\qquad \omega(t)\weakstar\omega_0 \quad\text{in }L^{p_c,\infty}(\R^2) \]
as $t\downarrow0$. In particular, for every $1<r<p_c$,
\[ \omega(t)\rightharpoonup\omega_0 \quad\text{weakly in }L^r_{\rm loc}(\R^2). \]
\end{enumerate}
Here the weak-$L^r$ quasi-norm is defined by
\[ \norm{f}_{L^{r,\infty}} \triangleq \sup_{\lambda>0} \lambda\,\abs{\{x:\abs{f(x)}>\lambda\}}^{\frac{1}{r}}, \quad r \in [1,\infty). \]
\end{theorem}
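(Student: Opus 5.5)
The plan is to follow the strategy announced in the abstract: construct profiles for a hypodissipative regularization, derive bounds that do not depend on the viscosity, and pass to the limit. For $\nu>0$ consider the self-similar profile equation for the $(-\Delta)^{\frac{1+a}{2}}$-dissipative Navier--Stokes system. The exponent $\frac{1+a}{2}$ is chosen so that the dissipation has the same scaling as the Euler equations with homogeneity $-a$. In vorticity form the profile equation reads
\[
\nu(-\Delta)^{\frac{1+a}{2}}\Omega-\frac{1}{1+a}\bigl((1+a)\Omega+y\cdot\nabla\Omega\bigr)+U\cdot\nabla\Omega=0,\qquad U=K*\Omega .
\]
I would write $\Omega=\Omega_0^\nu+\Theta$, where $\Omega_0^\nu$ is the profile at time $1$ of the linear fractional heat evolution of $\omega_0$. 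This is smooth and has exactly the far field $|y|^{-1-a}\overline\omega_0$, and it lies in $L^{p_c,\infty}$. The correction $\Theta$ is then obtained by a Leray--Schauder degree argument in a subcritical weighted space. Two ingredients are needed. Compactness comes from the smoothing of the fractional heat semigroup in similarity variables. The a priori bound, needed uniformly along the homotopy parameter $\sigma\in[0,1]$ multiplying the nonlinearity, is discussed next.

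The key estimate is a bound on $\norm{\Omega}_{L^{p_c,\infty}}$ that holds uniformly in $\nu$ and $\sigma$. My first attempt exploits the fact that, in original variables, $\omega(t)=t^{-1}\Omega(x/t^{1/(1+a)})$ solves a transport--fractional-diffusion equation with divergence-free drift. The norm $L^{p_c,\infty}$ is invariant under this rescaling, and the equation has a maximum principle. Each level set functional $\abs{\{\omega>\lambda\}}$ therefore does not increase along the flow, by a Córdoba--Córdoba type positivity of $(-\Delta)^s$ and the rearrangement invariance of transport. Starting from $\omega_0\in L^{p_c,\infty}$, this gives $\norm{\omega(t)}_{L^{p_c,\infty}}\le C\norm{\omega_0}_{L^{p_c,\infty}}$.

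The obstacle is that the self-similar profile is not a priori generated by the Cauchy problem. I would handle this in one of two ways.
\begin{itemize}
\item Build the degree argument so that the fixed points are exactly the $t=1$ slices of mild solutions starting from $\omega_0$. This holds for the decomposition above, after checking uniqueness of the mild solution for the regularized problem in the subcritical class.
\item Derive the level-set monotonicity directly from the stationary profile equation. One tests against $\1_{\{\Omega>\lambda\}}$ and uses the dilation term, which produces exactly the identity $\frac{d}{d\lambda}$-balance forcing $\lambda^{p_c}\abs{\{\Omega>\lambda\}}\le \sup_\mu \mu^{p_c}\abs{\{\omega_0>\mu\}}$, modulo the far-field contribution.
\end{itemize}
I expect this step to be the main difficulty, in particular controlling boundary terms at infinity, where $\Omega\sim|y|^{-1-a}$.

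Given the uniform bound, the Biot--Savart law and weak-type Hardy--Littlewood--Sobolev give $U^\nu$ bounded in $L^{q_c,\infty}$. Calderón--Zygmund theory in Lorentz spaces gives $\nabla U^\nu$ bounded in $L^{p_c,\infty}$. The pressure satisfies $P=R_iR_j(U_iU_j)$, and this gives $P\in L^{1/a,\infty}$.

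Compactness for passing to the limit comes from the $W^{1,r}_{\rm loc}$ bounds with $r<p_c$ together with Rellich. These give $U^\nu\to U$ strongly in $L^2_{\rm loc}$, since $q_c>2$ and the Sobolev exponent of $W^{1,r}$ exceeds $2$ when $r$ is close to $p_c$. Hence the quadratic term converges, the dissipative term tends to $0$ in distributions, and (a) holds. A small additional point is to make sure the far field of $U$ is still that of $u_0$. I would check this through uniform decay of $\Theta^\nu$ in a weighted norm at infinity, which I would obtain from the profile equation in the far region, where the linear transport part $y\cdot\nabla$ dominates.

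For (b), the distributional identity for $u$ on $t>0$ follows from (a) by a change of variables. Continuity on $(0,\infty)$ in $L^2_{\rm loc}$ is clear. At $t=0$, on a ball $B_R$ we have
\[
\norm{u(t)-u_0}_{L^2(B_R)}^2=t^{\frac{2-2a}{1+a}}\norm{U-u_0}_{L^2(B_{Rt^{-1/(1+a)}})}^2 ,
\]
so it suffices to show $\norm{U-u_0}_{L^2(B_\rho)}=o(\rho^{1-a})$. This follows from the far-field statement $U-u_0\in L^{q,\infty}$ for some $q<q_c$, or from decay at infinity of $U-u_0$. With strong continuity in hand, the initial term in \cref{def:weak} appears after a standard cutoff in time.

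The uniform vorticity bound comes from scale invariance of the $L^{p_c,\infty}$ norm. Weak-star convergence $\omega(t)\weakstar\omega_0$ holds because the sequence is bounded and converges in distributions, $\omega(t)=\curl u(t)\to\curl u_0$. Weak convergence in $L^r_{\rm loc}$ for $r<p_c$ then follows from the embedding $L^{p_c,\infty}(B)\subset L^r(B)$ on bounded sets, plus density of test functions.
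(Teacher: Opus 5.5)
Your outline matches the paper's: hypodissipative profiles, a $\nu$-uniform bound on $\norm{\Omega}_{L^{p_c,\infty}}$, then $W^{1,r}_{\rm loc}$ compactness. Your passage to the limit in part (a) is fine. The gap is that the central estimate is never proved.

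\emph{The mechanism you cite is false.} Under (fractional) diffusion the measures $\abs{\{\omega>\lambda\}}$ are not monotone. For $2\cdot\1_{B_1}$ with zero drift and $\lambda=\frac12$, the superlevel set at small $t>0$ is a ball of radius larger than $1$. C\'ordoba--C\'ordoba only makes convex functionals such as $\int(\omega-\lambda)_+$ monotone.

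\emph{Route (i) needs a uniqueness theorem that is not available.} It requires uniqueness for the regularized nonlinear problem with large critical data. The difference of two solutions is forced by $(K*\delta)\cdot\nabla\omega$, whose coefficient is critical and large. Duhamel/Gr\"onwall arguments therefore give scale-invariant constants proportional to the data and close only under smallness. The paper notes that even the linear problem is not known to be unique in the critical class. What actually transfers the bound is strong attainment of the data, not uniqueness.

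\emph{Route (ii) is the right idea, but it is misstated and its substance is deferred.} Testing with $\1_{\{\Omega>\lambda\}}$ and setting $F(\lambda)=\int(\Omega-\lambda)_+\,dy$ gives $\lambda F'(\lambda)+(p_c-1)F(\lambda)=-\nu\int\1_{\{\Omega>\lambda\}}\Lambda^{1+a}\Omega\,dy\le0$. So $\lambda^{p_c-1}F(\lambda)$ is nonincreasing; this is not the distribution-function bound you wrote. Everything then hinges on the limit $\lambda\downarrow0$. That limit requires $\Theta$ to be subcritical at infinity, and hence a priori bounds on $\Theta$ in your weighted space, uniform along the Leray--Schauder homotopy. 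You supply none of these.

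\emph{How the paper avoids this.} It needs no a priori bound for arbitrary profiles. It applies Kakutani's theorem to $\Xi\mapsto\mathfrak T(\Xi)$ on the compact convex set $\mathcal K$ of vorticities bounded in $L^{p_c,\infty}\cap L^\infty$. Here $\mathfrak T(\Xi)$ consists of self-similar solutions, obtained by averaging dilates, of the \emph{linear} equation with drift built from $K*\Xi$. These solutions inherit the bounds from smooth approximations: $L^1$/$L^\infty$ contraction plus interpolation gives the Lorentz bound, and the Constantin--Vicol nonlinear maximum principle gives $\norm{\vartheta(t)}_{L^\infty}\le C_aG/t$. The resulting bound is linear in the data with a constant depending only on $a$. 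Applying it to $\epsilon^{-1}u_0$ and rescaling therefore gives $\epsilon$-uniformity for free.

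The initial trace of the inviscid limit is a second gap. You rely on $\nu$-uniform weighted decay of $\Theta^\nu$ at infinity, but give no argument for it. The smoothing that could produce such decay degenerates as $\nu\to0$. Also, since $\overline\omega_0$ is merely continuous, you should not expect a uniform rate beyond the critical one.

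The missing observation is that the uniform critical bounds already control the trace. Integrate the weak form of the $\epsilon$-equation in time against a fixed $\psi\in C_c^\infty$. Since $u_\epsilon\otimes u_\epsilon$ and $p_\epsilon$ are bounded in $L^{1/a,\infty}$, and $u_\epsilon$ in $L^{q_c,\infty}$, uniformly in $t$ and $\epsilon$, this gives $\abs{\pair{u_\epsilon(t)-u_0}{\psi}}\le C_\psi t$. The estimate passes to the limit, so $u(t)\to u_0$ in distributions. Strong $L^2_{\rm loc}$ convergence follows because $\{u(t)\}_{0<t\le1}$ is bounded in $W^{1,r}(B_R)$ by scale invariance. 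It is therefore precompact in $L^2(B_R)$, with $u_0$ as the only possible limit point. No far-field analysis of $U$ is required, and the vorticity statements then follow as you describe.
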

\noindent The construction applies to every $C^1$ divergence-free angular datum $\overline u_0$ and requires neither a smallness condition nor a sign condition.

\begin{remark}[The endpoint $a=0$]\label{rem:a-zero}
At $a=0$, one has $q_c=\infty$, and neither the Riesz transforms nor the Biot--Savart normalization are controlled in the required $L^\infty$ endpoint class. In addition, a constant harmonic velocity is not excluded by $L^\infty$. Thus the strict inequality $a>0$ is essential to this construction.
\end{remark}

\begin{remark}[Vorticity equation] \label{rem:transport-range}
For $\frac{1}{2}\leq a<1$, the velocity formulation of Euler in \cref{thm:main} remains valid. However,
\[ \frac1{p_c}+\frac1{q_c}\geq1, \]
so the estimates do not imply $U\Omega\in L^1_{\rm loc}$. On the other hand, in the range $0<a<\frac{1}{2}$, the vorticity equation holds in the sense of distributions.
\end{remark}

\begin{corollary} \label{cor:vorticity}
Let $0<a<\frac{1}{2}$, and let $U,\Omega$ and $u,\omega$ be
the profiles and self-similar fields, respectively, supplied by
\cref{thm:main}. Then
\[ U\Omega\in L^1_{\rm loc}(\R^2), \]
and the profile vorticity satisfies
\begin{equation} \label{eq:euler-vort-profile}
\left(U-\frac{y}{1+a}\right)\cdot\nabla\Omega=\Omega
\end{equation}
in distributions. Equivalently, the physical vorticity
\[ \omega(t,x)=t^{-1} \Omega\left(\frac{x}{t^{{\frac{1}{1+a}}}}\right) \]
satisfies
\[ \partial_t\omega+u\cdot\nabla\omega=0 \]
in distributions on $\R^2\times(0,\infty)$.
\end{corollary}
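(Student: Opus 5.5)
Since $\Omega\in L^{p_c,\infty}$ and $U\in L^{q_c,\infty}$ by \cref{thm:main}(a), I would first get local integrability of the product from Hölder's inequality in Lorentz spaces. When $0<a<\frac12$ we have
\[ \frac1{p_c}+\frac1{q_c}=\frac{1+a}{2}+\frac a2=\frac12+a<1. \]
Hence $U\Omega\in L^{s,\infty}$ with $\frac1s=\frac12+a$, so $s>1$. On a bounded set, $L^{s,\infty}$ embeds into $L^1$, which gives $U\Omega\in L^1_{\rm loc}$.

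Next I would take the curl of \eqref{eq:euler-profile} in distributions. The gradient $\nabla P$ is annihilated, and the linear term satisfies
\[ \curl\bigl(aU+y\cdot\nabla U\bigr)=(1+a)\Omega+y\cdot\nabla\Omega, \]
since commuting $y\cdot\nabla$ with derivatives produces one extra copy of $\Omega$. For the nonlinear term I would use the identity $U\cdot\nabla U=\nabla\frac{|U|^2}{2}+\Omega U^\perp$. Each piece is a legitimate distribution: $|U|^2\in L^{q_c/2,\infty}$ is locally integrable because $q_c/2=1/a>1$, and $\Omega U^\perp\in L^1_{\rm loc}$ by the first step. For divergence-free $U$ one has $\curl(\Omega U^\perp)=\diver(\Omega U)$. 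To justify the identity at this regularity, I would mollify $U_\epsilon=U*\rho_\epsilon$, which remains divergence-free, and pass to the limit. This uses $U_\epsilon\to U$ in $L^2_{\rm loc}$ (since $q_c>2$) and $\Omega_\epsilon U_\epsilon\to\Omega U$ in $L^1_{\rm loc}$. The latter follows from strong convergence of $\Omega_\epsilon$ in $L^r_{\rm loc}$ and of $U_\epsilon$ in $L^{r'}_{\rm loc}$ for some $r<p_c$ with $r'<q_c$, which is possible exactly because $\frac1{p_c}+\frac1{q_c}<1$.

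Combining the pieces gives
\[ -\Omega-\frac{1}{1+a}y\cdot\nabla\Omega+\diver(\Omega U)=0, \]
and since $\diver U=0$ this is $\diver(\Omega U)=U\cdot\nabla\Omega$. This is exactly \eqref{eq:euler-vort-profile}, interpreted as $\diver(\Omega U)-\frac{1}{1+a}\bigl(\diver(y\Omega)-2\Omega\bigr)=\Omega$.

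For the physical statement I would verify the equivalence by a change of variables in the test function. The fields $u,\omega$ in \eqref{eq:ss-ansatz} are built from $U,\Omega$, so I would test $\partial_t\omega+\diver(u\omega)=0$ against $\varphi\in C_c^\infty(\R^2\times(0,\infty))$. On each time slice I would substitute $y=x/t^{1/(1+a)}$. The time derivative of the ansatz then produces $-t^{-2}\bigl(\Omega+\frac1{1+a}y\cdot\nabla\Omega\bigr)$, while the transport term produces $t^{-2}\diver_y(U\Omega)$. The profile identity therefore gives the equation slice by slice, in the integrated sense. Local integrability of $u\omega$ on compact subsets of $\R^2\times(0,\infty)$ follows from the scaling and the uniform Lorentz bounds.

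The main obstacle is the nonlinear curl identity at low regularity, which requires the strong $L^1_{\rm loc}$ convergence of $\Omega_\epsilon U_\epsilon$; this is precisely where the restriction $a<\frac12$ is used.
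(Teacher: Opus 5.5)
Your argument is correct, but it reaches \eqref{eq:euler-vort-profile} by a different route from the paper.

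The paper never takes the curl of the limiting velocity equation. It writes the weak form \eqref{eq:weak-vort-eps} of the hypodissipative vorticity equation for the approximating profiles $(U_{\epsilon_n},\Omega_{\epsilon_n})$ and passes to the limit there. It pairs the weak-star convergence $\Omega_{\epsilon_n}\weakstar\Omega$ in $L^{p_c,\infty}$ with strong convergence of $U_{\epsilon_n}$ in $L^\rho_{\rm loc}$ for some $\rho>p_c'$, hence in $L^{p_c',1}$ on bounded sets. It also discards the $\epsilon_n\Lambda^{1+a}$ term.

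You instead work only with the limit. You take the curl of \eqref{eq:euler-profile} and justify the Lamb-vector identity $\diver(U\otimes U)=\nabla\frac{|U|^2}{2}+\Omega U^\perp$ by mollification. This uses strong--strong convergence of $\Omega*\rho_\delta$ in $L^r_{\rm loc}$ and of $U*\rho_\delta$ in $L^{r'}_{\rm loc}$, with $r<p_c$ and $r'<q_c$. In both proofs $a<\frac12$ enters at the same point: the condition $\frac1{p_c}+\frac1{q_c}<1$ that makes the product $U\Omega$ stable under the relevant convergence.

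Your version is intrinsic. It shows that every distributional solution of the velocity profile equation with $U\in L^{q_c,\infty}$ and $\nabla U\in L^{p_c,\infty}$ satisfies the vorticity equation, regardless of the vanishing-dissipation construction. It needs neither the approximants nor the $L^{p_c',1}$ duality. The paper's version avoids the nonlinear identity and the mollification step, because in the approximating vorticity equation the nonlinearity is already in divergence form $\diver(\Omega_\epsilon U_\epsilon)$.

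Your change-of-variables treatment of the physical equation matches what the paper asserts. Two small points: $\curl(F^\perp)=\diver F$ is purely algebraic and needs no divergence-free hypothesis, and your mollified field $U*\rho_\epsilon$ clashes with the paper's notation $U_\epsilon$ for the hypodissipative profiles.
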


\subsection{Strategy of proof}

Substitution of the self-similar ansatz \eqref{eq:ss-ansatz} into \eqref{eq:euler} gives \eqref{eq:euler-profile}. Writing $\Lambda=(-\Delta)^{1/2}$, we obtain its solution as the inviscid limit of smooth profiles satisfying
\begin{equation} \label{eq:eps-profile}
\epsilon\Lambda^{1+a}U_\epsilon-\frac{1}{1+a}\bigl(aU_\epsilon+y\cdot\nabla U_\epsilon\bigr)+U_\epsilon\cdot\nabla U_\epsilon+\nabla P_\epsilon=0,\qquad \diver U_\epsilon=0.
\end{equation}
In Section 2, we construct a smooth hypodissipative profile from $(-a)$-homogeneous initial data and establish an estimate for $\norm{\Omega_\epsilon}_{L^{p_c,\infty}}$ that is uniform in $\epsilon$. In Sections 3 and 4, we prove the existence of an inviscid limit and show that the limit is a self-similar Euler solution with the same initial data.

\section{The hypodissipative profiles}

\begin{theorem}[Hypodissipative profile]\label{thm:critical-profile}
Let $0<a<1$ and let $v_0$ be a $C^1_{\rm loc}$, divergence-free, $(-a)$-homogeneous vector field on $\R^2\setminus\{0\}$. Put $\omega_0=\curl v_0$. There are smooth profiles $W,Q$ such that $\diver W=0$, $W=K*\Omega$, $\Omega=\curl W$, and
\begin{equation} \label{eq:unit-dissipative-profile}
\Lambda^{1+a}W-\frac1{1+a}\bigl(aW+y\cdot\nabla W\bigr)+W\cdot\nabla W+\nabla Q=0.
\end{equation}
Moreover,
\begin{equation} \label{eq:unit-critical-bound}
\norm{\Omega}_{L^{p_c,\infty}}\leq C_a\norm{\omega_0}_{L^{p_c,\infty}},\qquad \Omega\in L^\infty(\R^2).
\end{equation}
The associated self-similar fields
\[ v(t,x)=t^{- \frac{a}{1+a}} W(t^{-\frac{1}{1+a}} x),\qquad \omega(t,x)=t^{-1}\Omega(t^{-\frac{1}{1+a}}x) \]
solve the hypodissipative equations for $t>0$ and satisfy
\[ \omega(t)\weakstar\omega_0\text{ in }L^{p_c,\infty},\qquad v(t)\weakstar v_0\text{ in }L^{q_c,\infty}\quad\text{as }t\downarrow0. \]
\end{theorem}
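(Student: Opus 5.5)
We construct the profile through the time-dependent hypodissipative problem rather than by solving the elliptic profile equation \eqref{eq:unit-dissipative-profile} directly. The vorticity form of the equation is
\[ \partial_t\omega+v\cdot\nabla\omega+\Lambda^{1+a}\omega=0,\qquad v=K*\omega,\qquad \omega(0)=\omega_0 . \]
It is invariant under $\omega\mapsto\lambda^{1+a}\omega(\lambda x,\lambda^{1+a}t)$, and $\omega_0$ is invariant under this scaling. Hence any solution that is \emph{unique} in a scale-invariant class is automatically self-similar: $\omega(t,x)=t^{-1}\Omega(t^{-1/(1+a)}x)$ with $\Omega=\omega(1)$.

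\textbf{Step 1: smoothed data.} Write $\omega_0=\omega_0^{\rm near}+\omega_0^{\rm far}$, splitting at $|x|=1$, and mollify. The piece $\omega_0^{\rm near}\in L^1$ is large. The piece $\omega_0^{\rm far}$ lies in $L^{p_c,\infty}\cap L^\infty$, but it is not small in the critical norm, so a small-data fixed point is not available.

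\textbf{Step 2: uniform critical bound.} The key estimate is
\[ \norm{\omega(t)}_{L^{p_c,\infty}}\le C_a\norm{\omega_0}_{L^{p_c,\infty}} \]
uniformly in the regularization. Because $v$ is divergence-free, the equation is transport plus fractional diffusion. Córdoba--Córdoba's positivity inequality gives that every $L^r$ norm, $1\le r\le\infty$, is nonincreasing. Better, the solution operator is an $L^1$- and $L^\infty$-contraction that also preserves the distribution-function ordering. One way to see this is to write $\omega=\omega_+-\omega_-$ and use the comparison principle for the linear drift--diffusion equation with the frozen field $v$.

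By real interpolation with the $K$-functional (Marcinkiewicz), each linear solution map $S_v(t)$ is then bounded on $L^{p_c,\infty}$ with a constant depending only on $p_c$. This is the step that gives the uniform bound, with no smallness or sign assumption.

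Global smooth existence for the regularized data follows from the same $L^1\cap L^\infty$ bounds together with standard subcritical/critical hypodissipative regularity (De Giorgi/Caffarelli--Vasseur or Kiselev--Nazarov--Volberg type arguments, since $1+a>1$ is subcritical with respect to $L^\infty$). These results also give $\omega(t)\in L^\infty$ with the parabolic decay rate $\norm{\omega(t)}_{L^\infty}\lesssim t^{-1}$, via Nash-type $L^{p_c,\infty}\to L^\infty$ smoothing for drift--diffusion with divergence-free drift.

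\textbf{Step 3: remove the regularization and prove self-similarity.} Pass to the limit in the regularization using the uniform bound $\norm{\omega}_{L^{p_c,\infty}}\lesssim1$ and the bound $t\norm{\omega(t)}_{L^\infty}\lesssim1$. By interpolation these give $\omega\in L^r$ for $r>p_c$, so $v\in L^{q}$ for large $q$, and Aubin--Lions supplies compactness for $t>0$.

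To obtain self-similarity, uniqueness is needed in the class $\sup_t t^{1-\frac1{p_c}\cdot\frac{2}{1+a}\cdot\frac{p_c}{2}}\dots$; more precisely, in the class of mild solutions with $\sup_t\norm{\omega}_{L^{p_c,\infty}}<\infty$ and $\sup_t t\norm{\omega}_{L^\infty}<\infty$, which is again the Kato/Koch--Tataru-type class. For small $t$, uniqueness in this class should follow from a Gronwall argument on the difference in a negative Sobolev/velocity norm, using the $L^\infty$ smoothing. If uniqueness fails to be available, the fallback is a Leray--Schauder fixed point directly on the profile equation, rewritten as
\[ \Omega=\Omega_{\rm lin}+\int_0^1 e^{-(1-s)\Lambda^{1+a}}\nabla\cdot(v\omega)(s)\,ds, \]
on a scale-invariant space. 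The uniform a priori bound from Step 2 (applied to $\sigma\omega_0$, $\sigma\in[0,1]$) supplies the degree argument, and compactness comes from the smoothing. This is how Jia--\v{S}ver\'ak proceed.

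\textbf{Step 4: regularity and initial data.} Smoothness of $W$, $Q$ follows by bootstrapping the profile equation. The weak-star convergence $\omega(t)\weakstar\omega_0$ follows from the Duhamel formula: the linear part converges, and the nonlinear part is $o(1)$ when tested against $C_c^\infty$, by the bounds above. The convergence $v(t)\weakstar v_0$ then follows from the boundedness of Biot--Savart $L^{p_c,\infty}\to L^{q_c,\infty}$, which uses $a>0$.

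\textbf{Main obstacle.} The crucial point is the Step 2 interpolation: a nonlinear flow must be justified as a linear contraction by freezing the drift, and the uniform-in-time $L^{p_c,\infty}$ control then has to be coupled with the large-data Leray--Schauder compactness.
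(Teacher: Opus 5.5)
Your Step 2 is the paper's key estimate: freeze the divergence-free drift, use that the linear solution operator is an $L^1$- and $L^\infty$-contraction, and interpolate to get the $L^{p_c,\infty}$ bound. You add $t\norm{\omega(t)}_{L^\infty}\lesssim\norm{\omega_0}_{L^{p_c,\infty}}$; the paper gets this from the Constantin--Vicol nonlinear maximum principle rather than from Nash-type smoothing.

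The gap is in producing a \emph{self-similar} solution. Your primary route needs uniqueness in the critical class, and the Gronwall argument you sketch cannot supply it. For a self-similar flow with $t\norm{\omega(t)}_{L^\infty}\lesssim1$, the Lipschitz norm is $\norm{\nabla v(t)}_{L^\infty}\sim t^{-1}$, so its time integral diverges logarithmically at $t=0$. The resulting Gronwall factor $(t/s)^C$, with $C$ proportional to the size of the solution, is beaten by the scaling decay of a difference only for small data. For large data this uniqueness is open, and it is exactly what the nonuniqueness program in the introduction expects to fail.

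Your Leray--Schauder fallback has a separate gap. The degree argument needs the bound for \emph{every} fixed point of the homotopy, i.e.\ every self-similar solution with data $\sigma\omega_0$. Your interpolation bound is proved only for the particular solution your approximation scheme generates. It requires $\omega(t)$ to be the image of the datum under a linear solution operator started at $t=0$. For the actual drift, which is singular at $t=0$, the linear problem is not known to be uniquely solvable in the critical class; the paper says so explicitly.

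Starting instead from a positive time $s$ only gives $\norm{\omega(t)}_{L^{p_c,\infty}}\le C\norm{\omega(s)}_{L^{p_c,\infty}}=C\norm{\Omega}_{L^{p_c,\infty}}$, which is circular. Weak-star lower semicontinuity as $s\downarrow0$ bounds $\norm{\omega_0}_{L^{p_c,\infty}}$ by $\liminf\norm{\omega(s)}_{L^{p_c,\infty}}$, which is the wrong direction. You also leave unspecified the Banach space on which your Duhamel map is compact and continuous.

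The paper avoids both uniqueness and a priori estimates, as follows.
\begin{itemize}
\item For $\Xi$ in the weak-star compact convex set $\mathcal K$, it solves the linear problem with the self-similar drift $b_\Xi$ by approximation, so the bounds hold by construction.
\item It enforces self-similarity by averaging the dilates $\mathcal D_s\vartheta_\Xi$ over $s\in[-N,N]$ and letting $N\to\infty$.
\item It defines $\mathfrak T(\Xi)$ as the set of time-one profiles of self-similar solutions obeying those bounds; this set is convex by linearity.
\item It applies Kakutani's set-valued fixed-point theorem, with closedness of the graph coming from the stability part of the linear drift--diffusion lemma.
\end{itemize}
Your proposal is missing such a device. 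The alternative would be a genuine uniqueness theorem for the frozen-drift linear problem from $t=0$, which would turn your bound into a true a priori estimate, together with the compactness Leray--Schauder requires.
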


\begin{remark}[Comparison between Theorem~\ref{thm:critical-profile} and Hou--Song \cite{HouSong2026}]\label{rem:hypodiss-comparison}
Theorem~\ref{thm:critical-profile} produces a smooth hypodissipative profile throughout the full range $0<a<1$, whereas the smoothness conclusion in the corresponding result of Hou--Song \cite{HouSong2026} is stated for $\frac{1}{3}<a<1$. The present proof works directly with the vorticity in the critical Lorentz space and, crucially for the inviscid limit, gives an estimate that remains uniform after the dissipation parameter is rescaled. Appendix~\ref{app:galerkin} records a complementary velocity-based construction by a Galerkin scheme which simplifies the proof of Hou--Song.
\end{remark}

\begin{lemma}[Auxiliary Biot--Savart estimates]\label{lem:biot-aux}
If $f\in L^{p_c,\infty}(\R^2)\cap L^\infty(\R^2)$, then
\[ \norm{K \ast f}_{L^{q_c,\infty}}\leq C_a\norm{f}_{L^{p_c,\infty}},\qquad \norm{K \ast f}_{L^\infty}\leq C_a\norm{f}_{L^{p_c,\infty}}^{\frac1{1+a}}\norm{f}_{L^\infty}^{\frac{a}{1+a}}. \]
Moreover, for every $r>p_c$,
\[ \norm{f}_{L^r}\leq C_{a,r}\norm{f}_{L^{p_c,\infty}}^{\frac{p_c}{r}}\norm{f}_{L^\infty}^{1-\frac{p_c}{r}}. \]
Consequently, if $0<\eta<1$ and $r>2/(1-\eta)$, then $K \ast f\in C_b^\eta$ and
\[ [K \ast f]_{\dot{C}^\eta}\leq C_{a,\eta,r}\left(\norm{f}_{L^{p_c,\infty}}^{\frac1{1+a}}\norm{f}_{L^\infty}^{\frac{a}{1+a}}+\norm{f}_{L^{p_c,\infty}}^{\frac{p_c}{r}}\norm{f}_{L^\infty}^{1-\frac{p_c}{r}}\right). \]
\end{lemma}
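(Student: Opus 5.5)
The four bounds are all consequences of the pointwise estimate $|K(x)|=\frac{1}{2\pi|x|}$ together with standard Lorentz-space tools; I would prove them in the order stated.

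\textbf{Step 1: the $L^{q_c,\infty}$ bound.} Since $|K*f|\le \frac{1}{2\pi}\,I_1|f|$ with $I_1$ the Riesz potential of order $1$ in $\R^2$, and $\frac1{q_c}=\frac1{p_c}-\frac12$ (because $\frac a2=\frac{1+a}{2}-\frac12$), the first bound follows from the Hardy--Littlewood--Sobolev inequality in Lorentz form, $I_1:L^{p,\infty}\to L^{q,\infty}$. This requires $1<p_c<2$, which holds exactly because $0<a<1$. Alternatively, one can apply the O'Neil convolution inequality directly, using that $K\in L^{2,\infty}$.

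\textbf{Step 2: the $L^\infty$ bound.} Split $K=K\1_{|x|<R}+K\1_{|x|\ge R}$. The near part gives
\[
\|f\|_{L^\infty}\int_{|x|<R}|x|^{-1}\,dx\lesssim R\,\|f\|_{L^\infty}.
\]
For the far part, use the Lorentz Hölder duality $L^{p_c,\infty}\times L^{p_c',1}\to L^1$. With $p_c'=\frac{2}{1-a}$, one computes
\[
\|\,|x|^{-1}\1_{|x|\ge R}\|_{L^{p_c',1}}\lesssim R^{-1+2/p_c'}=R^{-a}.
\]
This uses that the truncated kernel lies in $L^{p_c',1}$, which holds since $p_c'>2$. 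Hence
\[
|K*f|\lesssim R\|f\|_\infty+R^{-a}\|f\|_{L^{p_c,\infty}}.
\]
Optimizing with $R=(\|f\|_{L^{p_c,\infty}}/\|f\|_\infty)^{1/(1+a)}$ gives the stated product $\|f\|_{L^{p_c,\infty}}^{1/(1+a)}\|f\|_{L^\infty}^{a/(1+a)}$.

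\textbf{Step 3: the $L^r$ interpolation.} Write $\mu(\lambda)=|\{|f|>\lambda\}|\le \|f\|_{L^{p_c,\infty}}^{p_c}\lambda^{-p_c}$, which vanishes for $\lambda\ge\|f\|_\infty$. Then
\[
\|f\|_r^r=r\int_0^{\|f\|_\infty}\lambda^{r-1}\mu(\lambda)\,d\lambda\le r\|f\|_{L^{p_c,\infty}}^{p_c}\int_0^{\|f\|_\infty}\lambda^{r-1-p_c}\,d\lambda .
\]
The last integral converges because $r>p_c$, and equals $\|f\|_\infty^{r-p_c}/(r-p_c)$. Taking $r$-th roots gives the claim.

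\textbf{Step 4: the Hölder bound.} This is the only step needing some care. Boundedness of $K*f$ is Step 2. For the seminorm, fix $x\ne z$, set $\delta=|x-z|$, and split the difference $K*f(x)-K*f(z)$:
\begin{itemize}
\item[(i)] On $B(x,2\delta)$, bound each of $K(x-\cdot)$ and $K(z-\cdot)$ separately by Hölder with $f\in L^r$ and $|K|\1_{B(0,3\delta)}\in L^{r'}$. This needs $r'<2$, i.e.\ $r>2$, and gives $\lesssim\|f\|_r\,\delta^{2/r'-1}=\|f\|_r\,\delta^{1-2/r}$.
\item[(ii)] On the complement of $B(x,2\delta)$, use the mean value bound $|K(x-y)-K(z-y)|\lesssim\delta|x-y|^{-2}$ and Hölder with $L^r$. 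The integral $\int_{|w|>2\delta}|w|^{-2r'}\,dw$ converges since $2r'>2$, giving again $\lesssim\|f\|_r\,\delta^{1-2/r}$.
\end{itemize}
Thus $K*f\in\dot C^{1-2/r}$. For small $\delta$ this controls $\dot C^\eta$ whenever $\eta\le 1-2/r$, which is exactly $r\ge 2/(1-\eta)$. For $\delta\ge1$, use $|K*f(x)-K*f(z)|\le2\|K*f\|_\infty$ together with the Step 2 bound. Combining the two regimes gives the sum of the two terms in the statement, after inserting Step 3 for $\|f\|_r$.

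The main obstacle is bookkeeping rather than any single estimate. One must ensure the Hölder argument is genuinely inhomogeneous, using the $L^\infty$ term for large separations, since a single $L^r$ norm cannot give $\dot C^\eta$ for $\eta<1-2/r$ at all scales. One must also check the Lorentz-duality exponent $p_c'>2$ in Step 2.
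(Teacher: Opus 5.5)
Your argument is correct. Steps 1--3 coincide with the paper's proof: the paper invokes the Lorentz Hardy--Littlewood--Sobolev bound of \cref{prop:biot-lorentz} and uses the same split at radius $R$ with $\norm{\abs z^{-1}\1_{\{\abs z\geq R\}}}_{L^{p_c',1}}\lesssim R^{-a}$. It leaves the $L^r$ interpolation implicit, and your layer-cake computation supplies it.

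The difference is in Step 4. The paper writes $\nabla(K*f)$ as a matrix of Riesz transforms, bounds it in $L^r$ by Calder\'on--Zygmund theory, and then applies Morrey's inequality for $r>2$. You instead estimate $K*f(x)-K*f(z)$ directly, using only the size bounds $\abs{K(w)}\lesssim\abs w^{-1}$ and $\abs{\nabla K(w)}\lesssim\abs w^{-2}$. This is the classical proof that the Riesz potential $I_1$ maps $L^r$ into $\dot C^{1-2/r}$ when $r>2$.

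Your route is more elementary and self-contained. It needs no cancellation of the kernel, no singular-integral theory, and no identification of the distributional gradient of $K*f$ with a principal-value operator. It also makes explicit the small/large-scale splitting (using $\norm{K*f}_{L^\infty}$ when $\abs{x-z}\geq1$) that the paper compresses into ``Morrey's inequality gives the stated estimate.''

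The paper's route is shorter because the Calder\'on--Zygmund bound is already proved in the appendix and reused elsewhere: for \eqref{eq:CZ}, for the pressure, and in the bootstrap in Step~4 of \cref{thm:critical-profile}. It also yields $\nabla(K*f)\in L^r$ for every $1<r<\infty$, which your kernel estimate does not give.
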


\begin{proof}
The first estimate is the Lorentz Hardy--Littlewood--Sobolev estimate in \cref{prop:biot-lorentz}. To prove the $L^\infty$ estimate, split the convolution at radius $R$. The inner integral is bounded by $CR\norm{f}_{L^\infty}$, while the outer integral is bounded by
\[ \int_{\abs z\geq R}\frac{\abs{f(x-z)}}{\abs z}\,dz\leq C\norm{f}_{L^{p_c,\infty}}\norm{\abs z^{-1}\1_{\{\abs z\geq R\}}}_{L^{p_c',1}}\leq C\norm{f}_{L^{p_c,\infty}}R^{-a}. \]
Optimizing in $R$ proves the second estimate. Finally, $\nabla (K \ast f)$ is a matrix of Riesz transforms, so its $L^r$ norm is bounded by $C_r\norm f_{L^r}$ for any $r\in (1,\infty)$. Morrey's inequality gives the stated $\dot{C}^\eta$ estimate.
\end{proof}

The following lemma is due to Constantin and Vicol \cite{ConstantinVicol2012}.

\begin{lemma}[Nonlinear maximum principle] \label{lem:fractional-maximum}
Let $0<\sigma<2$, $1<p<\infty$, and let $f\in C^\infty(\R^2)\cap L^{p,\infty}(\R^2)$ tend to zero at infinity. If $M=f(x_*)=\max f>0$, then
\begin{equation} \label{eq:fractional-maximum}
\Lambda^\sigma f(x_*)\geq c_{\sigma,p}\frac{M^{1+\sigma p/2}}{\norm{f}_{L^{p,\infty}}^{\sigma p/2}}.
\end{equation}
The analogous estimate holds for $-f$ at a negative minimum.
\end{lemma}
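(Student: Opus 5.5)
The plan is to follow the Córdoba--Córdoba pointwise identity for $\Lambda^\sigma$ and to control the resulting far-field contribution by the weak-$L^p$ quasi-norm. For $0<\sigma<2$ and smooth $f$ decaying at infinity we have the singular-integral representation
\[
\Lambda^\sigma f(x)=c_\sigma\,\mathrm{P.V.}\!\int_{\R^2}\frac{f(x)-f(x-z)}{\abs z^{2+\sigma}}\,dz,\qquad c_\sigma>0.
\]
At the maximum point $x_*$ the numerator $M-f(x_*-z)$ is nonnegative. The principal value is therefore an honest integral of a nonnegative function, because the second-order Taylor expansion and the vanishing gradient at $x_*$ make it locally integrable. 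For every $R>0$ this gives
\[
\Lambda^\sigma f(x_*)\geq c_\sigma\int_{\abs z\geq R}\frac{M-f(x_*-z)}{\abs z^{2+\sigma}}\,dz .
\]

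Next I restrict further to the set where $f(x_*-z)\leq M/2$, on which the numerator is at least $M/2$. The complement inside $\{\abs z\geq R\}$ is contained in the superlevel set $E=\{y:\abs{f(y)}>M/2\}$, translated by $x_*$. By the definition of the weak quasi-norm,
\[
\abs{E}\leq (2/M)^p\norm{f}_{L^{p,\infty}}^p .
\]
Then
\[
\int_{\abs z\geq R,\ f(x_*-z)\le M/2}\frac{dz}{\abs z^{2+\sigma}}\ \geq\ \int_{R\leq\abs z\leq 2R}\frac{dz}{\abs z^{2+\sigma}}-\frac{\abs{E}}{R^{2+\sigma}}\ \geq\ \frac{c}{R^{\sigma}}-\frac{2^p\norm f_{L^{p,\infty}}^p}{M^pR^{2+\sigma}},
\]
where I use that the annulus contributes $\simeq R^{-\sigma}$ and that the removed set has measure at most $\abs E$ with the kernel at most $R^{-2-\sigma}$ there.

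The final step is the choice of scale. I take $R^2=C_p\norm f_{L^{p,\infty}}^p/M^p$, with $C_p$ large enough that the subtracted term is at most half of the annulus term. This yields
\[
\Lambda^\sigma f(x_*)\geq c\,\frac{M}{R^\sigma}=c_{\sigma,p}\,\frac{M^{1+\sigma p/2}}{\norm f_{L^{p,\infty}}^{\sigma p/2}} ,
\]
which is \eqref{eq:fractional-maximum}. The statement for a negative minimum follows by applying the argument to $-f$; the weak-$L^p$ norm is unchanged.

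The only delicate point is justifying the pointwise singular-integral formula for $\Lambda^\sigma f$ at $x_*$ when $f$ is merely smooth, in $L^{p,\infty}$ and tending to zero at infinity, rather than Schwartz. Near $z=0$ I would use smoothness. At infinity the kernel $\abs z^{-2-\sigma}$ is integrable and $f$ is bounded, since it is continuous and vanishes at infinity. So the integral converges absolutely away from the origin and agrees with the Fourier definition on the class of functions to which the lemma is applied.
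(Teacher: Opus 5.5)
Your proposal is correct and follows essentially the paper's argument. Both proofs bound the measure of $\{f>M/2\}$ by $(2\norm{f}_{L^{p,\infty}}/M)^p$, work at the scale $R\sim(\norm{f}_{L^{p,\infty}}/M)^{p/2}$, and extract $\gtrsim MR^{-\sigma}$ from the region where $M-f\geq M/2$; the only difference is cosmetic: the paper takes $\abs{B_R}=2\abs{E}$ and uses that an outer annulus minimizes the kernel integral over half of $B_R$, while you use the annulus $R\leq\abs z\leq 2R$ and subtract $\abs{E}R^{-2-\sigma}$ after enlarging $C_p$.
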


\begin{proof}
Put $G=\norm f_{L^{p,\infty}}$ and $E=\{z \in \R^2 : f(x_\ast + z)>\frac{M}{2}\}$. Then $\abs E\leq(2G/M)^p$. Choose $R$ so that $\abs{B_R}=2\abs E$. At the maximum,
\[ \Lambda^\sigma f(x_*)=c_\sigma\,\mathrm{p.v.}\!\int_{\R^2}\frac{M-f(x_*+z)}{\abs z^{2+\sigma}}\,dz\geq c_\sigma\int_{B_R\setminus E}\frac{M/2}{\abs z^{2+\sigma}}\,dz. \]
Among subsets of $B_R$ of measure at least $\abs{B_R}/2$, the integral of the radially decreasing function $\abs z^{-2-\sigma}$ is smallest on an outer annulus. Hence the last integral is at least $c_{\sigma,p}MR^{-\sigma}$. Since $R^2\leq C_p(G/M)^p$, this implies \eqref{eq:fractional-maximum}.
\end{proof}

Fix $0<\eta<1$ and $T>0$. Define the drift space
\[ B_T\triangleq\left\{b\in C((0,T];C_b^\eta(\R^2;\R^2)):\ \diver b=0,\ \nabla b(t)\in L^{p_c,\infty}(\R^2)\text{ for }0<t\leq T,\ \norm{b}_{B_T}<\infty\right\}, \]
where
\begin{equation} \label{eq:BT-norm}
\norm{b}_{B_T}\triangleq\max\left\{\sup_{0<t\leq T}t^{\frac{a}{1+a}}\norm{b(t)}_{L^\infty},\ \sup_{0<t\leq T}t^{\frac{a+\eta}{1+a}}[b(t)]_{\dot C^\eta},\ \sup_{0<t\leq T}\norm{\nabla b(t)}_{L^{p_c,\infty}}\right\}.
\end{equation}
Thus membership in $B_T$ permits the scale-invariant growth rates $\norm{b(t)}_{L^\infty}=O(t^{-a/(1+a)})$ and $[b(t)]_{\dot C^\eta}=O(t^{-(a+\eta)/(1+a)})$ as $t\downarrow0$. Both rates are integrable in time, and the last term in \eqref{eq:BT-norm} implies $b\in L^1((0,T);W^{1,s}_{\rm loc})$ for every $1<s<p_c$.

\begin{lemma}[Linear drift--diffusion]\label{lem:linear-drift}
Let $b\in B_T$ and $f\in L^{p_c,\infty}(\R^2)$. Then the following assertions hold.
\begin{enumerate}
\item[(a)] There is a distributional solution of
\begin{equation} \label{eq:linear-drift}
\partial_t\vartheta+b\cdot\nabla\vartheta+\Lambda^{1+a}\vartheta=0,\qquad \vartheta(0)=f,
\end{equation}
in the class
\[ \sup_{0<t\leq T}\norm{\vartheta(t)}_{L^{p_c,\infty}}<\infty,\qquad \sup_{0<t\leq T}t\norm{\vartheta(t)}_{L^\infty}<\infty, \]
where $\vartheta(t)\weakstar f$ in $L^{p_c,\infty}$ as $t\downarrow0$. Moreover, it satisfies the bounds
\begin{equation} \label{eq:linear-critical-bounds}
\norm{\vartheta(t)}_{L^{p_c,\infty}}\leq C_a\norm f_{L^{p_c,\infty}},\qquad \norm{\vartheta(t)}_{L^\infty}\leq C_at^{-1}\norm f_{L^{p_c,\infty}}.
\end{equation}
\item[(b)] Every distributional solution of \eqref{eq:linear-drift} satisfying \eqref{eq:linear-critical-bounds} and $\vartheta(t)\weakstar f$ in $L^{p_c,\infty}$ as $t\downarrow0$ obeys
\begin{equation} \label{eq:linear-trace-rate}
\abs{\pair{\vartheta(t)-f}{\phi}}\leq C_{a,\norm{b}_{B_T},\phi}\norm{f}_{L^{p_c,\infty}}(t + t^{\frac{1}{1+a}}),\qquad \phi\in C_c^\infty(\R^2),\quad 0<t\leq T.
\end{equation}
\item[(c)] Put $S(t)=e^{-t\Lambda^{1+a}}$. For every $0<s\leq t\leq T$, the following identity holds in $\mathcal S'(\R^2)$:
\begin{equation} \label{eq:linear-variation-constants}
\vartheta(t)=S(t-s)\vartheta(s)-\int_s^t\nabla S(t-r)\cdot\bigl(b(r)\vartheta(r)\bigr)\,dr.
\end{equation}
Moreover, for every $0<\tau<T$ and $0<\beta<a$, the solution is bounded in $C^\beta(\R^2)$ on $[\tau,T]$.
\item[(d)] Suppose that $b_n,b\in B_T$, that $b_n$ is uniformly bounded in $B_T$, and that $b_n\to b$ locally uniformly on $(0,T]\times\R^2$. Let $\vartheta_n$ be solutions with the same initial datum $f$ and the common bounds \eqref{eq:linear-critical-bounds}. Then a subsequence converges locally uniformly on $(0,T]\times\R^2$, and weak-star in both $L^{p_c,\infty}$ and $L^\infty$ at each positive time, to a solution with drift $b$, the same initial datum, and the same bounds.
\item[(e)] If $b\in B_T$ for every $T>0$ and $\displaystyle\sup_{T>0}\norm{b}_{B_T}<\infty$, then there is a solution of \eqref{eq:linear-drift} on all of $(0,\infty)$ satisfying the global critical bounds in \eqref{eq:linear-critical-bounds}.
\end{enumerate}
\end{lemma}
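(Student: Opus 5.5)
I will build the solution by regularization and compactness, with the a priori bounds supplied by a level-set/De Giorgi-type argument in the critical Lorentz space. First mollify the datum, $f_k=f*\rho_{1/k}\1_{B_k}$, which lies in $L^1\cap L^\infty$ and satisfies $\norm{f_k}_{L^{p_c,\infty}}\leq C\norm f_{L^{p_c,\infty}}$. Then mollify the drift in space, $b_k(t)=b(t)*\rho_{1/k}$, and cut off near $t=0$, e.g.\ by freezing $b_k$ on $(0,1/k]$. For these regularized problems, classical theory for smooth, bounded, divergence-free drifts gives smooth solutions decaying at infinity. The whole content of (a) is therefore a pair of uniform bounds \eqref{eq:linear-critical-bounds}, depending only on $\norm f_{L^{p_c,\infty}}$.

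\emph{Lorentz bound.} Since $\diver b=0$, the flow is measure preserving, and $\Lambda^{1+a}$ is a positivity-preserving (sub-Markovian) semigroup generator. For every convex $\Phi$ with $\Phi(0)=0$, the Córdoba--Córdoba/Stroock--Varopoulos inequality $\int\Phi'(\vartheta)\Lambda^{1+a}\vartheta\geq0$ gives that $\int\Phi(\vartheta(t))$ is nonincreasing. Applying this to $\Phi(s)=(\abs s-\lambda)_+$ shows that the decreasing rearrangement is controlled in the sense of majorization: $\int_0^\tau\vartheta^*(t)\leq\int_0^\tau f^*$ for all $\tau$. This majorization bound yields the $L^{p_c,\infty}$ estimate with a constant $C_a$ (equivalence of $L^{p,\infty}$ with the $\sup_\tau \tau^{1/p-1}\int_0^\tau f^*$ norm for $p>1$).

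\emph{$L^\infty$ bound.} This is the step I expect to be the main obstacle, because it must not depend on $b$. I would use \cref{lem:fractional-maximum} with $\sigma=1+a$ and $p=p_c$, so that $\sigma p/2=1$. At a positive maximum $M(t)$ of the smooth solution the transport term vanishes, and the (a.e.\ in time) Rademacher argument gives
\[ M'(t)\leq -c\,M^{2}/\norm{\vartheta(t)}_{L^{p_c,\infty}}\leq -c\,M^2/(C\norm f_{L^{p_c,\infty}}). \]
Integrating this ODE gives $M(t)\leq C\norm f_{L^{p_c,\infty}}/t$ independently of $M(0)$ and of $b$; the minimum is treated symmetrically. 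The point to check is that the maximum is attained, which holds since the regularized solutions decay at infinity.

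\emph{Passage to the limit, and parts (c)--(d).} The Duhamel formula \eqref{eq:linear-variation-constants} holds for the regularized problems. On $[\tau,T]$ we have uniform bounds on $b_k\vartheta_k$ in $L^\infty$, of size $\tau^{-1-a/(1+a)}$, together with the Hölder regularity of $b$. Combining these with the smoothing $\norm{\nabla S(t)}_{L^\infty\to C^\beta}\lesssim t^{-(1+\beta)/(1+a)}$, which is integrable for $\beta<a$, gives $C^\beta$ bounds in space. Time equicontinuity follows from the equation. Arzelà--Ascoli then yields local uniform convergence on $(0,T]\times\R^2$, weak-star convergence at each time, and passage to the limit in the distributional formulation, since $b_k\to b$ locally uniformly. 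The same compactness argument with $b_n$ in place of $b_k$ proves (d), and a diagonal argument over $T\to\infty$ using the $T$-uniform bounds proves (e).

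\emph{Initial trace and (b).} For the initial trace, test the equation against $\phi$:
\[ \pair{\vartheta(t)-f}{\phi}=\int_0^t\Bigl(\pair{\vartheta}{b\cdot\nabla\phi}-\pair{\vartheta}{\Lambda^{1+a}\phi}\Bigr)\,ds. \]
The dissipative term is bounded by $\norm{\vartheta}_{L^{p_c,\infty}}\norm{\Lambda^{1+a}\phi}_{L^{p_c',1}}$, giving $O(t)$. For the drift term, use $\norm{\vartheta b}_{L^1(\supp\phi)}\lesssim\norm{\vartheta}_{L^{p_c,\infty}}\,s^{-a/(1+a)}\norm b_{B_T}$, which integrates to $O(t^{1/(1+a)})$. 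This proves \eqref{eq:linear-trace-rate} for any solution with these bounds. Applied to the constructed solution, the same estimate, together with density of $C_c^\infty$ in the predual $L^{p_c',1}$ and the uniform Lorentz bound, gives $\vartheta(t)\weakstar f$.
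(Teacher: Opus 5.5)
Your outline follows the paper: regularize data and drift, then get $\norm{\vartheta(t)}_{L^\infty}\lesssim t^{-1}\norm{f}_{L^{p_c,\infty}}$ from the Constantin--Vicol nonlinear maximum principle with $\sigma p_c/2=1$. Positive-time $C^\beta$ bounds then come from Duhamel's formula and the integrable $t^{-(1+\beta)/(1+a)}$ smoothing, followed by Arzel\`a--Ascoli, and the trace rate comes from testing against $\phi$. Two sub-steps differ, harmlessly. For the Lorentz bound, the paper uses the $L^r$ contraction for every $1\leq r\leq\infty$ and real interpolation of the linear solution map. You instead use monotonicity of $\int(\abs{\vartheta}-\lambda)_+$ together with $\int_0^\tau g^*=\inf_{\lambda\geq0}\{\lambda\tau+\int(\abs{g}-\lambda)_+\}$ to get $\vartheta^{**}\leq f^{**}$. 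This is valid, gives the explicit constant $p_c'$, and avoids interpolation theory. Freezing $b_k$ on $(0,1/k]$, instead of the paper's cutoff $\chi(jt)$ plus space--time mollification, also keeps $b_k$ uniformly in $B_T$, since $t^{a/(1+a)}\leq k^{-a/(1+a)}$ there.

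The genuine omission concerns (c) and (d). You assert \eqref{eq:linear-variation-constants} only for the regularized problems, so your H\"older bounds and time modulus are available only for your smooth approximants and their limits. But (c) is a statement about solutions in the critical class. Part (d) concerns an arbitrary sequence of distributional solutions $\vartheta_n$ with drifts $b_n$, which need not come from your scheme. This is exactly how (d) is used later: in the closed-graph step for $\mathfrak T$, the $\vartheta_n$ are arbitrary self-similar solutions with the common bounds. ``The same compactness argument'' therefore has no input until you show that every distributional solution satisfying \eqref{eq:linear-critical-bounds} obeys Duhamel's formula.

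The paper does this by testing the equation with $S(t-r)\phi$ for $\phi\in\mathcal S(\R^2)$. On $[s,t]$ one has $b\vartheta\in L^\infty$, $\nabla S(t-r)\phi\in L^1$ and $\Lambda^{1+a}S(t-r)\phi\in L^{p_c',1}$. After a spatial cutoff justified by these integrability properties, $r\mapsto\pair{\vartheta(r)}{S(t-r)\phi}$ is absolutely continuous with derivative $\pair{b\vartheta(r)}{\nabla S(t-r)\phi}$. Integrating from $s$ to $t$ gives \eqref{eq:linear-variation-constants}. With this in hand, your $C^\beta$ and equicontinuity estimates apply verbatim to each $\vartheta_n$. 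The initial datum of the limit then follows from the uniform rate in (b), which you do prove for arbitrary solutions in the class.
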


\noindent The locally uniform convergence in $(0,T]\times\R^2$ means uniform convergence in $[\tau, T] \times K$ for any $\tau > 0$ and compact $K \Subset \R^2$.

\begin{proof}
\noindent\emph{Approximation and uniform bounds.}\par
If $f=0$, take $\vartheta\equiv0$. Hence assume $G=\norm f_{L^{p_c,\infty}}>0$. We first construct the smooth drifts used in the approximation. Choose $\chi\in C^\infty(\R;[0,1])$ with $\chi(s)=0$ for $s\leq1$ and $\chi(s)=1$ for $s\geq2$, and extend
\[ b^{(j)}(t,x)\triangleq\begin{cases}\chi(jt)b(t,x),&0<t\leq T,\\0,&t\leq0.\end{cases} \]
Let $\rho\in C_c^\infty((0,1))$ and $\zeta\in C_c^\infty(B_1)$ be nonnegative mollifiers of integral one, put $\rho_\varepsilon(s)=\varepsilon^{-1}\rho(s/\varepsilon)$ and $\zeta_\varepsilon(x)=\varepsilon^{-2}\zeta(x/\varepsilon)$, and choose $0<\varepsilon_j<(4j)^{-1}$ with $\varepsilon_j\downarrow0$. Define, for $0\leq t\leq T$,
\[ b_j(t,x)\triangleq\int_0^{\varepsilon_j}\rho_{\varepsilon_j}(s)\bigl(\zeta_{\varepsilon_j}*b^{(j)}(t-s,\cdot)\bigr)(x)\,ds. \]
If $b\in B_T$ for every $T>0$, use the first branch in the definition of $b^{(j)}$ for all $t>0$ and the same formula for $b_j$ on all $t\geq0$; one sequence then works simultaneously on every finite time interval.
Each $b_j$ is smooth up to $t=0$, divergence free, and vanishes for $0\leq t\leq j^{-1}$. Moreover, it satisfies the uniform bound
\[ \sup_j\norm{b_j}_{B_T}\leq C_{a,\eta}\norm{b}_{B_T}. \]
The cutoff and mollification imply $b_j\to b$ locally uniformly on $(0,T]\times\R^2$. We next choose the data approximation explicitly. Truncate $f$ at heights $\pm j$, multiply it by a smooth cutoff that equals one on $B_j$ and is supported in $B_{2j}$, and convolve the result with a standard mollifier of the length scale $j^{-1}$. This gives $f_j\in C_c^\infty$ such that $f_j\weakstar f$ in $L^{p_c,\infty}$ and $\sup_j\norm{f_j}_{L^{p_c,\infty}}\leq C_aG$. The truncation and multiplication do not increase the distribution function, convolution is bounded on $L^{p_c,\infty}$, and distributional convergence together with the uniform bound gives weak-star convergence by density of $C_c^\infty$ in $L^{p_c',1}$. Let $\vartheta_j$ solve $\partial_t\vartheta_j+b_j\cdot\nabla\vartheta_j+\Lambda^{1+a}\vartheta_j=0$ with $\vartheta_j(0)=f_j$. It is standard that $\norm{\vartheta_j(t)}_{L^r} \leq \norm{f_j}_{L^r}$ for every $1\leq r\leq\infty$ and $t > 0$. The real interpolation of the solution operator gives
\[ \sup_j\sup_{0<t\leq T}\norm{\vartheta_j(t)}_{L^{p_c,\infty}}\leq C_aG. \]
Since the fractional heat semigroup preserves $C_0$ and the drift $b_j$ is bounded on every finite time interval, the classical solution $\vartheta_j$ belongs to $C_0(\R^2)$ at every positive time. Thus the  maximum $M_j(t)=\max_{x\in\R^2} (\vartheta_j)_+(t,x)$ is attained if it is positive. At a maximizing point the drift term vanishes, and \cref{lem:fractional-maximum}, together with the preceding weak-$L^{p_c}$ bound, gives the upper-Dini-derivative inequality
\[ \frac{d}{dt} M_j(t)+c_a\frac{M_j(t)^2}{G}\leq0. \]
Comparison with the ordinary differential equation yields $M_j(t)\leq C_aGt^{-1}$. Applying the same argument to $-\vartheta_j$ gives $\norm{\vartheta_j(t)}_{L^\infty}\leq C_aGt^{-1}$.

\noindent\emph{Duhamel's formula and positive-time regularity.}\par
Let $\vartheta$ be any distributional solution satisfying \eqref{eq:linear-critical-bounds}. On each positive-time strip, $b\vartheta\in L^\infty$. For $0<s<t\leq T$ and $\phi\in\mathcal S(\R^2)$, testing the equation with $S(t-r)\phi$ gives
\[ \frac d{dr}\pair{\vartheta(r)}{S(t-r)\phi}=\pair{b\vartheta(r)}{\nabla S(t-r)\phi}. \]
The right side belongs to $L^1(s,t)$, so the scalar pairing is absolutely continuous. Integration in $r$ proves \eqref{eq:linear-variation-constants}.

We now make the H\"older estimate explicit. Fix $0<\tau<T$ and set
\[ M_{\tau,T}\triangleq\sup_{\tau/2\leq t\leq T}\norm{b(t)\vartheta(t)}_{L^\infty}\leq C_aG\norm{b}_{B_T}\left(\frac{\tau}{2}\right)^{-\frac{1+2a}{1+a}}. \]
The bounds for $b_j$ and $\vartheta_j$ give the same estimate uniformly in $j$.
For $0<t\leq T$, the fractional heat kernel satisfies
\[ \norm{S(t)g}_{C^\beta}\leq C_{a,\beta,T}t^{-\frac{\beta}{1+a}}\norm g_{L^\infty},\qquad \norm{\nabla S(t)g}_{C^\beta}\leq C_{a,\beta,T}t^{-\frac{1+\beta}{1+a}}\norm g_{L^\infty}. \]
Hence, for $0<\beta<a$ and $\tau\leq t\leq T$, \eqref{eq:linear-variation-constants} with $s=\tau/2$ gives
\[ \begin{aligned} \norm{\vartheta(t)}_{C^\beta}&\leq C_{a,\beta,T}(t-\tau/2)^{-\frac{\beta}{1+a}}\norm{\vartheta(\tau/2)}_{L^\infty}+C_{a,\beta,T}M_{\tau,T}\int_{\tau/2}^t(t-r)^{-\frac{1+\beta}{1+a}}\,dr\\&\leq C_{a,\beta,\tau,T}G\bigl(1+\norm{b}_{B_T}\bigr), \end{aligned} \]
because $(1+\beta)/(1+a)<1$. If $0<\beta'<\beta<a$, $0<h\leq1$, and $t,t+h\in[\tau,T]$, the semigroup property and \eqref{eq:linear-variation-constants} give
\[ \vartheta(t+h)-\vartheta(t)=\bigl(S(h)-I\bigr)\vartheta(t)-\int_t^{t+h}\nabla S(t+h-r)\cdot\bigl(b(r)\vartheta(r)\bigr)\,dr. \]
Using $\norm{(S(h)-I)g}_{C^{\beta'}}\leq C h^{(\beta-\beta')/(1+a)}\norm g_{C^\beta}$ and the second heat-kernel estimate with $\beta'$ yields
\[ \norm{\vartheta(t+h)-\vartheta(t)}_{C^{\beta'}}\leq C h^{\frac{\beta-\beta'}{1+a}}\norm{\vartheta(t)}_{C^\beta}+C_{a,\beta'}M_{\tau,T}h^{\frac{a-\beta'}{1+a}}. \]
Thus the approximating solutions are uniformly bounded in $C^\beta$ and equicontinuous in time with values in $C^{\beta'}$ on every positive-time strip. Arzel\`a--Ascoli and a diagonal extraction yield local uniform convergence to a distributional solution $\vartheta$ on $(0,T]\times\R^2$, and the Lorentz bounds pass weak-star to the limit.

\noindent\emph{Initial trace.}\par
For every approximating solution, integration of its weak equation from $0$ to $t$ gives, uniformly in $j$ and for every $\phi\in C_c^\infty(\R^2)$,
\[ \begin{aligned} \abs{\pair{\vartheta_j(t)-f_j}{\phi}}&\leq C_a G\int_0^t\left(\norm{\Lambda^{1+a}\phi}_{L^{p_c',1}}+\norm{b}_{B_T} \tau^{-\frac{a}{1+a}}\norm{\nabla\phi}_{L^{p_c',1}}\right)\,d\tau\\&\leq C_aG\left(t\norm{\Lambda^{1+a}\phi}_{L^{p_c',1}}+(1+a)\norm{b}_{B_T}t^{\frac1{1+a}}\norm{\nabla\phi}_{L^{p_c',1}}\right). \end{aligned} \]
Passing to the locally uniform limit on the compact support of $\phi$ and using $f_j\weakstar f$ gives
\[ \abs{\pair{\vartheta(t)-f}{\phi}}\leq C_aG\left(t\norm{\Lambda^{1+a}\phi}_{L^{p_c',1}}+(1+a)\norm{b}_{B_T}t^{\frac1{1+a}}\norm{\nabla\phi}_{L^{p_c',1}}\right) \leq C_{a,\norm{b}_{B_T},\phi}G(t + t^{\frac1{1+a}}). \]
Moreover, for any $g\in L^{p_c',1}$, the uniform weak-$L^{p_c}$ bound and density of $C_c^\infty$ give
\[ \limsup_{t\downarrow0}\abs{\pair{\vartheta(t)-f}{g}}\leq(C_a+1)G\inf_{\phi\in C_c^\infty}\norm{g-\phi}_{L^{p_c',1}}=0, \]
which proves the initial trace in assertion~(a). If $\vartheta$ is any other solution in the class of assertion~(b), integrate its weak equation from $\delta$ to $t$ and let $\delta\downarrow0$ using its weak-star initial trace; the same calculation proves \eqref{eq:linear-trace-rate}.

\noindent\emph{Stability and the global construction.}\par
Consider a sequence as in assertion~(d). On every compact strip $[\tau,T]\times B_R$, the preceding spatial H\"older bound and time modulus give a uniformly convergent subsequence. A diagonal extraction produces a locally uniform limit $\vartheta$ on $(0,T]\times\R^2$. Since $b_n\to b$ locally uniformly, the linear equations pass to the limit against compactly supported tests, while the uniform estimate \eqref{eq:linear-trace-rate} shows that the limit has initial datum $f$. At each fixed positive time, split a test in either predual, $L^{p_c',1}$ or $L^1$, into a compactly supported part and a tail. Local uniform convergence handles the compact part, and the common $L^{p_c,\infty}$ or $L^\infty$ bound makes the tail uniformly small. Thus the convergence is weak-star in both spaces, proving assertion~(d).

For assertion~(e), use the global approximants constructed above. Their bounds are uniform on every finite interval because $\sup_{T>0}\norm{b}_{B_T}<\infty$. Extract successively on $[1/m,m]\times B_m$ and take a diagonal subsequence. The resulting distributional solution is defined on $(0,\infty)$ and satisfies \eqref{eq:linear-critical-bounds} on every finite interval, which proves assertion~(e).
\end{proof}

\begin{proof}[Proof of Theorem \ref{thm:critical-profile}]
Homogeneity gives $\omega_0(r,\theta)=r^{-(1+a)}\overline\omega_0(\theta)$ and therefore
\[ \abs{\{\abs{\omega_0}>\lambda\}}=\frac12\lambda^{-p_c}\int_{\Sph^1}\abs{\overline\omega_0(\theta)}^{p_c}\,d\theta. \]
Thus $\omega_0\in L^{p_c,\infty}$, and the analogous calculation gives $v_0\in L^{q_c,\infty}$. The flux and circulation of $v_0$ on $\partial B_r$ are $O(r^{1-a})$, so integration by parts on $\R^2\setminus B_r$ and passage to the limit $r\downarrow0$ show that $\diver v_0=0$ and $\curl v_0=\omega_0$ on all of $\R^2$. The field $K*\omega_0$ has the same divergence and curl and belongs to $L^{q_c,\infty}$. Their difference is therefore an entire harmonic vector field in $L^{q_c,\infty}$, and the mean-value estimate forces it to vanish. Hence $v_0=K*\omega_0$. Put $G=\norm{\omega_0}_{L^{p_c,\infty}}$. If $G=0$, then $v_0=0$ and the theorem holds with $W=Q=0$; hence assume $G>0$.

\medskip\noindent\emph{Step 1. Function space.}\par
Use the associate norm
\[ \norm{f}_{(p_c,\infty)}\triangleq\sup_{\norm{g}_{L^{p_c',1}}\leq1}\abs{\int_{\R^2}fg}, \]
which is equivalent to the weak-$L^{p_c}$ quasi-norm. Choose $G_*$ and $A_*$ large enough that the time-one estimates in \eqref{eq:linear-critical-bounds} imply $\norm{\vartheta(1)}_{(p_c,\infty)}\leq G_*$ and $\norm{\vartheta(1)}_{L^\infty}\leq A_*$ whenever the datum is $\omega_0$, and define
\[ \mathcal K\triangleq\{\Xi\in L^{p_c,\infty}\cap L^\infty:\ \norm\Xi_{(p_c,\infty)}\leq G_*,\ \norm\Xi_{L^\infty}\leq A_*\}. \]
Equip $\mathcal K$ with simultaneous weak-star convergence in $L^{p_c,\infty}=(L^{p_c',1})^*$ and $L^\infty=(L^1)^*$. This is a compact metrizable topology on $\mathcal K$: the two preduals are separable, every sequence has a subsequence converging in both weak-star topologies, the two limits agree on $C_c^\infty$, and the norm bounds pass to the common limit. The set $\mathcal K$ is also convex.

\medskip\noindent\emph{Step 2. Iteration scheme.}\par
For $\Xi\in\mathcal K$, set $V_\Xi=K*\Xi$ and
\[ b_\Xi(t,x)\triangleq t^{-\frac{a}{1+a}}V_\Xi(t^{-\frac{1}{1+a}}x). \]
Fix $\eta<\eta_0<1$ and choose $r>\max\{p_c,2/(1-\eta_0)\}$. Interpolation between the two bounds defining $\mathcal K$ gives a uniform $L^r$ bound for $\Xi$, and \cref{lem:biot-aux} then implies, uniformly in $\Xi$,
\[ \norm{V_\Xi}_{L^\infty}+[V_\Xi]_{\dot C^{\eta_0}}+\norm{\nabla V_\Xi}_{L^{p_c,\infty}}\leq C_{a,\eta_0,G_*,A_*}. \]
Since $V_\Xi\in C_b^{\eta_0}\cap L^{q_c,\infty}$, it tends to zero at infinity, and its dilations are continuous in $L^\infty$. Interpolation between $C^0$ and $C^{\eta_0}$ gives continuity in $C^\eta$. Moreover,
\[ t^{\frac{a}{1+a}}\norm{b_\Xi(t)}_{L^\infty}=\norm{V_\Xi}_{L^\infty},\qquad t^{\frac{\eta + a}{1+a}}[b_\Xi(t)]_{\dot C^\eta}=[V_\Xi]_{\dot C^\eta},\qquad \norm{\nabla b_\Xi(t)}_{L^{p_c,\infty}}=\norm{\nabla V_\Xi}_{L^{p_c,\infty}}. \]
Thus $b_\Xi\in B_T$ for every $T>0$, with $\norm{b_\Xi}_{B_T}\leq B_*$ uniformly in $\Xi$ and $T$. Choose a global solution $\vartheta_\Xi$ supplied by Lemma~\ref{lem:linear-drift}(a) and (e):
\begin{equation} \label{eq:self-similar-linear}
\partial_t\vartheta_\Xi+b_\Xi\cdot\nabla\vartheta_\Xi+\Lambda^{1+a}\vartheta_\Xi=0,\qquad \vartheta_\Xi(0)=\omega_0.
\end{equation}
It satisfies \eqref{eq:linear-critical-bounds}, and the integrated weak equation gives, for $\phi\in C_c^\infty$ and $t>0$,
\[ \abs{\pair{\vartheta_\Xi(t)-\omega_0}{\phi}}\leq C_aG\left(t\norm{\Lambda^\alpha\phi}_{L^{p_c',1}}+\alpha B_*t^{1/\alpha}\norm{\nabla\phi}_{L^{p_c',1}}\right). \]

The linear solution is not known to be unique in this critical class, so we impose self-similarity by averaging its scaled copies. For $s\in\R$, define
\[ (\mathcal D_s\vartheta)(t,x)\triangleq e^{(1+a) s}\vartheta(e^{(1+a) s}t,e^sx). \]
The scaling identities for $b_\Xi$ and $\omega_0$ show that $\mathcal D_s\vartheta_\Xi$ solves the same equation. Its critical bounds and trace estimate are independent of $s$: after changing variables in the trace pairing, the test function is $\phi_s(y)=e^{(a-1)s}\phi(e^{-s}y)$, and
\[ \norm{\Lambda^\alpha\phi_s}_{L^{p_c',1}}=e^{-\alpha s}\norm{\Lambda^\alpha\phi}_{L^{p_c',1}},\qquad \norm{\nabla\phi_s}_{L^{p_c',1}}=e^{-s}\norm{\nabla\phi}_{L^{p_c',1}}. \]
For $N>0$, define the average by weak-star integration in $L^{p_c,\infty}=(L^{p_c',1})^*$:
\[ \pair{\vartheta_{\Xi,N}(t)}{g}\triangleq\frac1{2N}\int_{-N}^N\pair{\mathcal D_s\vartheta_\Xi(t)}{g}\,ds,\qquad g\in L^{p_c',1}. \]
The integrand is weak-star measurable, and its uniform critical bound makes the integral a well-defined element of $L^{p_c,\infty}$; the same definition agrees with weak-star integration in $L^\infty=(L^1)^*$ on the intersection of the two preduals. By linearity, $\vartheta_{\Xi,N}$ solves \eqref{eq:self-similar-linear} and satisfies the same bounds and the initial trace estimate. Lemma~\ref{lem:linear-drift}(c) makes these averages uniformly bounded and equicontinuous on every positive-time compact set. Thus, a subsequence converges locally uniformly to a solution $\overline\vartheta_\Xi$. The nonlocal term passes to the limit by pairing with $\Lambda^\alpha\phi\in L^{p_c',1}$ and splitting that function into a compact part and a small Lorentz tail. For fixed $h\in\R$,
\[ \mathcal D_h\vartheta_{\Xi,N}-\vartheta_{\Xi,N}=\frac1{2N}\left(\int_N^{N+h}\mathcal D_s\vartheta_\Xi\,ds-\int_{-N}^{-N+h}\mathcal D_s\vartheta_\Xi\,ds\right), \]
where integrals over reversed intervals have their usual oriented meaning. Consequently, for every $g\in L^{p_c',1}$,
\[ \abs{\pair{\mathcal D_h\vartheta_{\Xi,N}(t)-\vartheta_{\Xi,N}(t)}{g}}\leq C_aG\frac{\abs h}{N}\norm{g}_{L^{p_c',1}}\longrightarrow0. \]
Local uniform convergence of $\vartheta_{\Xi,N}$ on positive-time compact sets also implies local uniform convergence of their fixed dilates $\mathcal D_h\vartheta_{\Xi,N}$. Passing to the limit in the preceding pairing gives $\mathcal D_h\overline\vartheta_\Xi=\overline\vartheta_\Xi$ in distributions, hence pointwise because both sides are continuous. With $\Theta=\overline\vartheta_\Xi(1)$ and $h=-\alpha^{-1}\log t$, this gives
\begin{equation} \label{eq:linear-self-similar}
\overline\vartheta_\Xi(t,x)=t^{-1}\Theta(t^{-1/\alpha}x).
\end{equation}
Let $\mathfrak T(\Xi)$ be the set of the time-one profiles of all self-similar solutions of \eqref{eq:self-similar-linear} with the common bounds and trace estimate above. The construction shows that this set is nonempty. It is contained in $\mathcal K$ and is convex because the equation is linear for fixed $\Xi$.

\medskip\noindent\emph{Step 3. Fixed-point theorem.}\par
We use the following form of the Kakutani fixed-point theorem: suppose $C$ is a compact convex subset of a locally convex space, $F(X)$ is a nonempty compact convex subset of $C$ for each $X\in C$, and the graph of $F$ is closed, then there exists $X\in C$ with $X\in F(X)$.

Since $\mathcal{K}$ is a compact convex subset in a metrizable topology, it suffices to verify the closedness of the graph of $\mathfrak T$ and the compactness of $\mathfrak T(\Xi)$. Suppose $\Xi_n\to\Xi$ in $\mathcal K$. The uniform $L^{p_c,\infty}\cap L^\infty$ bounds give a uniform $L^r$ bound for every $r>p_c$. Taking $r>2$, the Calder\'on--Zygmund and Morrey estimates make $V_{\Xi_n}=K*\Xi_n$ locally precompact in the uniform norm. Any local uniform limit has divergence zero, curl $\Xi$, and the common $L^{q_c,\infty}$ bound; its difference from $K*\Xi$ is therefore an entire harmonic field in $L^{q_c,\infty}$ and must vanish. Thus
\[ V_{\Xi_n}\longrightarrow V_\Xi\quad\text{locally uniformly},\qquad b_{\Xi_n}\longrightarrow b_\Xi\quad\text{locally uniformly}. \]
Now let $\Theta_n\in\mathfrak T(\Xi_n)$ and assume $\Theta_n\to\Theta$ in $\mathcal K$. The corresponding self-similar solutions have a locally uniformly convergent subsequence. The initial trace estimate, self-similar identity, and equality $\vartheta_n(1)=\Theta_n$ pass to the limit, so $\Theta\in\mathfrak T(\Xi)$. Hence the graph is closed. Taking $\Xi_n=\Xi$ shows that each value $\mathfrak T(\Xi)$ is closed in the compact set $\mathcal K$, and therefore compact. All hypotheses of the fixed-point theorem are satisfied, so there is $\Omega\in\mathcal K$ with $\Omega\in\mathfrak T(\Omega)$.

Set $W=K \ast \Omega$ and
\[ \omega(t,x)=t^{-1}\Omega(t^{-\frac{1}{1+a}}x),\qquad v(t,x)=K*\omega(t,x)=t^{-\frac{a}{1+a}}W(t^{-\frac{1}{1+a}}x)=b_\Omega(t,x). \]
Then
\[ \partial_t\omega+v\cdot\nabla\omega+\Lambda^\alpha\omega=0 \]
in distributions on $(0,\infty)\times\R^2$. Substitution of the self-similar formulas is legitimate in distributions because $W$ and $\Omega$ are bounded, and it gives
\begin{equation} \label{eq:unit-vorticity-profile}
\Lambda^{1+a}\Omega+\left(W-\frac{y}{1+a}\right)\cdot\nabla\Omega-\Omega=0.
\end{equation}
The time-one critical estimate gives \eqref{eq:unit-critical-bound}, and the common trace estimate gives $\omega(t)\weakstar\omega_0$ in $L^{p_c,\infty}$ as $t\downarrow0$.

\medskip\noindent\emph{Step 4. Smoothness, pressure, and the velocity trace.}\par
Lemma~\ref{lem:linear-drift}(c) gives $\Omega\in C^\beta$ for every $0<\beta<a$. If $\Omega\in C^s\cap L^{p_c,\infty}\cap L^\infty$, the standard singular-integral estimate for $\nabla W$ and the low-frequency bound supplied by $W\in L^\infty$ give $W\in C^{s+1}$. Hence $W\Omega\in C^s$. At time one, \eqref{eq:linear-variation-constants} reads
\[ \Omega=S({\textstyle\frac{1}{2}})\omega({\textstyle\frac{1}{2}})-\int_{\frac{1}{2}}^1\nabla S(1-\tau)\cdot\bigl(v(\tau)\omega(\tau)\bigr)\,d\tau. \]
On $\tau \in [\frac{1}{2},1]$, self-similarity gives a uniform $C^s$ bound for $v(\tau)\omega(\tau)$. For every $0<\sigma<a$, the fractional heat kernel satisfies
\[ \norm{\nabla S(t)F}_{C^{s+\sigma}}\leq C_{a,s,\sigma}t^{-\frac{1+\sigma}{1+a}}\norm F_{C^s}. \]
Since the exponent $\frac{1+\sigma}{1+a}$ is less than $1$, the integral is finite in $C^{s+\sigma}$. The first term is smooth. Therefore $\Omega\in C^{s+\sigma}$. Repeating the argument while choosing noninteger intermediate exponents reaches arbitrarily large exponents; hence $\Omega\in C^\infty$, and then $W\in C^\infty$ as well.

Define $Q=\sum_{i,j}R_iR_j(W_iW_j)$. Since $W\in L^{q_c,\infty}$ and $q_c/2=1/a$, the weak product estimate and the Calder\'on--Zygmund bound give $Q\in L^{\frac{1}{a},\infty}$. Also, $\Delta Q=-\sum_{i,j}\partial_i\partial_j(W_iW_j) \in C^\infty$ and thus $Q\in C^\infty$. Lemma~\ref{lem:pressure-normalization} with $\epsilon=1$ now gives \eqref{eq:unit-dissipative-profile}. Scaling that equation shows that $(v,q)$, with $q(t,x)=t^{-\frac{2a}{1+a}}Q(t^{-\frac{1}{1+a}}x)$, solves the hypodissipative velocity equation for $t>0$.

Finally, if $g\in L^{q_c',1}$, the Biot--Savart estimate gives $K \ast g\in L^{p_c',1}$, and therefore
\[ \pair{v(t)-v_0}{g}=\pair{K*(\omega(t)-\omega_0)}{g}=\pair{\omega(t)-\omega_0}{-K \ast g}\longrightarrow0. \]
This shows $v(t)\weakstar v_0$ in $L^{q_c,\infty}$.
\end{proof}

\begin{lemma} \label{lem:pressure-normalization}
Let $\epsilon>0$ and suppose that $W$ is smooth, divergence free, $W\in L^{q_c,\infty}$, $\Omega=\curl W$, and
\[ \epsilon\Lambda^{1+a}\Omega+\left(W-\frac{y}{1+a}\right)\cdot\nabla\Omega-\Omega=0. \]
Set $Q=\sum_{i,j}R_iR_j(W_iW_j)$. Then $Q\in L^{\frac{1}{a},\infty}$ and
\begin{equation} \label{eq:normalized-profile}
\epsilon\Lambda^{1+a}W-\frac1{1+a}\bigl(aW+y\cdot\nabla W\bigr)+W\cdot\nabla W+\nabla Q=0
\end{equation}
in distributions.
\end{lemma}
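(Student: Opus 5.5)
The plan is to recover the velocity equation from the vorticity equation through the curl. The idea is to show that the vector field
\[ E\triangleq\epsilon\Lambda^{1+a}W-\frac1{1+a}\bigl(aW+y\cdot\nabla W\bigr)+W\cdot\nabla W+\nabla Q \]
is curl free and divergence free, and that it lies in a Lorentz class with no nonzero harmonic fields. Liouville then forces $E=0$.

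The first step is the pressure bound. Since $W\in L^{q_c,\infty}$, the product $W_iW_j$ lies in $L^{q_c/2,\infty}=L^{1/a,\infty}$. Because $1<1/a<\infty$ for $0<a<1$, the Riesz transforms are bounded there (interpolation of Calder\'on--Zygmund), which gives $Q\in L^{1/a,\infty}$.

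The next step is to compute $\curl E$ in distributions. We have $\curl\Lambda^{1+a}W=\Lambda^{1+a}\Omega$ and $\curl\nabla Q=0$. The standard two-dimensional identity $\curl(W\cdot\nabla W)=W\cdot\nabla\Omega$ holds when $\diver W=0$. For the scaling term, $\curl(y\cdot\nabla W)=y\cdot\nabla\Omega+\Omega$, so
\[ \curl\Bigl(-\tfrac1{1+a}(aW+y\cdot\nabla W)\Bigr)=-\tfrac{1}{1+a}y\cdot\nabla\Omega-\Omega. \]
Hence $\curl E$ equals exactly the left side of the hypothesis, which vanishes.

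For the divergence, the fractional and scaling terms are divergence free: $\diver(y\cdot\nabla W)=y\cdot\nabla\diver W+\diver W=0$. We also have $\diver(W\cdot\nabla W)=\partial_i\partial_j(W_iW_j)$. With the convention $R_j=\partial_j\Lambda^{-1}$, one gets $R_iR_j=-\partial_i\partial_j(-\Delta)^{-1}$, so $\Delta Q=-\partial_i\partial_j(W_iW_j)$. This must be checked in distributions, which works by duality on Schwartz tests whose Fourier transform vanishes near $0$. Therefore $\diver E=0$, and $E$ is harmonic, in the first instance modulo polynomials on the Fourier side.

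The main obstacle is placing $E$ in a decaying class, because $y\cdot\nabla W$ and $\epsilon\Lambda^{1+a}W$ are not obviously integrable at infinity. My first attempt is to avoid pointwise bounds and argue on the Fourier side. Each term of $E$ is a tempered distribution:
\begin{itemize}
\item $W$ and $Q$ lie in weak Lebesgue spaces;
\item $y\cdot\nabla W=\nabla\cdot(y\otimes W)-2W$ is tempered;
\item $W\cdot\nabla W=\nabla\cdot(W\otimes W)$ is tempered;
\item $\Lambda^{1+a}W$ is defined via $\widehat W$, which is locally integrable away from the origin.
\end{itemize}
Since $\curl E=\diver E=0$, we get $\Delta E=0$, so $\widehat E$ is supported at the origin and $E$ is a polynomial. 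To exclude a nonzero polynomial, I would pair $E$ with rescaled test functions $\phi_R(y)=R^{-2}\phi(y/R)$, where $\phi$ has mean one, and show the pairing tends to zero as $R\to\infty$. Each term is controlled using the Lorentz bounds:
\begin{itemize}
\item $W$ and $Q$ give pairings of order $R^{-a}$ and $R^{-2a}$;
\item $y\cdot\nabla W$, after integrating by parts onto $\phi_R$, is also $O(R^{-a})$, since $y\cdot\nabla\phi_R$ has the same scaling as $\phi_R$;
\item the nonlinear term is $O(R^{-1-2a})$;
\item $\Lambda^{1+a}W$ gives $\langle W,\Lambda^{1+a}\phi_R\rangle=O(R^{-a-1-a})$, using that $\Lambda^{1+a}\phi$ lies in $L^{q_c',1}$ by its $|y|^{-3-a}$ decay.
\end{itemize}
Applying this to every component, and to derivatives, shows that all polynomial coefficients vanish. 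Hence $E=0$, which is \eqref{eq:normalized-profile}.
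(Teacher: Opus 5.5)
Your proposal is correct and follows essentially the paper's route: the residual is harmonic, and a Liouville argument forces it to vanish, using Lorentz--H\"older bounds on its pairings with dilated test functions. With your $R^{-2}$ normalization each term is $o(1)$, which matches the paper's $O(R^{2-a})=o(R^2)$; you verify $\curl E=\diver E=0$ explicitly and close via ``tempered and harmonic implies polynomial'' rather than the mean-value property. One minor repair: define $\Lambda^{1+a}W$ by duality, $\phi\mapsto\pair{W}{\Lambda^{1+a}\phi}$ with $\Lambda^{1+a}\phi\in L^{q_c',1}$, rather than through local integrability of $\widehat W$, which is not guaranteed for $W\in L^{q_c,\infty}$ with $q_c>2$.
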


\begin{proof}
Let $F$ be the left side of \eqref{eq:normalized-profile}, then $F$ is harmonic. For radial $\Phi\in C_c^\infty(\R^2;\R^2)$ put $\Phi_{x_0, R}(y)=\Phi((y-x_0)/R)$. Then, by integrating in parts,
\[ \abs{\pair{W}{\Phi_{x_0, R}}}+\abs{\pair{y\cdot\nabla W}{\Phi_{x_0, R}}}\leq C_\Phi\norm W_{L^{q_c,\infty}}R^{2-a}. \]
Furthermore,
\[ \abs{\pair{\Lambda^{1+a}W}{\Phi_{x_0, R}}}+\abs{\pair{\diver(W\otimes W)+\nabla Q}{\Phi_{x_0, R}}}\leq C_{\Phi,\epsilon,W}R^{1-2a}. \]
Thus $\abs{\pair{F}{\Phi_{x_0, R}}}=O(R^{2-a})=o(R^2)$. By the mean value theorem for harmonic functions, letting $R \to \infty$ gives $F \equiv 0$.
\end{proof}

\subsection{The approximating profiles and uniform estimates}

\begin{proposition}\label{prop:eps-family}
For every $\epsilon\in(0,1]$, there is a smooth solution $(U_\epsilon,P_\epsilon)$ of \eqref{eq:eps-profile}. If $\Omega_\epsilon=\curl U_\epsilon$, then $U_\epsilon=K*\Omega_\epsilon$, and the physical self-similar solution associated with this profile has weak-star initial trace $u_0$ in $L^{q_c,\infty}$ and vorticity trace $\omega_0$ in $L^{p_c,\infty}$.
\end{proposition}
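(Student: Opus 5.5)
The plan is to reduce \cref{prop:eps-family} to \cref{thm:critical-profile} by a scaling argument that converts the dissipation coefficient $\epsilon$ into the unit coefficient.

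\emph{Choice of rescaling.} Given $\epsilon\in(0,1]$, look for $U_\epsilon$ of the form
\[ U_\epsilon(y)=\lambda^{a}W(\lambda y),\qquad P_\epsilon(y)=\lambda^{2a}Q(\lambda y). \]
The choice of $\lambda$ comes from counting homogeneities:
\begin{itemize}
\item The drift terms $-\frac1{1+a}(aU+y\cdot\nabla U)$ scale like $\lambda^{a}$.
\item The nonlinear and pressure terms scale like $\lambda^{2a+1}$.
\item The term $\epsilon\Lambda^{1+a}U$ scales like $\epsilon\lambda^{a+1+a}$.
\end{itemize}
This does not balance all three terms at once, so a different rescaling is needed, one that also moves the data.

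\emph{Rescaling the datum instead.} Take $U_\epsilon(y)=\mu W(\nu y)$. The requirement that the drift term stays invariant, with coefficient $1$ in front of $y\cdot\nabla$, forces no constraint on $\nu$. Matching the three terms then gives:
\begin{itemize}
\item the nonlinear term $\mu^2\nu$ matched with the drift term $\mu$, so $\mu\nu=1$;
\item the dissipation $\epsilon\mu\nu^{1+a}$ matched with $\mu$, so $\nu=\epsilon^{-1/(1+a)}$ and $\mu=\epsilon^{1/(1+a)}$.
\end{itemize}
So if $(W,Q)$ solves \eqref{eq:unit-dissipative-profile} with datum $v_0$, then $U_\epsilon(y)=\epsilon^{1/(1+a)}W(\epsilon^{-1/(1+a)}y)$ and $P_\epsilon(y)=\epsilon^{2/(1+a)}Q(\epsilon^{-1/(1+a)}y)$ solve \eqref{eq:eps-profile}.

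\emph{Choice of datum.} The far field of $U_\epsilon$ must be $u_0$. Homogeneity gives $\mu\,v_0(\nu y)=\epsilon^{(1+a)/(1+a)}v_0(y)=\epsilon\, v_0(y)$, so we take $v_0=\epsilon^{-1}u_0$. This datum is still $C^1$, divergence free and $(-a)$-homogeneous, so \cref{thm:critical-profile} applies to it and produces smooth $(W,Q)$ with $W=K*\Omega$.

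\emph{Verification of the remaining claims.}
\begin{itemize}
\item \emph{Biot--Savart relation.} $U_\epsilon=K*\Omega_\epsilon$ follows because both the Biot--Savart law and $\curl$ commute with this dilation, $K$ being $(-1)$-homogeneous.
\item \emph{Physical solutions.} The physical solution of the $\epsilon$-problem, $u_\epsilon(t,x)=t^{-a/(1+a)}U_\epsilon(t^{-1/(1+a)}x)$, equals $\epsilon\, v(t/\epsilon,x)$, where $v$ is the physical solution from \cref{thm:critical-profile}. This is the standard conjugation of $\partial_t u+\dots+\epsilon\Lambda^{1+a}u$ to the unit equation by $t\mapsto t/\epsilon$, $u\mapsto u/\epsilon$.
\item \emph{Traces.} As $t\downarrow0$ we have $t/\epsilon\downarrow0$, so the weak-star traces from \cref{thm:critical-profile} transfer: $u_\epsilon(t)\weakstar\epsilon v_0=u_0$ in $L^{q_c,\infty}$, and $\omega_\epsilon(t)=\epsilon^{-1}\cdot\epsilon\,\omega(t/\epsilon)$ (up to the same factor) tends weak-star to $\omega_0$ in $L^{p_c,\infty}$.
\item \emph{Smoothness.} Smoothness of $(U_\epsilon,P_\epsilon)$ is inherited from $(W,Q)$.
\end{itemize}

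\emph{Main obstacle.} The only delicate point is keeping the exponents consistent; the argument itself is a direct dilation. One must check that
\begin{itemize}
\item the scaled time variable and the self-similar ansatz are compatible, i.e.\ $t^{-a/(1+a)}$ against $\epsilon$ powers;
\item the pressure scaling matches \eqref{eq:eps-profile} in distributions. This can alternatively be rechecked via \cref{lem:pressure-normalization} with general $\epsilon$, setting $P_\epsilon=\sum R_iR_j(U_{\epsilon,i}U_{\epsilon,j})$.
\end{itemize}
Note that the resulting bound $\norm{\Omega_\epsilon}_{L^{p_c,\infty}}$ equals $\norm{\Omega}_{L^{p_c,\infty}}\leq C_a\epsilon^{-1}\norm{\omega_0}$ times the Jacobian factor, which cancels to give $C_a\norm{\omega_0}_{L^{p_c,\infty}}$ by scale invariance of the critical norm. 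This uniformity is what is needed later.
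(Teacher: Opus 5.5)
Your argument is the paper's: the same dilation $U_\epsilon(y)=\epsilon^{1/(1+a)}W(\epsilon^{-1/(1+a)}y)$, $P_\epsilon(y)=\epsilon^{2/(1+a)}Q(\epsilon^{-1/(1+a)}y)$ of the unit-dissipation profile with datum $\epsilon^{-1}u_0$, followed by transfer of the traces. One slip needs fixing: the time conjugation is $u_\epsilon(t,x)=\epsilon\, v(\epsilon t,x)$, not $\epsilon\, v(t/\epsilon,x)$ (matching $\partial_t u$ against $u\cdot\nabla u$ forces this, as does substituting the self-similar form). Since $\epsilon t\downarrow0$ as $t\downarrow0$, your conclusions $u_\epsilon(t)\weakstar\epsilon\cdot\epsilon^{-1}u_0=u_0$ and $\omega_\epsilon(t)=\epsilon\,\omega(\epsilon t)\weakstar\omega_0$ are unaffected.
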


\begin{proof}
Put $\ell_\epsilon=\epsilon^{1/(1+a)}$ and apply \cref{thm:critical-profile} to $v_{0,\epsilon}=\epsilon^{-1}u_0$. If $W_\epsilon,Q_\epsilon$ are the resulting unit-dissipation profiles, define
\[ U_\epsilon(y)=\ell_\epsilon W_\epsilon(y/\ell_\epsilon),\qquad P_\epsilon(y)=\ell_\epsilon^2Q_\epsilon(y/\ell_\epsilon). \]
Since $\ell_\epsilon^{1+a}=\epsilon$, direct scaling gives \eqref{eq:eps-profile}; it also preserves the Biot--Savart identity. If $v_\epsilon$ denotes the unit-dissipation physical solution, then the coefficient-$\epsilon$ solution is $u_\epsilon(t,x)=\epsilon v_\epsilon(\epsilon t,x)$. Its weak-star trace is therefore $\epsilon v_{0,\epsilon}=u_0$, and the same calculation for curls gives the vorticity trace.
\end{proof}

\begin{proposition} \label{prop:vort-bound}
There is a constant depending only on $a$ and $u_0$ such that, for every $\epsilon\in(0,1]$,
\begin{equation} \label{eq:vort-bound}
\norm{\Omega_\epsilon}_{L^{p_c,\infty}}\leq C(a)\norm{\omega_0}_{L^{p_c,\infty}}.
\end{equation}
Moreover,
\begin{equation} \label{eq:velocity-bounds}
\norm{U_\epsilon}_{L^{q_c,\infty}}+\norm{\nabla U_\epsilon}_{L^{p_c,\infty}}\leq C(a,u_0),
\end{equation}
and
\begin{equation} \label{eq:pressure-bound}
\norm{P_\epsilon}_{L^{\frac{1}{a},\infty}}\leq C(a,u_0).
\end{equation}
\end{proposition}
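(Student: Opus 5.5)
The bound \eqref{eq:vort-bound} comes from the scale invariance of the weak-$L^{p_c}$ quasi-norm under the rescaling in the proof of \cref{prop:eps-family}, together with the $\epsilon$-independence of the constant in \cref{thm:critical-profile}. Put $\ell_\epsilon=\epsilon^{1/(1+a)}$. Since $U_\epsilon(y)=\ell_\epsilon W_\epsilon(y/\ell_\epsilon)$, we have $\Omega_\epsilon(y)=\Omega^{W_\epsilon}(y/\ell_\epsilon)$, where $\Omega^{W_\epsilon}=\curl W_\epsilon$. A dilation $f\mapsto f(\cdot/\ell)$ multiplies the $L^{p_c,\infty}$ quasi-norm by $\ell^{2/p_c}=\ell^{1+a}$. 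Hence
\[ \norm{\Omega_\epsilon}_{L^{p_c,\infty}}=\ell_\epsilon^{1+a}\norm{\Omega^{W_\epsilon}}_{L^{p_c,\infty}}=\epsilon\norm{\Omega^{W_\epsilon}}_{L^{p_c,\infty}}. \]
\Cref{thm:critical-profile}, applied to the datum $v_{0,\epsilon}=\epsilon^{-1}u_0$, gives
\[ \norm{\Omega^{W_\epsilon}}_{L^{p_c,\infty}}\leq C_a\norm{\curl v_{0,\epsilon}}_{L^{p_c,\infty}}=C_a\epsilon^{-1}\norm{\omega_0}_{L^{p_c,\infty}}. \]
The factors of $\epsilon$ cancel, which proves \eqref{eq:vort-bound} with $C(a)=C_a$.

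The one point to check is that the constant $C_a$ in \eqref{eq:unit-critical-bound} is independent of the datum's size. The proof of \cref{thm:critical-profile} is linear in the data at the level of bounds. \Cref{lem:linear-drift}(a) gives $\norm{\vartheta(t)}_{L^{p_c,\infty}}\leq C_a\norm f_{L^{p_c,\infty}}$ for any drift in $B_T$, and its constant depends on neither $\norm{b}_{B_T}$ nor $\norm f$. Indeed, it comes from the $L^r$ contraction and real interpolation, and the drift only enters the trace and Hölder estimates. So $G_*$ can be taken as a fixed multiple of $\norm{\omega_0}$, and the fixed point inherits $\norm{\Omega}\leq C_a\norm{\omega_0}$. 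This is where the whole argument relies on the critical-space estimate being uniform. The remaining constants, namely $A_*$ and the drift bound $B_*$, may blow up as $\epsilon\to0$, but they do not enter the weak-$L^{p_c}$ bound. I expect this verification to be the only delicate point.

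The velocity bound \eqref{eq:velocity-bounds} follows from $U_\epsilon=K*\Omega_\epsilon$ and the Lorentz Hardy--Littlewood--Sobolev estimate (\cref{prop:biot-lorentz}, used in the first estimate of \cref{lem:biot-aux}):
\[ \norm{U_\epsilon}_{L^{q_c,\infty}}\leq C_a\norm{\Omega_\epsilon}_{L^{p_c,\infty}}. \]
The gradient $\nabla U_\epsilon$ is a matrix of Riesz transforms of $\Omega_\epsilon$, and Riesz transforms are bounded on $L^{p_c,\infty}$ by interpolation since $1<p_c<2$. This gives $\norm{\nabla U_\epsilon}_{L^{p_c,\infty}}\leq C_a\norm{\Omega_\epsilon}_{L^{p_c,\infty}}$.

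For \eqref{eq:pressure-bound}, the scaling $P_\epsilon(y)=\ell_\epsilon^2Q_\epsilon(y/\ell_\epsilon)$ with $Q_\epsilon=\sum R_iR_j(W_{\epsilon,i}W_{\epsilon,j})$ gives $P_\epsilon=\sum_{i,j}R_iR_j(U_{\epsilon,i}U_{\epsilon,j})$, because Riesz transforms commute with dilations. The weak Hölder inequality gives
\[ \norm{U_{\epsilon,i}U_{\epsilon,j}}_{L^{1/a,\infty}}\leq C\norm{U_\epsilon}_{L^{q_c,\infty}}^2, \]
and boundedness of $R_iR_j$ on $L^{1/a,\infty}$, valid since $1<1/a<\infty$, finishes the estimate with a constant depending only on $a$ and $\norm{\omega_0}_{L^{p_c,\infty}}$, hence on $a$ and $u_0$.
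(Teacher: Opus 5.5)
Your proposal is correct and follows the paper's proof: the dilation factor $\ell_\epsilon^{2/p_c}=\epsilon$ cancels the $\epsilon^{-1}$ from the rescaled datum $\epsilon^{-1}u_0$ in \eqref{eq:unit-critical-bound}. The velocity bounds then come from Hardy--Littlewood--Sobolev and Calder\'on--Zygmund applied to $U_\epsilon=K*\Omega_\epsilon$, and the pressure bound from the weak product estimate plus boundedness of $R_iR_j$ on $L^{1/a,\infty}$ after identifying $P_\epsilon=\sum_{i,j}R_iR_j(U_{\epsilon,i}U_{\epsilon,j})$ by scaling. Your extra check that $C_a$ is independent of the size of the datum is one the paper uses only implicitly, and it is right: the weak-$L^{p_c}$ constant in \cref{lem:linear-drift}(a) comes from the drift-independent $L^r$ contraction, so $G_*$ is linear in $\norm{\omega_0}_{L^{p_c,\infty}}$.
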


\begin{proof}
If $\Xi_\epsilon=\curl W_\epsilon$, scaling and \eqref{eq:unit-critical-bound} give
\[ \norm{\Omega_\epsilon}_{L^{p_c,\infty}}=\ell_\epsilon^{2/p_c}\norm{\Xi_\epsilon}_{L^{p_c,\infty}}\leq C_a\ell_\epsilon^{1+a}\epsilon^{-1}\norm{\omega_0}_{L^{p_c,\infty}}=C_a\norm{\omega_0}_{L^{p_c,\infty}}. \]
The Lorentz Hardy--Littlewood--Sobolev and Calder\'on--Zygmund estimates give
\begin{equation} \label{eq:HLS}
\norm{K*f}_{L^{q_c,\infty}}\leq C_a\norm f_{L^{p_c,\infty}}.
\end{equation}
\begin{equation} \label{eq:CZ}
\norm{\nabla(K*f)}_{L^{p_c,\infty}}\leq C_a\norm f_{L^{p_c,\infty}}.
\end{equation}
Applying these estimates to $U_\epsilon=K*\Omega_\epsilon$ proves \eqref{eq:velocity-bounds}. Finally, $P_\epsilon=\sum R_iR_j(U_{\epsilon,i}U_{\epsilon,j})$ by its construction and scaling. The weak product estimate and Calder\'on--Zygmund boundedness on $L^{\frac{1}{a},\infty}$ prove \eqref{eq:pressure-bound}.
\end{proof}

\section{Compactness and passage to Euler}

\begin{proposition}\label{prop:compactness}
There exist a sequence $\epsilon_n\downarrow0$, vector fields $U,\Omega$, and a scalar field $P$ such that
\begin{align}
 & U_{\epsilon_n}\longrightarrow U
 &&\text{strongly in }L^2_{\rm loc}(\R^2),\label{eq:strong-U}\\
 & \Omega_{\epsilon_n}\weakstar\Omega
 &&\text{in }L^{p_c,\infty}(\R^2),\label{eq:weak-Omega}\\
 & P_{\epsilon_n}\weakstar P
 &&\text{in }L^{\frac{1}{a},\infty}(\R^2),\label{eq:weak-pressure}\\
 & \Omega=\curl U,\qquad U=K*\Omega.&&\label{eq:curl-limit}
\end{align}
The limit satisfies
\[ U\in L^{q_c,\infty}(\R^2),\qquad \nabla U\in L^{p_c,\infty}(\R^2),\qquad P\in L^{\frac{1}{a},\infty}(\R^2). \]
\end{proposition}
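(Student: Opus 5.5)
The plan is to read off compactness from the uniform bounds of \cref{prop:vort-bound}, using the Banach--Alaoglu theorem in Lorentz spaces together with Rellich-type compactness for the velocity.

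First, the weak-star limits. Since $L^{p_c,\infty}=(L^{p_c',1})^*$ and $L^{\frac1a,\infty}=(L^{(1/a)',1})^*$ have separable preduals ($1<p_c<2$ and $1<1/a<\infty$), the bounds \eqref{eq:vort-bound} and \eqref{eq:pressure-bound} give a subsequence with $\Omega_{\epsilon_n}\weakstar\Omega$ and $P_{\epsilon_n}\weakstar P$. The limits obey the same norm bounds by weak-star lower semicontinuity of the associate norms.

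Second, the strong convergence \eqref{eq:strong-U}. By \eqref{eq:velocity-bounds}, $U_\epsilon$ is bounded in $L^{q_c,\infty}$ and $\nabla U_\epsilon$ is bounded in $L^{p_c,\infty}$. On each ball $B_R$ this gives a bound in $W^{1,s}(B_R)$ for every $1<s<p_c$, since weak $L^{p}$ embeds into $L^{s}_{\rm loc}$ for $s<p$. The Sobolev exponent is $s^*=2s/(2-s)$, which tends to $2p_c/(2-p_c)=2/a=q_c>2$ as $s\uparrow p_c$. Choosing $s$ close to $p_c$ makes $s^*>2$, so Rellich--Kondrachov gives compactness of $W^{1,s}(B_R)$ in $L^2(B_R)$. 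A diagonal extraction over $R=1,2,\dots$ then yields $U_{\epsilon_n}\to U$ in $L^2_{\rm loc}$. The bounds $U\in L^{q_c,\infty}$ and $\nabla U\in L^{p_c,\infty}$ also follow by weak-star extraction and uniqueness of distributional limits: $\nabla U_{\epsilon_n}\to\nabla U$ in $\mathcal D'$, and the weak-star limit in $L^{p_c,\infty}$ must coincide with it.

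Third, the identities \eqref{eq:curl-limit}. Because $\curl$ is continuous in distributions, $\curl U=\lim\curl U_{\epsilon_n}=\lim\Omega_{\epsilon_n}=\Omega$ in $\mathcal D'$; likewise $\diver U=0$. For $U=K*\Omega$, I would pair with $g\in C_c^\infty$, exactly as in the velocity-trace argument at the end of the proof of \cref{thm:critical-profile}:
\[
\pair{K*\Omega_{\epsilon_n}}{g}=\pair{\Omega_{\epsilon_n}}{-K*g}.
\]
Here $K*g\in L^{p_c',1}$ by the dual Lorentz Hardy--Littlewood--Sobolev estimate, so the right side converges to $\pair{\Omega}{-K*g}=\pair{K*\Omega}{g}$. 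The left side equals $\pair{U_{\epsilon_n}}{g}$, which converges to $\pair{U}{g}$ by the local convergence. Hence $U=K*\Omega$. Alternatively, $U-K*\Omega$ is an entire harmonic field in $L^{q_c,\infty}$, so it vanishes by the mean-value estimate.

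The only step needing care is verifying $K*g\in L^{p_c',1}$ for $g\in C_c^\infty$. Since $g$ has compact support, $K*g=O(|x|^{-1})$ at infinity, and $p_c'=2/(1-a)>2$, so $|x|^{-1}\mathbf 1_{|x|>1}\in L^{p_c',1}$. Near the origin $K*g$ is bounded, so this part is harmless. Otherwise the argument is routine; the main technical point is the exponent bookkeeping ensuring $s^*>2$, which uses $a<1$ (equivalently $q_c>2$).
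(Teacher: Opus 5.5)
Your proof is correct and follows essentially the same route as the paper: local $W^{1,s}(B_R)$ bounds from the Lorentz embeddings plus Rellich compactness and a diagonal extraction give the strong $L^2_{\rm loc}$ convergence, Banach--Alaoglu on the separable Lorentz preduals gives the weak-star limits, $\curl U=\Omega$ is identified in distributions, and $U=K*\Omega$ follows by duality against $K*g\in L^{p_c',1}$. The only difference is that you check this last membership by the direct decay $\abs{K*g}\lesssim(1+\abs{x})^{-1}$ with $p_c'>2$, whereas the paper cites the adjoint Lorentz Hardy--Littlewood--Sobolev estimate; also, $2s/(2-s)>2$ already holds for every $s>1$, so $s$ need not be close to $p_c$, and $a<1$ enters only to ensure $p_c>1$ so that some $s\in(1,p_c)$ exists.
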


\begin{proof}
Fix $r$ with
\begin{equation} \label{eq:r-choice}
1<r<p_c.
\end{equation}
On every ball $B_R$, the bounds \eqref{eq:velocity-bounds} give
\[ \sup_{\epsilon\in(0,1]} \norm{U_\epsilon}_{W^{1,r}(B_R)}<\infty. \]
Indeed, the passage from weak $L^{q_c}$ and weak $L^{p_c}$ to $L^r(B_R)$ follows from \cref{prop:lorentz-basic}(iii), since $r<p_c<q_c$.

In dimension two, $W^{1,r}(B_R)\Subset L^2(B_R)$ whenever $r>1$, since $2<\frac{2r}{2-r}$. A diagonal extraction proves \eqref{eq:strong-U}. Applying Banach--Alaoglu to the separable Lorentz preduals gives, after a further common extraction, weak-star limits for $U_\epsilon$, $\nabla U_\epsilon$, $\Omega_\epsilon$, and $P_\epsilon$ in their respective spaces. The local strong limit identifies the first two limits with $U$ and $\nabla U$, proving all asserted bounds and \eqref{eq:weak-pressure}. Passing to the distributional curl gives $\Omega=\curl U$. Finally, $U_\epsilon=K*\Omega_\epsilon$ passes to distributions because the adjoint Hardy--Littlewood--Sobolev estimate maps $L^{q_c',1}$ to $L^{p_c',1}$; hence $U=K*\Omega$.
\end{proof}

\begin{proposition}\label{prop:profile-limit}
The limit pair $(U,P)$ solves
\[ -\frac{1}{1+a}\bigl(aU+y\cdot\nabla U\bigr)+U\cdot\nabla U+\nabla P=0,\qquad \diver U=0, \]
in distributions on $\R^2$.
\end{proposition}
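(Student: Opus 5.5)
We pass to the limit $\epsilon=\epsilon_n\downarrow0$ in the weak formulation of \eqref{eq:eps-profile}, using the convergences of \cref{prop:compactness}. Fix a test field $\varphi\in C_c^\infty(\R^2;\R^2)$; we do not require $\diver\varphi=0$, since the pressure limit \eqref{eq:weak-pressure} is available. Write \eqref{eq:eps-profile} in distributional form, moving all derivatives onto $\varphi$:
\[ \epsilon\pair{U_\epsilon}{\Lambda^{1+a}\varphi}-\frac{1}{1+a}\Bigl(a\pair{U_\epsilon}{\varphi}-\pair{U_\epsilon}{\diver(y\otimes\varphi)}\Bigr)-\pair{U_\epsilon\otimes U_\epsilon}{\nabla\varphi}-\pair{P_\epsilon}{\diver\varphi}=0. \]
Here $\diver(y\otimes\varphi)=2\varphi+y\cdot\nabla\varphi$. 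We also use $U_\epsilon\cdot\nabla U_\epsilon=\diver(U_\epsilon\otimes U_\epsilon)$, which holds because $U_\epsilon$ is smooth and divergence free. Each pairing is justified for the smooth profile with the bounds of \cref{prop:vort-bound}.

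\textbf{The three terms.}
\begin{enumerate}
\item[(i)] \emph{Linear terms.} The $\epsilon$-independent linear terms converge because $U_{\epsilon_n}\to U$ in $L^2_{\rm loc}$ and the test functions $\varphi$, $y\cdot\nabla\varphi$ are compactly supported.
\item[(ii)] \emph{Quadratic term.} This term converges because strong $L^2_{\rm loc}$ convergence gives $U_{\epsilon_n}\otimes U_{\epsilon_n}\to U\otimes U$ in $L^1_{\rm loc}$, and $\nabla\varphi$ is bounded with compact support.
\item[(iii)] \emph{Pressure term.} $\pair{P_{\epsilon_n}}{\diver\varphi}\to\pair{P}{\diver\varphi}$ by \eqref{eq:weak-pressure}, since $\diver\varphi\in C_c^\infty\subset L^{(1/a)',1}$.
\end{enumerate}
The divergence-free condition passes to the limit by the same $L^2_{\rm loc}$ convergence.

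\textbf{Dissipative term.} This is the only genuinely nonlocal term. Since $\Lambda^{1+a}\varphi$ is not compactly supported, local convergence alone does not suffice, and we need a uniform bound instead. For $\varphi\in C_c^\infty$, $\Lambda^{1+a}\varphi$ is smooth and decays like $|y|^{-3-a}$ at infinity. Hence $\Lambda^{1+a}\varphi\in L^{q_c',1}$, because $q_c'=2/(2-a)$ and $|y|^{-3-a}$ is in every $L^{s}$ near infinity with $s(3+a)>2$. Then the Lorentz duality pairing and \eqref{eq:velocity-bounds} give
\[ \epsilon\abs{\pair{U_\epsilon}{\Lambda^{1+a}\varphi}}\leq C\epsilon\norm{U_\epsilon}_{L^{q_c,\infty}}\norm{\Lambda^{1+a}\varphi}_{L^{q_c',1}}\leq C(a,u_0,\varphi)\,\epsilon\to0. \]

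The main point to check is this decay and integrability of $\Lambda^{1+a}\varphi$, together with the legitimacy of the duality identity $\pair{\Lambda^{1+a}U_\epsilon}{\varphi}=\pair{U_\epsilon}{\Lambda^{1+a}\varphi}$ for a smooth $U_\epsilon\in L^{q_c,\infty}$. For the identity, we would approximate $U_\epsilon$ by $\chi_R U_\epsilon$ and use the symmetric kernel representation, or simply regard \eqref{eq:eps-profile} as holding in this dual sense, as in the proof of \cref{lem:pressure-normalization}.

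Collecting the limits (i)–(iii) and the vanishing of the dissipative term yields the limit equation tested against arbitrary $\varphi$. This is exactly the distributional form of the claimed profile equation, with $\diver U=0$.
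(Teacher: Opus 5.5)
Your proposal is correct and follows essentially the same route as the paper. You test \eqref{eq:eps-profile} against an arbitrary (not necessarily divergence-free) $\varphi\in C_c^\infty$, kill the dissipative term by Lorentz--H\"older using $\Lambda^{1+a}\varphi\in L^{q_c',1}$ (the paper's \cref{lem:lorentz-test-functions}), and pass the linear, quadratic, and pressure terms to the limit via strong $L^2_{\rm loc}$ convergence and weak-star convergence of $P_{\epsilon_n}$. Your remark on justifying $\pair{\Lambda^{1+a}U_\epsilon}{\varphi}=\pair{U_\epsilon}{\Lambda^{1+a}\varphi}$ addresses a point the paper covers only with the phrase ``integration by parts.''
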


\begin{proof}
Test \eqref{eq:eps-profile} against an arbitrary $\varphi\in C_c^\infty(\R^2;\R^2)$. Integration by parts gives
\begin{equation} \label{eq:eps-weak-profile}
0=\epsilon\int_{\R^2}U_\epsilon\cdot\Lambda^{1+a}\varphi\,dy+\frac{1}{1+a}\int_{\R^2}U_\epsilon\cdot\bigl((2-a)\varphi+y\cdot\nabla\varphi\bigr)\,dy-\int_{\R^2}(U_\epsilon\otimes U_\epsilon):\nabla\varphi\,dy-\int_{\R^2}P_\epsilon\diver\varphi\,dy.
\end{equation}
Here the coefficient $2-a$ follows from
\[ -\int(y\cdot\nabla U_\epsilon)\cdot\varphi =\int U_\epsilon\cdot(2\varphi+y\cdot\nabla\varphi). \]
Since $1<q_c<\infty$, the Lorentz--H\"older inequality in \cref{prop:lorentz-basic}(i), \cref{lem:lorentz-test-functions}, and \eqref{eq:velocity-bounds} imply
\[ \left| \epsilon\int_{\R^2}U_\epsilon\cdot\Lambda^{1+a}\varphi\,dy \right| \leq C\epsilon \norm{U_\epsilon}_{L^{q_c,\infty}} \norm{\Lambda^{1+a}\varphi}_{L^{q_c',1}} \longrightarrow0. \]
Since \eqref{eq:strong-U} implies $U_{\epsilon_n}\otimes U_{\epsilon_n}\to U\otimes U$ strongly in $L^1_{\rm loc}$, while \eqref{eq:weak-pressure} passes the pressure term to the limit, we may let $n\to\infty$ in \eqref{eq:eps-weak-profile}. The resulting identity for every compactly supported vector test field is precisely the asserted profile equation.
\end{proof}

We now prove the vorticity formulation in the range $0<a<\frac{1}{2}$.

\begin{proof}[Proof of \cref{cor:vorticity}]
Recall that
\[ \Omega\in L^{p_c,\infty}_{\rm loc}, \qquad U\in L^{q_c,\infty}_{\rm loc}. \]
By \cref{prop:lorentz-basic}(ii),
$U\Omega\in L^{r_*,\infty}_{\rm loc}$, where
\[ \frac1{r_*}=\frac1{p_c}+\frac1{q_c}. \]
The finite-measure embedding in \cref{prop:lorentz-basic}(iii) then gives $U\Omega\in L^1_{\rm loc}$ provided
\[ \frac1{p_c} + \frac1{q_c}<1. \]
This condition is $\frac{1 + a}{2} + \frac a2<1$, which is equivalent to $a<\frac{1}{2}$.

We identify the equation by passing directly to the viscous vorticity equation. Fix $\psi\in C_c^\infty(\R^2)$. Its weak form is
\begin{equation} \label{eq:weak-vort-eps}
\begin{aligned} 0={}&\epsilon\int_{\R^2}\Omega_\epsilon\Lambda^{1+a}\psi\,dy-\int_{\R^2}\Omega_\epsilon U_\epsilon\cdot\nabla\psi\,dy\\ &+\frac1{1+a}\int_{\R^2}\Omega_\epsilon y\cdot\nabla\psi\,dy+(p_c-1)\int_{\R^2}\Omega_\epsilon\psi\,dy. \end{aligned}
\end{equation}
Indeed, with $\widetilde U_\epsilon=U_\epsilon-y/(1+a)$, this follows by integrating $\widetilde U_\epsilon\cdot\nabla\Omega_\epsilon$ by parts and using $\diver\widetilde U_\epsilon=-p_c$.

Choose $r\in(1,p_c)$ sufficiently close to $p_c$. By local Sobolev
compactness,
\[ U_{\epsilon_n}\to U \quad \text{strongly in }L^\rho_{\rm loc} \]
for every
\[ \rho<\frac{2r}{2 - r}. \]
Letting $r\uparrow p_c$, the upper endpoint tends to $q_c$. Since $p_c'<q_c$ precisely when $a<\frac{1}{2}$, we may choose $\rho>p_c'$. On a bounded set, \cref{prop:lorentz-basic}(iii) gives the continuous embedding $L^\rho\hookrightarrow L^{p_c',1}$. Hence the convergence is strong in $L^{p_c',1}$, while $\Omega_{\epsilon_n}\weakstar\Omega$ in $L^{p_c,\infty}$. Therefore
\[ \Omega_{\epsilon_n}U_{\epsilon_n} \longrightarrow\Omega U \]
in distributions.

The dissipative term vanishes by
\cref{prop:lorentz-basic}(i), \cref{lem:lorentz-test-functions}, and the
uniform weak-$L^{p_c}$ bound:
\[ \left| \epsilon_n\int\Omega_{\epsilon_n}\Lambda^{1 + a}\psi \right| \leq C\epsilon_n \norm{\Omega_{\epsilon_n}}_{L^{p_c,\infty}} \norm{\Lambda^{1 + a}\psi}_{L^{p_c',1}} \longrightarrow0. \]
Passing to the limit in \eqref{eq:weak-vort-eps} gives
\begin{equation} \label{eq:weak-vort-limit}
\int_{\R^2}\Omega\left(U-\frac y{1+a}\right)\cdot\nabla\psi\,dy=(p_c-1)\int_{\R^2}\Omega\psi\,dy.
\end{equation}
Since
\[ \diver\left(U - \frac y{1 + a}\right) = - p_c, \]
\eqref{eq:weak-vort-limit} is exactly the distributional formulation of \eqref{eq:euler-vort-profile}. The equivalence with the physical transport equation follows from the self-similar change of variables.
\end{proof}

\section{The initial trace}

For every $\epsilon\in(0,1]$, define the physical variables
\[ u_\epsilon(t,x)\triangleq t^{-\frac{a}{1+a}} U_\epsilon\left(\frac{x}{t^{{\frac{1}{1+a}}}}\right), \qquad p_\epsilon(t,x)\triangleq t^{-\frac{2a}{1+a}} P_\epsilon\left(\frac{x}{t^{{\frac{1}{1+a}}}}\right). \]
Then
\begin{equation} \label{eq:fractional-physical}
\partial_t u_\epsilon+u_\epsilon\cdot\nabla u_\epsilon+\nabla p_\epsilon+\epsilon\Lambda^{1+a}u_\epsilon=0,\qquad \diver u_\epsilon=0
\end{equation}
with initial datum $u_0$. Criticality of the exponents gives
\begin{equation} \label{eq:physical-uniform}
\begin{aligned} &\sup_{t>0}\left(\norm{u_\epsilon(t)}_{L^{q_c,\infty}}+\norm{\nabla u_\epsilon(t)}_{L^{p_c,\infty}}+\norm{p_\epsilon(t)}_{L^{{\frac{1}{a}},\infty}}\right)\\ &\hspace{5em}=\norm{U_\epsilon}_{L^{q_c,\infty}}+\norm{\nabla U_\epsilon}_{L^{p_c,\infty}}+\norm{P_\epsilon}_{L^{\frac{1}{a},\infty}}\leq C(a,u_0). \end{aligned}
\end{equation}

\begin{lemma}\label{lem:uniform-trace}
For every $\psi\in C_c^\infty(\R^2;\R^2)$,
\begin{equation} \label{eq:uniform-trace}
\abs{\pair{u_\epsilon(t)-u_0}{\psi}}\leq C(a,u_0,\psi)t,\qquad t>0,\quad \epsilon\in(0,1].
\end{equation}
\end{lemma}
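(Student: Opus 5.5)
Test the velocity equation \eqref{eq:fractional-physical} against the fixed field $\psi$ and integrate in time from $\delta$ to $t$, then let $\delta\downarrow0$ using the weak-star initial trace from \cref{prop:eps-family}. Since $\psi$ need not be divergence free, the pressure term must be kept. Every term will be bounded by $\epsilon$-uniform critical quantities from \eqref{eq:physical-uniform}. The rate $t$, rather than $t^{1/(1+a)}$, should come from the fact that each term is time-integrable with a scale-invariant bound independent of $t$.

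Concretely, for $0<\delta<t$ I would write
\[ \pair{u_\epsilon(t)-u_\epsilon(\delta)}{\psi}=\int_\delta^t\Bigl(\pair{u_\epsilon\otimes u_\epsilon}{\nabla\psi}+\pair{p_\epsilon}{\diver\psi}-\epsilon\pair{u_\epsilon}{\Lambda^{1+a}\psi}\Bigr)\,ds. \]
This identity is justified because $U_\epsilon$ and $P_\epsilon$ are smooth, so $u_\epsilon$ is smooth on $(0,\infty)\times\R^2$. I then estimate the three terms separately.

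\emph{Nonlinear term.} By \cref{prop:lorentz-basic}(ii), $u_\epsilon\otimes u_\epsilon\in L^{1/a,\infty}$ with norm at most $C\norm{u_\epsilon(s)}_{L^{q_c,\infty}}^2\le C(a,u_0)$, uniformly in $s$ and $\epsilon$. Since $1/a>1$, Lorentz--H\"older with $\nabla\psi\in L^{(1/a)',1}$ bounds the integrand by $C(a,u_0,\psi)$.

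\emph{Pressure term.} The same argument applies, using the uniform bound on $\norm{p_\epsilon(s)}_{L^{1/a,\infty}}$ in \eqref{eq:physical-uniform}.

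\emph{Dissipative term.} Here $\epsilon\le1$ and $\norm{u_\epsilon(s)}_{L^{q_c,\infty}}\le C$. By \cref{lem:lorentz-test-functions}, $\Lambda^{1+a}\psi\in L^{q_c',1}$. Note that $\Lambda^{1+a}\psi$ decays only like $|y|^{-3-a}$, which is still integrable in $L^{q_c',1}$ because $q_c'<2$ is not an obstruction at that decay rate.

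Each integrand is therefore bounded by a constant independent of $s$ and $\epsilon$, so the right-hand side is at most $C(a,u_0,\psi)(t-\delta)$.

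For the limit $\delta\downarrow0$, \cref{prop:eps-family} gives $u_\epsilon(\delta)\weakstar u_0$ in $L^{q_c,\infty}$, and $\psi\in L^{q_c',1}$. Hence $\pair{u_\epsilon(\delta)}{\psi}\to\pair{u_0}{\psi}$, which yields \eqref{eq:uniform-trace}.

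The step needing the most care is the weak-star velocity trace for each fixed $\epsilon$. It is inherited from \cref{thm:critical-profile} through the rescaling $u_\epsilon(t,x)=\epsilon v_\epsilon(\epsilon t,x)$, so it should hold exactly as stated in \cref{prop:eps-family}. Also, the Lorentz--H\"older pairings require the exponents $q_c$ and $1/a$ to lie strictly between $1$ and $\infty$, which holds for $0<a<1$. I expect no genuine obstacle beyond confirming that the constants in \eqref{eq:physical-uniform} are independent of $\epsilon$, and this is exactly \cref{prop:vort-bound}.
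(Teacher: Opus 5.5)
Your proposal is correct and is essentially the paper's proof: test \eqref{eq:fractional-physical} against the time-independent $\psi$ on $(\delta,t)$, and pass $\delta\downarrow0$ using the weak-star trace from \cref{prop:eps-family}. The convective, pressure, and dissipative integrands are then bounded by Lorentz--H\"older pairings against the time- and $\epsilon$-uniform bounds \eqref{eq:physical-uniform}.

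Your remark about the $|y|^{-3-a}$ decay of $\Lambda^{1+a}\psi$ is not needed, since \cref{lem:lorentz-test-functions} already places this function in $L^1\cap L^\infty\subset L^{q_c',1}$.
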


\begin{proof}
The fixed-$\epsilon$ solution supplied by \cref{prop:eps-family} has weak-star initial datum $u_0$. Testing \eqref{eq:fractional-physical} on $(\tau,t)$ against the time-independent field $\psi$ and then sending $\tau\downarrow0$ gives
\begin{align*}
 \pair{u_\epsilon(t)-u_0}{\psi}
 &=
 \int_0^t\int_{\R^2}
 (u_\epsilon\otimes u_\epsilon):\nabla\psi\,dx\,d\tau\\
 &\quad
 +\int_0^t\int_{\R^2}
 p_\epsilon\diver\psi\,dx\,d\tau\\
 &\quad
 -\epsilon\int_0^t\int_{\R^2}
 u_\epsilon\cdot\Lambda^{1+a}\psi\,dx\,d\tau.
\end{align*}
Because $\frac{q_c}{2}={\frac{1}{a}}$, the weak-product estimate and the Lorentz--H\"older inequality in \cref{prop:lorentz-basic}(i)--(ii), together with \cref{lem:lorentz-test-functions} and \eqref{eq:physical-uniform}, yield
\[ \left| \int_{\R^2}(u_\epsilon\otimes u_\epsilon):\nabla\psi\,dx \right| \leq C\norm{u_\epsilon}_{L^{q_c,\infty}}^2 \norm{\nabla\psi}_{L^{\frac{1}{1-a},1}}. \]
Here $(1/a)'=1/(1-a)$, so the pressure term is bounded by the same Lorentz--H\"older inequality:
\[ \left| \int_{\R^2}p_\epsilon\diver\psi\,dx \right| \leq \norm{p_\epsilon}_{L^{{\frac{1}{a}},\infty}} \norm{\diver\psi}_{L^{\frac{1}{1-a},1}}. \]
Similarly, $q_c\in(1,\infty)$ and $\Lambda^{1+a}\psi\in L^{q_c',1}$ by \cref{lem:lorentz-test-functions}, so
\[ \left| \int_{\R^2}u_\epsilon\cdot\Lambda^{1+a}\psi\,dx \right| \leq \norm{u_\epsilon}_{L^{q_c,\infty}} \norm{\Lambda^{1+a}\psi}_{L^{q_c',1}}. \]
All three right-hand sides are uniform in time and $\epsilon$, which proves
\eqref{eq:uniform-trace}.
\end{proof}

\begin{proposition}\label{prop:trace}
Let
\[ u(t,x)\triangleq t^{-\frac{a}{1+a}} U\left(\frac{x}{t^{{\frac{1}{1+a}}}}\right), \qquad \omega(t,x)\triangleq \curl u(t,x) =t^{-1}\Omega\left(\frac{x}{t^{{\frac{1}{1+a}}}}\right). \]
Then $u\in C\bigl([0,\infty);L^2_{\rm loc}(\R^2)\bigr)$ and $\omega\in L^\infty\bigl((0,\infty); L^{p_c,\infty}(\R^2)\bigr)$. In particular,
\[ u(t)\to u_0 \quad\text{in }\mathcal D'(\R^2) \quad\text{and strongly in }L^2_{\rm loc}(\R^2) \quad\text{as }t\downarrow0,\]
and
\[ \omega(t)\weakstar\omega_0 \quad\text{in }L^{p_c,\infty}(\R^2) \quad\text{as }t\downarrow0. \]
Consequently, for every $1<r<p_c$,
\[ \omega(t)\rightharpoonup\omega_0 \quad\text{weakly in }L^r_{\rm loc}(\R^2)\quad\text{as }t\downarrow0. \]
\end{proposition}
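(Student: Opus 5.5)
The plan has four steps: continuity for $t>0$ by scaling, the distributional trace from \cref{lem:uniform-trace}, upgrading to strong $L^2_{\rm loc}$ convergence by a compactness argument, and then the vorticity statements.

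\emph{Continuity for positive times.} Since $U\in L^2_{\rm loc}$, the map $s\mapsto s^{-a}U(\cdot/s)$ is continuous into $L^2_{\rm loc}$ for $s>0$; this follows from continuity of dilations on $L^2(B_R)$, approximating $U$ by continuous functions on a larger ball. With $s=t^{1/(1+a)}$ this gives $u\in C((0,\infty);L^2_{\rm loc})$. The vorticity bound $\norm{\omega(t)}_{L^{p_c,\infty}}=\norm{\Omega}_{L^{p_c,\infty}}$ is exact scale invariance, since $t^{-1}=(t^{1/(1+a)})^{-2/p_c}$. So $\omega\in L^\infty((0,\infty);L^{p_c,\infty})$.

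\emph{Distributional trace.} For fixed $t$ and $\psi\in C_c^\infty$ we have $\pair{u_{\epsilon_n}(t)}{\psi}\to\pair{u(t)}{\psi}$. This holds because $U_{\epsilon_n}\to U$ in $L^2_{\rm loc}$ and the change of variables only dilates the test function. Passing to the limit in \cref{lem:uniform-trace}, which is uniform in $\epsilon$, gives $\abs{\pair{u(t)-u_0}{\psi}}\le C(a,u_0,\psi)t$. Hence $u(t)\to u_0$ in $\mathcal D'$.

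\emph{Strong $L^2_{\rm loc}$ convergence.} This is the main step. By scaling, $\norm{\nabla u(t)}_{L^{p_c,\infty}}$ and $\norm{u(t)}_{L^{q_c,\infty}}$ are bounded uniformly in $t$. Fixing $1<r<p_c$, \cref{prop:lorentz-basic}(iii) gives that $\{u(t)\}_{0<t\le1}$ is bounded in $W^{1,r}(B_R)$ for each $R$. The Rellich embedding $W^{1,r}(B_R)\Subset L^2(B_R)$ then makes $\{u(t)\}$ precompact in $L^2(B_R)$. Any subsequential $L^2(B_R)$ limit as $t\downarrow0$ must equal $u_0$ by the distributional convergence, so the whole family converges: $u(t)\to u_0$ in $L^2_{\rm loc}$. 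Setting $u(0)=u_0$ gives $u\in C([0,\infty);L^2_{\rm loc})$. The only care needed is that $u_0\in L^2_{\rm loc}$ ($2a<2$) and that the Rellich step uses the uniform gradient bound rather than any time regularity.

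\emph{Vorticity trace.} For $\phi\in C_c^\infty$, integration by parts gives $\pair{\omega(t)-\omega_0}{\phi}=-\pair{u(t)-u_0}{\nabla^\perp\phi}\to0$. We need $\curl u_0=\omega_0$ on all of $\R^2$, which was shown in the proof of \cref{thm:critical-profile}. The $L^{p_c,\infty}$ norms of $\omega(t)-\omega_0$ are uniformly bounded, and $C_c^\infty$ is dense in the separable predual $L^{p_c',1}$. An $\varepsilon/3$ argument therefore upgrades this to $\omega(t)\weakstar\omega_0$ in $L^{p_c,\infty}$. Finally, for $1<r<p_c$ and $g\in L^{r'}$ supported in a ball $B$, we have $g\in L^{p_c',1}(B)$ by the finite-measure embedding (\cref{prop:lorentz-basic}(iii)), since $r'>p_c'$. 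Hence $\pair{\omega(t)}{g}\to\pair{\omega_0}{g}$, which is the asserted weak $L^r_{\rm loc}$ convergence.
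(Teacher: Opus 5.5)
Your proposal is correct and follows the paper's proof essentially step for step. You get the distributional trace by passing to the limit in the $\epsilon$-uniform estimate of \cref{lem:uniform-trace}, the strong $L^2_{\rm loc}$ trace from scale-invariant $W^{1,r}(B_R)$ bounds, Rellich and uniqueness of subsequential limits, continuity at $t>0$ from dilations, and the vorticity statements by the curl identity plus density in $L^{p_c',1}$. The only cosmetic difference is in the final step: you pair directly with $g\in L^{r'}(B)\subset L^{p_c',1}$ via \eqref{eq:strong-to-lorentz}, whereas the paper invokes weak compactness in $L^r(B)$. Both routes are equally valid.
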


\begin{proof}
For each fixed $t>0$, \eqref{eq:strong-U} and the self-similar change of variables imply
\[ u_{\epsilon_n}(t)\longrightarrow u(t) \quad\text{strongly in }L^2_{\rm loc}(\R^2). \]
Passing to the limit in \eqref{eq:uniform-trace} gives
\[ \abs{\pair{u(t)-u_0}{\psi}} \leq C(a,u_0,\psi)t \]
for every compactly supported $\psi$. This proves convergence in distributions.

It remains to upgrade the convergence. Choose $r$ as in
\eqref{eq:r-choice}. The norms in
\[ u(t,x)=t^{-\frac{a}{1+a}}U(x/t^{{\frac{1}{1+a}}}),\qquad \nabla u(t,x)=t^{-1}\nabla U(x/t^{{\frac{1}{1+a}}}) \]
are scaling invariant in $L^{q_c,\infty}$ and
$L^{p_c,\infty}$, respectively. Hence, for every $R>0$,
\[ \sup_{0<t\leq1}\norm{u(t)}_{W^{1,r}(B_R)} \leq C(R,a,u_0). \]
The set $\{u(t):0<t\leq1\}$ is therefore precompact in $L^2(B_R)$. Every sequence $t_j\downarrow0$ has a subsequence converging strongly in $L^2(B_R)$; its distributional limit must be $u_0$. Thus every such subsequence converges to $u_0$, and a standard contradiction argument shows that the full family converges strongly. Since $R$ is arbitrary, the asserted strong initial trace follows.

At each $t_0>0$, continuity of $t\mapsto u(t)$ in $L^2_{\rm loc}$ follows directly from the strong continuity of dilations and scalar multiplication on $L^2_{\rm loc}$. Combined with the continuity at $t=0$ proved above, this gives $u\in C\bigl([0,\infty);L^2_{\rm loc}(\R^2)\bigr)$.

The critical vorticity norm is invariant under the self-similar scaling:
\[ \norm{\omega(t)}_{L^{p_c,\infty}} =\norm{\Omega}_{L^{p_c,\infty}}, \qquad \frac{2}{(1+a)p_c}=1. \]
Thus $\omega$ has the asserted uniform bound. Since $u(t)\to u_0$ strongly in $L^2_{\rm loc}$,
\[ \omega(t)=\curl u(t)\longrightarrow \omega_0 = \curl u_0 \quad\text{in }\mathcal D'(\R^2). \]
The uniform $L^{p_c,\infty}$ bound, the duality in \cref{prop:lorentz-basic}(iv), and the density of $C_c^\infty(\R^2)$ in $L^{p_c',1}(\R^2)$ upgrade this convergence to
\[ \omega(t)\weakstar\omega_0 \quad\text{in }L^{p_c,\infty}(\R^2). \]
On every bounded set, \cref{prop:lorentz-basic}(iii) gives $L^{p_c,\infty}\hookrightarrow L^r$ for $1<r<p_c$. Distributional convergence and weak compactness in $L^r$ therefore give the final assertion.
\end{proof}

\begin{proof}[Proof of \cref{thm:main}]
\Cref{prop:profile-limit} proves part (a). The associated self-similar field solves the Euler equations on every time interval bounded away from zero. The scale-invariant Lorentz bounds imply that $u$ and $u\otimes u$ are locally integrable up to $t=0$. \Cref{prop:trace} supplies the initial trace. Integrating the equation against a divergence-free spacetime test function and sending the lower time boundary to zero gives the weak formulation in \cref{def:weak}. The remaining continuity, vorticity bounds, and vorticity convergence statements follow from \cref{prop:trace}.
\end{proof}

\subsection*{Acknowledgement}
The author thanks Prof. Alexandru Ionescu for asking him to extend the range of the parameter $a$ from that in the earlier version. The author also thanks Hyunwoo Kwon for pointing out that Appendix B uses a similar Galerkin scheme to Bradshaw and Tsai \cite{BradshawTsai2017}.

\appendix

\section{Lorentz spaces and the Biot--Savart estimates}
\label{app:lorentz}

This appendix records the Lorentz-space facts used in the paper and proves the two estimates \eqref{eq:HLS}--\eqref{eq:CZ}. All functions below are defined on $\R^2$, unless a different measure space is explicitly indicated.

\subsection{Rearrangements and Lorentz--H\"older inequalities}

For a measurable function $f$, let
\[ \mu_f(\lambda) \triangleq \abs{\{x:\abs{f(x)}>\lambda\}}, \qquad \lambda>0, \]
and let
\[ f^*(t) \triangleq \inf\{\lambda>0:\mu_f(\lambda)\leq t\}, \qquad t>0, \]
be its decreasing rearrangement. For $1\leq p<\infty$ and
$1\leq s<\infty$, set
\begin{equation} \label{eq:lorentz-norm}
\norm{f}_{L^{p,s}}\triangleq\left(\int_0^\infty\bigl(t^{1/p}f^*(t)\bigr)^s\,\frac{dt}{t}\right)^{1/s}.
\end{equation}
and set
\[ \norm{f}_{L^{p,\infty}} \triangleq \sup_{t>0}t^{1/p}f^*(t) = \sup_{\lambda>0} \lambda\mu_f(\lambda)^{1/p}. \]
Equivalent choices of Lorentz norms change only the constants below.

\begin{proposition}[Basic Lorentz estimates]
\label{prop:lorentz-basic}
The following statements hold.
\begin{enumerate}
\item[(i)] If $1<p<\infty$, $f\in L^{p,\infty}$, and $g\in L^{p',1}$, then
\begin{equation} \label{eq:lorentz-holder}
\int_{\R^2}\abs{f(x)g(x)}\,dx\leq\norm{f}_{L^{p,\infty}}\norm{g}_{L^{p',1}}.
\end{equation}
\item[(ii)] If $1<p_1,p_2<\infty$ and $1\leq r<\infty$ satisfy
\[ \frac1r=\frac1{p_1}+\frac1{p_2}, \]
then
\begin{equation} \label{eq:weak-product}
\norm{fg}_{L^{r,\infty}}\leq C(p_1,p_2)\norm{f}_{L^{p_1,\infty}}\norm{g}_{L^{p_2,\infty}}.
\end{equation}
\item[(iii)] If $E$ has finite measure and $1\leq r<p<\infty$, then
\begin{equation} \label{eq:weak-local-embedding}
\norm{f}_{L^r(E)}\leq C(p,r)\abs{E}^{\frac1r-\frac1p}\norm{f}_{L^{p,\infty}}.
\end{equation}
If $1\leq s<\rho\leq\infty$, then
\begin{equation} \label{eq:strong-to-lorentz}
\norm{h}_{L^{s,1}(E)}\leq C(s,\rho)\abs{E}^{\frac1s-\frac1\rho}\norm{h}_{L^\rho(E)}.
\end{equation}
\item[(iv)] For $1<p<\infty$, the space $L^{p,\infty}$ is the dual of $L^{p',1}$, with equivalent norms, under the integral pairing. Moreover, $C_c^\infty(\R^2)$ is dense in $L^{p',1}$.
\end{enumerate}
\end{proposition}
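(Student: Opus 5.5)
The four items are standard Lorentz-space facts, and I would prove them directly from the rearrangement definition \eqref{eq:lorentz-norm}.

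\emph{(i)} I would start from the Hardy--Littlewood rearrangement inequality $\int\abs{fg}\leq\int_0^\infty f^*(t)g^*(t)\,dt$. Since $f^*(t)\leq t^{-1/p}\norm f_{L^{p,\infty}}$ and $1/p+1/p'=1$, this gives
\[
\int\abs{fg}\leq\norm f_{L^{p,\infty}}\int_0^\infty t^{1/p'}g^*(t)\,\frac{dt}{t}=\norm f_{L^{p,\infty}}\norm g_{L^{p',1}}.
\]

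\emph{(ii)} I would use the rearrangement inequality $(fg)^*(t)\leq f^*(t/2)g^*(t/2)$. Inserting $f^*(t/2)\leq (t/2)^{-1/p_1}\norm f_{L^{p_1,\infty}}$ and $g^*(t/2)\leq (t/2)^{-1/p_2}\norm g_{L^{p_2,\infty}}$, we get $t^{1/r}(fg)^*(t)\leq 2^{1/r}\norm f\norm g$. The constant is $C=2^{1/r}$, and no restriction beyond $1/r=1/p_1+1/p_2$ is needed.

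\emph{(iii)} For the first estimate I would bound the rearrangement of $f\1_E$ by $f^*$ on $[0,\abs E]$. Then
\[
\int_E\abs f^r\leq\norm f^r_{L^{p,\infty}}\int_0^{\abs E}t^{-r/p}\,dt=\frac{\norm f^r_{L^{p,\infty}}}{1-r/p}\,\abs E^{1-r/p},
\]
which is finite because $r<p$. Taking the $r$-th root gives $C(p,r)\abs E^{1/r-1/p}$.

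For the second estimate, with $\rho<\infty$, I would apply Hölder on $(0,\abs E)$ with exponents $\rho$ and $\rho'$:
\[
\int_0^{\abs E}t^{1/s-1}h^*(t)\,dt\leq\norm{h^*}_{L^\rho}\Bigl(\int_0^{\abs E}t^{(1/s-1)\rho'}\,dt\Bigr)^{1/\rho'}.
\]
The last integral converges exactly when $(1/s-1)\rho'>-1$, that is, when $1/s>1/\rho$. It produces the factor $\abs E^{1/s-1/\rho}$, and $\norm{h^*}_{L^\rho}=\norm h_{L^\rho(E)}$. The case $\rho=\infty$ is a direct integration of $t^{1/s-1}$ on $(0,\abs E)$.

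\emph{(iv)} For the duality statement, (i) already gives that each $f\in L^{p,\infty}$ defines a bounded functional on $L^{p',1}$. For the converse I would argue as follows.
\begin{itemize}
\item Given a bounded functional $\ell$ on $L^{p',1}$, the indicator norm is $\norm{\1_A}_{L^{p',1}}=p'\abs A^{1/p'}$. Hence $A\mapsto\ell(\1_A)$ is a measure, absolutely continuous and $\sigma$-finite.
\item By Radon--Nikodym, $\ell$ is represented by a locally integrable $f$.
\item Testing $\ell$ on $g=\1_{\{\abs f>\lambda\}}\overline{\operatorname{sgn} f}$ gives $\lambda\mu_f(\lambda)\leq\norm\ell\,p'\mu_f(\lambda)^{1/p'}$. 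This is exactly $\lambda\mu_f(\lambda)^{1/p}\leq p'\norm\ell$, first for sets of finite measure and then in general.
\item Since simple functions with finite-measure support are dense in $L^{p',1}$, the representation extends from them to all of $L^{p',1}$.
\end{itemize}

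Density of $C_c^\infty$ in $L^{p',1}$ follows in three steps. First, for $g\in L^{p',1}$, truncating in height and support converges by dominated convergence applied to the integral of $t^{1/p'}g^*(t)$, since $p'<\infty$. Second, a bounded compactly supported function is approximated in $L^2$ by mollification, with supports in a fixed ball $E$. Third, the second estimate of (iii), with $s=p'$ and $\rho$ large, converts that $L^2$ convergence on $E$ into $L^{p',1}$ convergence.

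The only delicate step is (iv), specifically the Radon--Nikodym identification and the passage from finite-measure sets to the full weak-norm bound; everything else is one-line rearrangement estimates.
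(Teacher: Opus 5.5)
Your proposal is correct and follows essentially the paper's route: the Hardy--Littlewood rearrangement inequality for (i), rearrangement or layer-cake bounds for (ii)--(iii), a Radon--Nikodym representation tested against sign-times-indicator functions for the duality, and truncation plus mollification for density. Your use of $(fg)^*(t)\leq f^*(t/2)g^*(t/2)$, H\"older in $t$, and testing directly on superlevel sets instead of bounding $f^{**}$ are only cosmetic variants of the paper's arguments.

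One small slip is in the density step. $L^2$ convergence feeds into \eqref{eq:strong-to-lorentz} only when $p'<2$, so you should say instead that the mollifications converge in $L^\rho(E)$ for some $\rho>p'$, which holds automatically for a bounded compactly supported function.
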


\begin{proof}
We first prove the rearrangement inequality
\begin{equation} \label{eq:hardy-littlewood-rearrangement}
\int_{\R^2}\abs{fg}\leq\int_0^\infty f^*(t)g^*(t)\,dt.
\end{equation}
Indeed, the layer-cake formula and Tonelli's theorem give
\[ \int_{\R^2}\abs{fg} = \int_0^\infty\int_0^\infty \abs{\{\abs f>\lambda\}\cap\{\abs g>\eta\}} \,d\lambda\,d\eta. \]
The measure of the intersection is at most $\min\{\mu_f(\lambda),\mu_g(\eta)\}$. Applying the layer-cake formula once more on $(0,\infty)$ gives \eqref{eq:hardy-littlewood-rearrangement}. Since
\[ f^*(t) \leq t^{-1/p}\norm{f}_{L^{p,\infty}}, \]
we obtain
\[ \int_{\R^2}\abs{fg} \leq \norm{f}_{L^{p,\infty}} \int_0^\infty t^{-1/p}g^*(t)\,dt = \norm{f}_{L^{p,\infty}}\norm{g}_{L^{p',1}}, \]
because $1/p'-1=-1/p$. This proves (i).

For (ii), let $F=\norm{f}_{L^{p_1,\infty}}$ and $G=\norm{g}_{L^{p_2,\infty}}$. For every $\alpha,\lambda>0$,
\[ \{\abs{fg}>\lambda\} \subset \{\abs f>\alpha\} \cup \{\abs g>\lambda/\alpha\}. \]
Consequently,
\[ \mu_{fg}(\lambda) \leq F^{p_1}\alpha^{-p_1} + G^{p_2}\alpha^{p_2}\lambda^{-p_2}. \]
If $F=0$ or $G=0$, the conclusion is immediate. Otherwise, choosing
\[ \alpha = F^{\frac{p_1}{p_1+p_2}} G^{-\frac{p_2}{p_1+p_2}} \lambda^{\frac{p_2}{p_1+p_2}} \]
shows that
\[ \mu_{fg}(\lambda) \leq 2\left(\frac{FG}{\lambda}\right)^r, \qquad r=\frac{p_1p_2}{p_1+p_2}, \]
which proves \eqref{eq:weak-product}.

To prove the first estimate in (iii), write $M=\norm{f}_{L^{p,\infty}}$ and use
\[ \abs{\{x\in E:\abs{f(x)}>\lambda\}} \leq \min\{\abs E,M^p\lambda^{-p}\}. \]
The layer-cake representation of the $L^r$ norm, split at $\lambda_0=M\abs E^{-1/p}$, gives
\[ \int_E\abs f^r \leq C(p,r)M^r\abs E^{1-r/p}. \]
For the second estimate, extend $h$ by zero outside $E$. Then $h^*(t)=0$ for $t>\abs E$, and H\"older's inequality gives $h^*(t)\leq t^{-1/\rho}\norm{h}_{L^\rho(E)}$. Thus
\[ \norm{h}_{L^{s,1}} \leq \norm{h}_{L^\rho(E)} \int_0^{\abs E}t^{1/s-1/\rho}\,\frac{dt}{t}, \]
which is \eqref{eq:strong-to-lorentz}.

Part (i) shows that every $f\in L^{p,\infty}$ defines a bounded functional on $L^{p',1}$. Conversely, let $\ell$ be a bounded functional on $L^{p',1}$. Its restriction to bounded functions supported on sets of finite measure is represented by integration against a measurable function $f$. Testing with $\operatorname{sgn}(f)\1_E$ gives
\[ \int_E\abs f \leq C\norm{\ell}\abs E^{1/p'}. \]
Taking the supremum over sets $E$ of measure $t$ yields
\[ f^{**}(t) \triangleq \frac1t\int_0^t f^*(s)\,ds \leq C\norm{\ell}t^{-1/p}, \]
and therefore $f\in L^{p,\infty}$. This proves the duality assertion. Finally, truncating a function in value and in space, followed by approximation by simple functions and then by smooth functions, proves density in $L^{p',1}$; the convergence follows directly from \eqref{eq:lorentz-norm}, since its second Lorentz index is finite.
\end{proof}

\begin{lemma}[Test functions]
\label{lem:lorentz-test-functions}
If $1<r<\infty$, $0<\sigma<2$, and $\phi\in C_c^\infty(\R^2)$, then
\[ \Lambda^\sigma\phi\in L^1(\R^2)\cap L^\infty(\R^2) \subset L^{r,1}(\R^2). \]
\end{lemma}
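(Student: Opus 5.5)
We must show that for $\phi\in C_c^\infty(\R^2)$ and $0<\sigma<2$, the function $\Lambda^\sigma\phi$ is bounded and integrable, and then that $L^1\cap L^\infty\subset L^{r,1}$ for $1<r<\infty$. The plan is to get boundedness from the Fourier side, integrability from the far-field decay of the singular-integral representation, and the inclusion from the decreasing rearrangement.

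\emph{Boundedness.} Since $\widehat{\phi}$ is Schwartz, $|\xi|^\sigma\widehat\phi(\xi)\in L^1(\R^2)$. Hence $\Lambda^\sigma\phi$ is bounded, in fact continuous, by Fourier inversion.

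\emph{Integrability.} Suppose $\supp\phi\subset B_R$. For $\abs x\geq 2R$ we have $\phi(x)=0$, so the singular-integral representation
\[ \Lambda^\sigma\phi(x)=c_\sigma\,\mathrm{p.v.}\!\int_{\R^2}\frac{\phi(x)-\phi(y)}{\abs{x-y}^{2+\sigma}}\,dy \]
(the one used in \cref{lem:fractional-maximum}) reduces to
\[ \Lambda^\sigma\phi(x)=-c_\sigma\int_{B_R}\frac{\phi(y)}{\abs{x-y}^{2+\sigma}}\,dy, \]
with no singularity. On $B_R$ we have $\abs{x-y}\geq\abs x/2$, so
\[ \abs{\Lambda^\sigma\phi(x)}\leq C\norm\phi_{L^1}\abs x^{-2-\sigma}, \]
which is integrable on $\{\abs x\geq 2R\}$ because $\sigma>0$. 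On $B_{2R}$, integrability follows from boundedness. Hence $\Lambda^\sigma\phi\in L^1\cap L^\infty$.

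\emph{The inclusion.} Let $h\in L^1\cap L^\infty$. Then $h^*(t)\leq\norm h_{L^\infty}$ for all $t$, and $h^*(t)\leq t^{-1}\norm h_{L^1}$ by Chebyshev. Insert these into \eqref{eq:lorentz-norm} with $s=1$, splitting the integral at $t=1$:
\[ \norm h_{L^{r,1}}\leq\norm h_{L^\infty}\int_0^1t^{1/r-1}\,dt+\norm h_{L^1}\int_1^\infty t^{1/r-2}\,dt. \]
Both integrals converge precisely because $1<r<\infty$, giving $\norm h_{L^{r,1}}\leq C_r(\norm h_{L^1}+\norm h_{L^\infty})$.

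The only step requiring care is justifying the pointwise kernel formula for $\abs x$ large, i.e.\ that the Fourier-multiplier definition of $\Lambda^\sigma$ agrees with the singular-integral one on $C_c^\infty$. This is standard for $0<\sigma<2$. Alternatively, one can bypass it by writing $\Lambda^\sigma\phi=\Lambda^{\sigma-2}(-\Delta\phi)$ and using that $-\Delta\phi$ has zero mean, which gives the same $\abs x^{-2-\sigma}$ decay after one Taylor expansion of the Riesz potential kernel.
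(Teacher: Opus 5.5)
Your proof is correct and is essentially the paper's argument. The decisive step, the $|x|^{-2-\sigma}$ decay away from the support, is identical; you obtain boundedness from $|\xi|^\sigma\widehat\phi\in L^1$ instead of Taylor-expanding the compensated kernel, and you verify $L^1\cap L^\infty\subset L^{r,1}$, which the paper leaves implicit.

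One caution concerns only your closing aside. Zero mean of $-\Delta\phi$ alone gives decay $|x|^{-1-\sigma}$, which is not integrable on $\R^2$ when $\sigma\leq1$. That bypass therefore also needs the vanishing first moments $\int y_i\,\Delta\phi\,dy=0$ to reach $|x|^{-2-\sigma}$.
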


\begin{proof}
For the fractional derivative, use the singular-integral formula
\[ \Lambda^\sigma\phi(x) = c_\sigma\int_{\R^2} \frac{\phi(x)-\phi(y) -\1_{\{\abs{x-y}\leq1\}}\nabla\phi(x)\cdot(x-y)} {\abs{x-y}^{2+\sigma}}\,dy. \]
Taylor's theorem controls the numerator by $C\abs{x-y}^2$ near $y=x$, so the integral is locally bounded because $\sigma<2$. Away from a fixed ball containing $\supp\phi$, one has
\[ \abs{\Lambda^\sigma\phi(x)} \leq C_\phi(1+\abs x)^{-2-\sigma}. \]
Thus $\Lambda^\sigma\phi$ is bounded and integrable.
\end{proof}

\subsection{The estimates for the Biot--Savart law}

Let
\[ \mathcal M f(x) \triangleq \sup_{R>0}\frac1{\abs{B_R}}\int_{B_R(x)}\abs{f(y)}\,dy \]
be the Hardy--Littlewood maximal function. We shall use the standard weak $(1,1)$ estimate
\begin{equation} \label{eq:maximal-weak-one}
\abs{\{\mathcal M f>\lambda\}}\leq\frac{C}{\lambda}\norm{f}_{L^1}.
\end{equation}
For completeness, this follows by choosing, for each point of the superlevel set, a ball on which the average exceeds $\lambda$, applying the Vitali covering lemma to select disjoint balls, and observing that the fivefold dilates cover the superlevel set.

\begin{lemma}\label{lem:maximal-weak-lorentz}
If $1<p<\infty$, then
\[ \norm{\mathcal M f}_{L^{p,\infty}} \leq C_p\norm{f}_{L^{p,\infty}}. \]
\end{lemma}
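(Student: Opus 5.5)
The plan is the classical level-splitting argument, which transfers the weak $(1,1)$ bound \eqref{eq:maximal-weak-one} to weak $L^p$ through the trivial $L^\infty$ bound $\norm{\mathcal M g}_{L^\infty}\leq\norm{g}_{L^\infty}$. Write $F=\norm{f}_{L^{p,\infty}}$ and fix $\lambda>0$. Decompose
\[ f=f_1+f_2,\qquad f_1=f\1_{\{\abs f>\lambda/2\}},\qquad f_2=f\1_{\{\abs f\leq\lambda/2\}}. \]
Since $\mathcal M$ is sublinear and $\mathcal M f_2\leq\lambda/2$ pointwise, we get the inclusion $\{\mathcal M f>\lambda\}\subset\{\mathcal M f_1>\lambda/2\}$.

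Next apply \eqref{eq:maximal-weak-one} to $f_1$. This requires $f_1\in L^1$, which holds because $p>1$. By the layer-cake formula,
\[ \norm{f_1}_{L^1}=\frac{\lambda}{2}\,\mu_f(\lambda/2)+\int_{\lambda/2}^\infty\mu_f(s)\,ds\leq F^p\Bigl(\frac{\lambda}{2}\Bigr)^{1-p}+F^p\int_{\lambda/2}^\infty s^{-p}\,ds=C_pF^p\lambda^{1-p}. \]
The integral converges precisely because $p>1$. Combining this with the inclusion gives
\[ \mu_{\mathcal M f}(\lambda)\leq\frac{2C}{\lambda}\norm{f_1}_{L^1}\leq C_pF^p\lambda^{-p}. \]
Multiplying by $\lambda^p$, taking the $p$-th root and then the supremum over $\lambda$ proves the lemma.

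There is no real obstacle here. The only points that need care are the convergence of the tail integral, which uses $p>1$, and the measurability of $\mathcal M f$, which holds because $\mathcal M f$ is lower semicontinuous. The latter is standard.
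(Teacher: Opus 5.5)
Your proposal is correct and follows essentially the same route as the paper: you split $f$ at height $\lambda/2$, use $\mathcal M f_2\leq\lambda/2$ together with the weak $(1,1)$ bound \eqref{eq:maximal-weak-one} on $f_1$, and control $\norm{f_1}_{L^1}\leq C_pF^p\lambda^{1-p}$ by the layer-cake formula. Your explicit layer-cake computation and your remarks on sublinearity and measurability fill in details that the paper leaves implicit.
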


\begin{proof}
Fix $\lambda>0$ and decompose
\[ f=f_1+f_2,\qquad f_1=f\1_{\{\abs f>\lambda/2\}}. \]
Since $\mathcal M f_2\leq\lambda/2$, the weak $(1,1)$ estimate gives
\[ \abs{\{\mathcal M f>\lambda\}} \leq C\lambda^{-1}\norm{f_1}_{L^1}. \]
If $M=\norm{f}_{L^{p,\infty}}$, then the layer-cake formula yields
\[ \norm{f_1}_{L^1} \leq C_pM^p\lambda^{1-p}. \]
Therefore
$\abs{\{\mathcal M f>\lambda\}}\leq C_pM^p\lambda^{-p}$.
\end{proof}

\begin{proposition}[Biot--Savart estimates in Lorentz spaces]
\label{prop:biot-lorentz}
Let
\[ K(x)=\frac1{2\pi}\frac{x^\perp}{\abs x^2}. \]
If $1<p<2$ and $q=2p/(2-p)$, then
\begin{equation} \label{eq:appendix-HLS}
\norm{K*f}_{L^{q,\infty}}\leq C_p\norm{f}_{L^{p,\infty}}.
\end{equation}
If $1<r<2$ and $s=2r/(2-r)$, then the strong Lorentz refinement also holds:
\[ \norm{K*g}_{L^{s,1}}\leq C_r\norm{g}_{L^{r,1}}. \]
The same bound holds for the adjoint convolution operator.
If $1<p<\infty$, then
\begin{equation} \label{eq:appendix-CZ}
\norm{\nabla(K*f)}_{L^{p,\infty}}\leq C_p\norm{f}_{L^{p,\infty}}.
\end{equation}
The first estimate applied with $p=p_c$ gives \eqref{eq:HLS}, while the second applied with $p=p_c$ gives \eqref{eq:CZ}. The second estimate also justifies the Riesz-transform bound used for the pressure, with $p=1/a$. The strong refinement with $r=q_c'$ and $s=p_c'$ is the adjoint Hardy--Littlewood--Sobolev estimate used in the profile and compactness arguments.
\end{proposition}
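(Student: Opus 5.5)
The plan is to prove the three estimates in \cref{prop:biot-lorentz} separately. The two convolution bounds come from a dyadic splitting of the kernel. The Calder\'on--Zygmund bound comes from real interpolation.

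\emph{Weak HLS bound \eqref{eq:appendix-HLS}.} I would use the pointwise Hedberg argument with the maximal function. Since $\abs{K(z)}\leq C\abs z^{-1}$, I split $K*f$ at a radius $R$. The inner part $\int_{\abs z<R}\abs{f(x-z)}\abs z^{-1}dz$ is at most $CR\,\mathcal Mf(x)$, by summing over dyadic annuli. The outer part is controlled by \cref{prop:lorentz-basic}(i):
\[ \int_{\abs z\geq R}\frac{\abs{f(x-z)}}{\abs z}\,dz\leq \norm{f}_{L^{p,\infty}}\norm{\abs z^{-1}\1_{\{\abs z\geq R\}}}_{L^{p',1}}\leq C_p\norm f_{L^{p,\infty}}R^{1-2/p}, \]
and the last norm is finite precisely because $p<2$. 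Next I optimize in $R$ by choosing $R=(\norm f_{L^{p,\infty}}/\mathcal Mf(x))^{p/2}$. This gives
\[ \abs{K*f(x)}\leq C\,\mathcal Mf(x)^{1-p/2}\norm f_{L^{p,\infty}}^{p/2}. \]
Taking the $L^{q,\infty}$ quasi-norm of $(\mathcal Mf)^{1-p/2}$ with $q(1-p/2)=p$, \cref{lem:maximal-weak-lorentz} finishes the estimate.

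\emph{Strong refinement $L^{r,1}\to L^{s,1}$.} I would use off-diagonal Marcinkiewicz (real) interpolation. The weak bound just proved holds for every exponent in $(1,2)$. I pick $1<r_0<r<r_1<2$, so that the operator maps $L^{r_i,\infty}\supset L^{r_i,1}$ into $L^{s_i,\infty}$ for $i=0,1$. Since $1/s=1/r-1/2$ is affine in $1/r$, the real method $(\cdot,\cdot)_{\theta,1}$ turns these into $L^{r,1}\to L^{s,1}$. The adjoint operator is convolution with $K(-\cdot)=-K$, which satisfies the same bounds. An alternative that avoids interpolation theory is the dual pairing: for $g\in L^{r,1}$ and $h\in L^{s',\infty}$, one has $\int (K*g)h=-\int g(K*h)$, and $s'$ ranges over the same type of exponent. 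However, this only returns $L^{s,1}$ once one knows that $L^{s,1}$ is normed by pairing with $L^{s',\infty}$. So I will rely on interpolation and note the duality only as a consistency check.

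\emph{Calder\'on--Zygmund bound \eqref{eq:appendix-CZ}.} The entries of $\nabla(K*f)$ are $R_iR_jf$ (up to sign and the identity term $\pm\frac12 f\delta_{ij}$ hidden in the principal value). These are bounded on $L^{p_0}$ and $L^{p_1}$ for all $1<p_0<p<p_1<\infty$ by classical Calder\'on--Zygmund theory. Marcinkiewicz interpolation with the second index $\infty$ then gives boundedness on $L^{p,\infty}$. The main obstacle is making sense of the operator on $L^{p,\infty}$ itself, since $L^{p,\infty}$ is not contained in $L^{p_0}+L^{p_1}$ trivially. I would handle this by writing $f=f\1_{\{\abs f>\lambda\}}+f\1_{\{\abs f\leq\lambda\}}$; the two pieces lie in $L^{p_0}$ and $L^{p_1}$ respectively. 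This also gives the $K$-method interpolation directly: the distribution function at height $\lambda$ is bounded by $C\lambda^{-p}\norm f^p_{L^{p,\infty}}$ through the layer-cake estimates, exactly as in \cref{lem:maximal-weak-lorentz}.
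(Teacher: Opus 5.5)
Your proposal is correct and follows the paper's proof throughout. The three steps match: the Hedberg pointwise bound $\abs{K*f}\leq C(\mathcal M f)^{1-p/2}\norm{f}_{L^{p,\infty}}^{p/2}$ obtained from a dyadic inner estimate and Lorentz--H\"older on the tail; off-diagonal real interpolation $(\cdot,\cdot)_{\theta,1}$ between weak HLS bounds at $r_0<r<r_1$; and the truncation $f=f\1_{\{\abs f>\lambda\}}+f\1_{\{\abs f\leq\lambda\}}$ combined with strong $L^{p_0}$, $L^{p_1}$ bounds for the Calder\'on--Zygmund part. The only difference is that the paper rederives those strong bounds for the second-order Riesz transforms from the $L^2$ bound and a weak-$(1,1)$ Calder\'on--Zygmund decomposition, whereas you cite them as classical.
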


\begin{proof}
We first prove \eqref{eq:appendix-HLS}. Since $\abs{K(x)}\leq C\abs{x}^{-1}$, it suffices to estimate
\[ I_1f(x) \triangleq \int_{\R^2}\frac{\abs{f(x-z)}}{\abs z}\,dz. \]
Fix $R>0$. Decomposition into dyadic annuli gives
\begin{align*}
 \int_{\abs z\leq R}\frac{\abs{f(x-z)}}{\abs z}\,dz
 &\leq
 \sum_{j=0}^\infty
 \frac{2^{j+1}}R
 \int_{\abs z\leq2^{-j}R}\abs{f(x-z)}\,dz\\
 &\leq CR\mathcal M f(x).
\end{align*}
For the far field, put
$h_R(z)=\abs z^{-1}\1_{\{\abs z>R\}}$. Since $p'>2$,
\[ h_R^*(t) \leq C\min\{R^{-1},t^{-1/2}\} \]
and hence
\begin{align*}
 \norm{h_R}_{L^{p',1}}
 &\leq
 C R^{-1}\int_0^{R^2}t^{1/p'}\,\frac{dt}{t}
 +C\int_{R^2}^\infty t^{1/p'-1/2}\,\frac{dt}{t}\\
 &\leq C_pR^{1-2/p}.
\end{align*}
The Lorentz--H\"older inequality \eqref{eq:lorentz-holder} therefore gives
\[ \int_{\abs z>R}\frac{\abs{f(x-z)}}{\abs z}\,dz \leq C_p\norm{f}_{L^{p,\infty}}R^{1-2/p}. \]
Writing $M=\norm{f}_{L^{p,\infty}}$ and optimizing in $R$ yields
\begin{equation} \label{eq:hedberg}
I_1f(x)\leq C_pM^{1-p/q}\bigl(\mathcal M f(x)\bigr)^{p/q}.
\end{equation}
Indeed, when $\mathcal M f(x)>0$, choose $R=(M/\mathcal M f(x))^{p/2}$; the remaining cases follow by a limit. By \cref{lem:maximal-weak-lorentz},
\[ \abs{\{I_1f>\lambda\}} \leq C_p\left(\frac{M}{\lambda}\right)^q, \]
which proves \eqref{eq:appendix-HLS}.

To prove the strong Lorentz refinement, fix $1<r<2$ and choose $1<r_0<r<r_1<2$. Put $s_i=2r_i/(2-r_i)$ and choose $\theta\in(0,1)$ so that $1/r=(1-\theta)/r_0+\theta/r_1$; then $1/s=(1-\theta)/s_0+\theta/s_1$, where $s=2r/(2-r)$. Applying \eqref{eq:appendix-HLS} at $r_0$ and $r_1$ and using the real-interpolation identities
\[ (L^{r_0,\infty},L^{r_1,\infty})_{\theta,1}=L^{r,1},\qquad (L^{s_0,\infty},L^{s_1,\infty})_{\theta,1}=L^{s,1} \]
gives the claimed estimate.

We next prove \eqref{eq:appendix-CZ}. Each component of $\nabla K*f$ is a second-order Riesz transform, up to a harmless constant multiple of $f$. Denote one such component operator by $T$. Its Fourier multiplier is bounded, so the operator is bounded on $L^2$. Its principal-value kernel $\mathcal K$ satisfies
\[ \abs{\mathcal K(x)}\leq C\abs x^{-2}, \qquad \abs{\nabla\mathcal K(x)}\leq C\abs x^{-3}, \qquad \int_{\Sph^1}\mathcal K(\theta)\,d\theta=0. \]
We recall the Calder\'on--Zygmund argument in the form needed here.
For $h\in L^1\cap L^2$, perform the Calder\'on--Zygmund decomposition
at height $\lambda$:
\[ h=g+\sum_jb_j, \]
where
\[ \norm{g}_{L^\infty}\leq C\lambda,\qquad \norm{g}_{L^1}\leq\norm{h}_{L^1},\qquad \int b_j=0,\qquad \sum_j\norm{b_j}_{L^1}\leq C\norm{h}_{L^1}, \]
and the $b_j$ are supported on disjoint cubes $Q_j$ satisfying
\[ \sum_j\abs{Q_j}\leq C\lambda^{-1}\norm{h}_{L^1}. \]
The $L^2$ bound gives
\[ \abs{\{\abs{Tg}>\lambda/2\}} \leq C\lambda^{-2}\norm{g}_{L^2}^2 \leq C\lambda^{-1}\norm{h}_{L^1}. \]
Outside a fixed dilation $Q_j^*$ of $Q_j$, cancellation and the gradient bound on the kernel give
\[ \int_{\R^2\setminus Q_j^*}\abs{Tb_j(x)}\,dx \leq C\norm{b_j}_{L^1}. \]
Summing in $j$, using Chebyshev's inequality, and adding the measure of $\bigcup_jQ_j^*$ proves the weak $(1,1)$ estimate
\begin{equation} \label{eq:CZ-weak-one}
\abs{\{\abs{Th}>\lambda\}}\leq C\lambda^{-1}\norm{h}_{L^1}.
\end{equation}

These two endpoint estimates imply strong $L^s$ boundedness without any Lorentz-space input. If $1<s<2$, decompose $h=h\1_{\{\abs h>\lambda\}}+h\1_{\{\abs h\leq\lambda\}}$, apply \eqref{eq:CZ-weak-one} to the first term and the $L^2$ estimate to the second. This gives
\[ \abs{\{\abs{Th}>2\lambda\}} \leq C\lambda^{-1} \int_{\{\abs h>\lambda\}}\abs h + C\lambda^{-2} \int_{\{\abs h\leq\lambda\}}\abs h^2. \]
Using this inequality in the distribution formula
\[ \norm{Th}_{L^s}^s = s\int_0^\infty \lambda^{s-1}\abs{\{\abs{Th}>\lambda\}}\,d\lambda. \]
Tonelli's theorem gives
\[ \norm{Th}_{L^s}^s \leq C_s\norm{h}_{L^s}^s. \]
Boundedness for $2<s<\infty$ follows by applying the result to the adjoint and using duality.

Finally, let $f\in L^{p,\infty}$ and choose
$1<r<p<s<\infty$. For $A>0$, write
\[ f=f^{>A}+f^{\leq A}. \]
The distribution-function bound for $f$ gives
\[ \norm{f^{>A}}_{L^r}^r \leq C_{p,r}M^pA^{r-p}, \qquad \norm{f^{\leq A}}_{L^s}^s \leq C_{p,s}M^pA^{s-p}, \qquad M=\norm{f}_{L^{p,\infty}}. \]
Strong $L^r$ and $L^s$ boundedness and Chebyshev's inequality imply
\[ \abs{\{\abs{Tf}>\lambda\}} \leq C M^p \left(\lambda^{-r}A^{r-p} +\lambda^{-s}A^{s-p}\right). \]
Choosing $A=\lambda$ gives
\[ \abs{\{\abs{Tf}>\lambda\}} \leq C_pM^p\lambda^{-p}, \]
which proves \eqref{eq:appendix-CZ}.
\end{proof}

\section{Galerkin construction of hypodissipative profiles}\label{app:galerkin}

This appendix gives a velocity-based, Galerkin construction of a hypodissipative self-similar profile.

\begin{theorem} \label{thm:galerkin-profile}
Let $0<a<1$ and let $v_0$ be a locally Lipschitz, divergence-free, $(-a)$-homogeneous vector field on $\R^2\setminus\{0\}$. Then there are a divergence-free vector field $W$ and a distribution $Q$ such that
\begin{equation} \label{eq:galerkin-profile}
\Lambda^{1+a}W-\frac1{1+a}\bigl(aW+y\cdot\nabla W\bigr)+W\cdot\nabla W+\nabla Q=0
\end{equation}
in distributions on $\R^2$ and
\[ W-e^{-\Lambda^{1+a}}v_0\in H^{\frac{1+a}{2}}(\R^2). \]
Moreover, define the associated self-similar velocity and pressure by
\[ v(t,x)=t^{-\frac{a}{1+a}}W\left(t^{-\frac1{1+a}}x\right),\qquad q(t,x)=t^{-\frac{2a}{1+a}}Q\left(t^{-\frac1{1+a}}x\right). \]
Then $v$ is a weak solution of
\[ \partial_t v+v\cdot\nabla v+\nabla q+\Lambda^{1+a}v=0,\qquad \diver v=0, \]
with initial datum $v_0$, and $v(t)\to v_0$ strongly in $L^2_{\rm loc}(\R^2)$ as $t\downarrow0$. Here weak solution means that, for every divergence-free $\varphi\in C_c^\infty([0,\infty)\times\R^2;\R^2)$,
\[ \int_0^\infty\!\int_{\R^2}\left(v\cdot\partial_t\varphi+(v\otimes v):\nabla\varphi-v\cdot\Lambda^{1+a}\varphi\right)\,dx\,dt+\int_{\R^2}v_0(x)\cdot\varphi(0,x)\,dx=0. \]
\end{theorem}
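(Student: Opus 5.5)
We follow the Bradshaw--Tsai type scheme: write $W=W_0+w$ with $W_0\triangleq e^{-\Lambda^{1+a}}v_0$, the time-one profile of the free fractional heat flow. The homogeneity of $v_0$ makes $t^{-\frac{a}{1+a}}W_0(t^{-\frac1{1+a}}x)=e^{-t\Lambda^{1+a}}v_0$. Consequently $W_0$ satisfies the linear profile equation
\[ \Lambda^{1+a}W_0-\tfrac1{1+a}(aW_0+y\cdot\nabla W_0)=0. \]
Since $v_0$ is locally Lipschitz and $(-a)$-homogeneous, $W_0$ is smooth and divergence free, with $|W_0(y)|\lesssim\langle y\rangle^{-a}$ and $|\nabla W_0(y)|\lesssim\langle y\rangle^{-1-a}$. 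The key point is that $W_0\cdot\nabla W_0\in H^{-\frac{1+a}{2}}$, because its decay $\langle y\rangle^{-1-2a}$ is square integrable. The perturbation $w\in H^{\frac{1+a}{2}}$ must then solve
\[ \Lambda^{1+a}w-\tfrac1{1+a}(aw+y\cdot\nabla w)+(W_0+w)\cdot\nabla(W_0+w)+\nabla Q=0. \]
A naive $H^{\frac{1+a}{2}}$ estimate for the linear part fails, because $W_0\cdot\nabla W_0$ only decays like $\langle y\rangle^{-1-2a}$ while $W_0$ itself is not in $L^2$. The standard remedy is to cut the far field: replace $W_0$ by $\widetilde W_0=W_0\1$-smoothly outside a large ball $B_{R}$ plus a corrector making it divergence free, and keep the near-field part inside $w$.

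Testing the linear operator against $w$ gives
\[ \int y\cdot\nabla w\cdot w=-\|w\|_{L^2}^2, \]
so $-\frac1{1+a}\int(aw+y\cdot\nabla w)\cdot w=\frac{1-a}{2(1+a)}\|w\|_{L^2}^2$. This is coercive precisely because $a<1$, and together with the dissipation it yields
\[ \|w\|_{\dot H^{\frac{1+a}{2}}}^2+c_a\|w\|_{L^2}^2. \]
The trilinear term $\int(w\cdot\nabla w)\cdot w$ vanishes. The term $\int (w\cdot\nabla\widetilde W_0)\cdot w$ is bounded by $\|\nabla\widetilde W_0\|_{L^\infty}\|w\|_{L^2}^2$. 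Choosing $R$ large makes $\|\nabla\widetilde W_0\|_{L^\infty}\lesssim R^{-1-a}$ small, so this term is absorbed. The forcing terms $\widetilde W_0\cdot\nabla\widetilde W_0$ and the commutator with the cutoff lie in $H^{-\frac{1+a}{2}}$, with bounds depending on $R$ and $v_0$. This gives an a priori bound
\[ \|w\|_{H^{\frac{1+a}{2}}}\le C(a,v_0). \]
This absorption, which must be done without smallness of $v_0$, is the main obstacle; the far-field cutoff is precisely what makes it possible.

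Existence then follows by Galerkin approximation. Take a divergence-free orthonormal basis of $H^{\frac{1+a}{2}}$ consisting of $C_c^\infty$ fields, and solve the finite-dimensional projected problems by the Leray--Schauder or Brouwer fixed point theorem, using the a priori bound above (it holds uniformly for the homotopy $\lambda\in[0,1]$ multiplying the nonlinearity and forcing). Then pass to the limit using compactness of $H^{\frac{1+a}{2}}_{\rm loc}\Subset L^2_{\rm loc}$ on the quadratic term. The pressure is recovered by de Rham's theorem, or as $Q=\sum_{i,j} R_iR_j(W_iW_j)$ up to the harmonic normalization of \cref{lem:pressure-normalization}, which gives \eqref{eq:galerkin-profile}. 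Because $\widetilde W_0-W_0$ is compactly supported and smooth, we still have $W-e^{-\Lambda^{1+a}}v_0\in H^{\frac{1+a}{2}}$.

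For the physical statement, scaling the profile equation gives the weak formulation for $t>0$. For the initial datum we use
\[ v(t)-e^{-t\Lambda^{1+a}}v_0=t^{-\frac a{1+a}}w(t^{-\frac1{1+a}}\cdot), \]
whose $L^2$ norm equals $t^{\frac{1-a}{2(1+a)}}\|w\|_{L^2}\to0$. Since $e^{-t\Lambda^{1+a}}v_0\to v_0$ in $L^2_{\rm loc}$ (as $v_0\in L^2_{\rm loc}$ when $a<1$), we get $v(t)\to v_0$ strongly in $L^2_{\rm loc}$. Local integrability of $v\otimes v$ near $t=0$ follows from the same scaling, and sending the lower time limit to zero yields the weak formulation with the initial datum.
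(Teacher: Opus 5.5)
Your proposal is correct and is essentially the paper's argument. You cut off the far field of $e^{-\Lambda^{1+a}}v_0$ (the paper does this through the stream function, $V_1=\nabla^\perp(\chi_R\psi_0)$) so that $\|\nabla V_1\|_{L^\infty}\lesssim R^{-1-a}$ is absorbed by the coercive dilation term; you then solve finite-dimensional Galerkin problems by Brouwer, pass to the limit with Rellich compactness, and read off the trace from the $L^2$ scaling of the $H^{\frac{1+a}{2}}$ correction.

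The remaining points are minor:
\begin{itemize}
\item The coercivity constant is $\frac{1-a}{1+a}$, not $\frac{1-a}{2(1+a)}$.
\item The rescaled correction has $L^2$ norm $t^{\frac{1-a}{1+a}}\|w\|_{L^2}$, not $t^{\frac{1-a}{2(1+a)}}\|w\|_{L^2}$. Both are harmless factor-of-two slips.
\item Choose your Galerkin family so that its span approximates every divergence-free $C_c^\infty$ field in $C^\infty$ with supports in a fixed compact set, as the paper does. Density in $H^{\frac{1+a}{2}}$ alone does not control the $y\cdot\nabla\phi$ term when you extend the limit identity from the span to all test fields.
\item The nonlocal term $\int v\cdot\Lambda^{1+a}\varphi$ near $t=0$ is not covered by local integrability. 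You need the global bounds $v_0\in L^{2/a,\infty}$ and $w\in L^2$.
\end{itemize}
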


\begin{proof}
Let $\Pcal$ denote the Leray projection and set
\[ \mathcal L\triangleq\Lambda^{1+a}-\frac{a}{1+a}-\frac1{1+a}y\cdot\nabla,\qquad V_0\triangleq e^{-\Lambda^{1+a}}v_0. \]
The projected profile equation is
\[ \mathcal LW+\Pcal\diver(W\otimes W)=0. \]
\smallskip\noindent\emph{Step 1. The linear profile.}\par
The homogeneity of $v_0$ and the fractional heat-kernel bounds give
\begin{equation} \label{eq:galerkin-V0-bounds}
\abs{V_0(y)}\leq C(1+\abs y)^{-a},\qquad \abs{\nabla^kV_0(y)}\leq C_k(1+\abs y)^{-a-1}\quad(k\geq1).
\end{equation}
For completeness, put $s=1+a$ and let $p_t$ be the kernel of $e^{-t\Lambda^s}$. The standard fractional heat-kernel bounds give
\[ \abs{\nabla^jp_t(x)}\leq C_jt\bigl(t^{1/s}+\abs x\bigr)^{-2-s-j},\qquad \norm{\nabla^jp_t}_{L^1}\leq C_jt^{-j/s}. \]
Choose a radial $\chi\in C_c^\infty(\{1/4<\abs x<4\})$ that equals one near $\Sph^1$, and write $v_0=f+g$ with $f=\chi v_0$. Since $f\in W_c^{1,\infty}$ and $g$ is supported a positive distance from $\Sph^1$, for $0<t\leq1$ and $\theta\in\Sph^1$ one has
\[ \abs{p_t*f(\theta)}\leq C,\qquad \abs{\nabla^k(p_t*f)(\theta)}\leq\norm{\nabla^{k-1}p_t}_{L^1}\norm{\nabla f}_{L^\infty}\leq C_kt^{-(k-1)/s}\quad(k\geq1). \]
On the other hand, the pointwise kernel bound and $\abs{g(y)}\leq C\abs y^{-a}$ give
\[ \abs{\nabla^k(p_t*g)(\theta)}\leq C_kt\int_{\supp g}\abs{\theta-y}^{-2-s-k}\abs y^{-a}\,dy\leq C_k\quad(k\geq0). \]
The integral is finite both near the origin, because $a<2$, and at infinity, while the separation of $\supp g$ from $\Sph^1$ removes the kernel singularity. Consequently,
\[ \abs{e^{-t\Lambda^s}v_0(\theta)}\leq C,\qquad \abs{\nabla^ke^{-t\Lambda^s}v_0(\theta)}\leq C_kt^{-(k-1)/s}\quad(k\geq1). \]
By homogeneity,
\[ V_0(Rx)=R^{-a}\bigl(e^{-R^{-s}\Lambda^s}v_0\bigr)(x), \]
so for $R\geq1$ and $\theta\in\Sph^1$,
\[ \abs{V_0(R\theta)}\leq CR^{-a},\qquad \abs{\nabla^kV_0(R\theta)}\leq C_kR^{-a-k}R^{k-1}=C_kR^{-a-1}. \]
On bounded sets all derivatives of $V_0=p_1*v_0$ are bounded, which proves \eqref{eq:galerkin-V0-bounds}. Differentiating
\[ e^{-t\Lambda^{1+a}}v_0(x)=t^{-\frac{a}{1+a}}V_0\left(t^{-\frac1{1+a}}x\right) \]
at $t=1$ yields $\mathcal LV_0=0$.

\smallskip\noindent\emph{Step 2. A small far-field background.}\par
Since $V_0$ is smooth and divergence free, write $V_0=\nabla^\perp\psi_0$ with $\psi_0(0)=0$. Estimate \eqref{eq:galerkin-V0-bounds} gives $\abs{\psi_0(y)}\leq C(1+\abs y)^{1-a}$. Choose a radial cutoff $\chi_R$ that vanishes on $B_R$ and equals one outside $B_{2R}$, and define
\[ V_1\triangleq\nabla^\perp(\chi_R\psi_0),\qquad V_2\triangleq V_1-V_0. \]
Then $V_1$ is divergence free, $V_2\in C_c^\infty(\R^2)$, and the product rule together with \eqref{eq:galerkin-V0-bounds} gives
\begin{equation} \label{eq:galerkin-small-background}
\norm{\nabla V_1}_{L^\infty}\leq CR^{-1-a}.
\end{equation}
Seek $W=V_1+Z$. Since $\mathcal LV_0=0$ and $V_2$ is smooth and compactly supported,
\[ F\triangleq-\mathcal LV_1-\Pcal\diver(V_1\otimes V_1)=-\mathcal LV_2-\Pcal\diver(V_1\otimes V_1)\in L^2(\R^2). \]
Here $\Lambda^{1+a}V_2(y)=O(\abs y^{-3-a})$, while $V_1\cdot\nabla V_1=O(\abs y^{-1-2a})$; both are square integrable because $a>0$. The equation for $Z$ is
\begin{equation} \label{eq:galerkin-Z-equation}
\mathcal LZ+\Pcal\diver\bigl(Z\otimes Z+V_1\otimes Z+Z\otimes V_1\bigr)=F.
\end{equation}
\smallskip\noindent\emph{Step 3. Galerkin approximation.}\par
Choose a countable family $\{\phi_j\}_{j\geq1}\subset C_c^\infty(\R^2;\R^2)$ of divergence-free fields whose span is dense in the divergence-free subspace of $H^{\frac{1+a}{2}}$ and has the following local approximation property: every divergence-free $\phi\in C_c^\infty(\R^2;\R^2)$ is the $C^\infty$ limit of a sequence in this span whose supports lie in one fixed compact set. Such a family is obtained by taking, for every positive rational radius, a countable $C^\infty$-dense family of compactly supported scalar functions in the corresponding ball, applying $\nabla^\perp$, and enumerating the union. Every compactly supported divergence-free test field has a compactly supported smooth stream function, so the stated approximation property follows.

Let $X_N=\operatorname{span}\{\phi_1,\ldots,\phi_N\}$, let $P_N$ be the $L^2$-orthogonal projection onto $X_N$, and define the continuous Galerkin map $\mathcal G_N:X_N\to X_N$ by
\[ \mathcal G_N(Z)=P_N\mathcal LZ+P_N\Pcal\diver\bigl(Z\otimes Z+V_1\otimes Z+Z\otimes V_1\bigr)-P_NF. \]
For $Z\in X_N$, integration by parts gives
\[ \pair{\mathcal LZ}{Z}=\norm{\Lambda^{\frac{1+a}{2}}Z}_{L^2}^2+\frac{1-a}{1+a}\norm{Z}_{L^2}^2. \]
The divergence-free identities give
\[ \int_{\R^2}\bigl((Z\cdot\nabla)Z+(V_1\cdot\nabla)Z\bigr)\cdot Z\,dy=0,\qquad \abs{\int_{\R^2}(Z\cdot\nabla V_1)\cdot Z\,dy}\leq\norm{\nabla V_1}_{L^\infty}\norm{Z}_{L^2}^2. \]
Choose $R$ in \eqref{eq:galerkin-small-background} so large that $\norm{\nabla V_1}_{L^\infty}<(1-a)/(2(1+a))$. It follows that
\[ \pair{\mathcal G_N(Z)}{Z}\geq\norm{\Lambda^{\frac{1+a}{2}}Z}_{L^2}^2+\frac{1-a}{2(1+a)}\norm{Z}_{L^2}^2-\norm F_{L^2}\norm Z_{L^2}. \]
Choose $\rho>2(1+a)\norm F_{L^2}/(1-a)$. Consequently, $\pair{\mathcal G_N(Z)}{Z}>0$ on the $L^2$-sphere $\{Z\in X_N:\norm Z_{L^2}=\rho\}$. Brouwer's lemma gives a zero $Z_N$ of $\mathcal G_N$ inside this sphere. Evaluating the preceding inequality at $Z_N$ yields
\begin{equation} \label{eq:galerkin-uniform-bound}
\sup_N\norm{Z_N}_{H^{\frac{1+a}{2}}}<\infty.
\end{equation}
\smallskip\noindent\emph{Step 4. Passage to the limit.}\par
After passing to a subsequence, $Z_N\rightharpoonup Z$ in $H^{\frac{1+a}{2}}(\R^2)$. Since $\frac{1+a}{2}>\frac{1}{2}$, Rellich compactness and the local Sobolev embedding give $Z_N\to Z$ strongly in $L^4_{\rm loc}(\R^2)$. For each fixed $j$, the Galerkin identity can therefore be passed to the limit against $\phi_j$: the strong local convergence handles the quadratic terms, the linear fractional term passes by weak convergence in $H^{\frac{1+a}{2}}$, and the dilation term is handled by moving its derivative onto $\phi_j$. The identity holds on the span of the $\phi_j$. The fixed-support $C^\infty$ approximation property then extends it to every divergence-free $\phi\in C_c^\infty(\R^2;\R^2)$. Thus \eqref{eq:galerkin-Z-equation} holds in distributions against divergence-free tests. The complementary Helmholtz projection recovers a distributional pressure $Q$, and $W=V_1+Z$ solves \eqref{eq:galerkin-profile}. Finally,
\[ W-V_0=V_2+Z\in H^{\frac{1+a}{2}}(\R^2). \]
\smallskip\noindent\emph{Step 5. The initial trace.}\par
Set $\mathsf{U}=W-V_0$. Scaling gives
\[ v(t,x)=e^{-t\Lambda^{1+a}}v_0(x)+t^{-\frac{a}{1+a}}\mathsf{U}\left(t^{-\frac1{1+a}}x\right) \]
and
\[ \norm{t^{-\frac{a}{1+a}}\mathsf{U}(t^{-\frac1{1+a}}\,\cdot\,)}_{L^2}=t^{\frac{1-a}{1+a}}\norm{\mathsf{U}}_{L^2}\longrightarrow0\qquad\text{as }t\downarrow0. \]
To treat the linear term on a compact set $K$, choose $\chi\in C_c^\infty$ equal to one near $K\cup\{0\}$. Since $\chi v_0\in L^2(\R^2)$, strong continuity of the fractional heat semigroup gives $e^{-t\Lambda^{1+a}}(\chi v_0)\to\chi v_0$ in $L^2$. The heat-kernel tail, the separation of $K$ from $\supp(1-\chi)$, and boundedness of $v_0$ on $\supp(1-\chi)$ give
\[ \sup_{x\in K}\abs{e^{-t\Lambda^{1+a}}\bigl((1-\chi)v_0\bigr)(x)}\leq C_Kt,\qquad 0<t\leq1. \]
Hence $v(t)\to v_0$ in $L^2(K)$. The same estimates give $v\in L^2_{\rm loc}([0,\infty)\times\R^2)$ and $v\otimes v\in L^1_{\rm loc}$. It remains only to justify the nonlocal dissipative term at $t=0$. Homogeneity gives $v_0\in L^{2/a,\infty}$, and the fractional heat semigroup is uniformly bounded on this Lorentz space. Lorentz duality and the global $L^2$ estimate for the correction therefore give, for every compactly supported smooth spacetime test field $\varphi$,
\[ \abs{\pair{e^{-t\Lambda^{1+a}}v_0}{\Lambda^{1+a}\varphi(t)}}\leq C_\varphi\norm{v_0}_{L^{2/a,\infty}},\qquad \abs{\pair{t^{-\frac{a}{1+a}}\mathsf{U}(t^{-\frac1{1+a}}\,\cdot\,)}{\Lambda^{1+a}\varphi(t)}}\leq C_\varphi t^{\frac{1-a}{1+a}}\norm{\mathsf{U}}_{L^2}. \]
Indeed, $\Lambda^{1+a}\varphi(t)\in L^{2/(2-a),1}\cap L^2$ uniformly in $t$. Both bounds are integrable at the origin. On every interval $[\delta,T]$, the self-similar change of variables carries the profile equation to the physical hypodissipative equation. Testing there against a divergence-free spacetime field and sending $\delta\downarrow0$, using the strong local trace for the boundary term and the preceding estimates for the dissipative term, proves the stated weak formulation.
\end{proof}

\begin{remark}
In the range \(\frac{1}{3} < a < 1\), the profile \(W\) constructed via the Galerkin approximation is smooth, with regularity established by bootstrapping through Duhamel's formula.
\end{remark}

\end{document}